\newtheorem{thm}{Theorem}[section]
\newtheorem{cor}[thm]{Corollary}
\newtheorem{lem}[thm]{Lemma}
\newtheorem{prop}[thm]{Proposition}
\theoremstyle{definition}
\theoremstyle{remark}
\newtheorem{rem}[thm]{Remark}
\numberwithin{equation}{section}
\theoremstyle{remark}
\newtheorem{exam}[thm]{Example}
\newcommand{\mbb}{\mathbb}
\newcommand{\ra}{\rightarrow}
\newcommand{\z}{\zeta}
\newcommand{\pa}{\partial}
\newcommand{\ov}{\overline}
\newcommand{\sm}{\setminus}
\newcommand{\no}{\noindent}
\newcommand{\al}{\alpha}
\newcommand{\Om}{\Omega}
\newcommand{\cal}{\mathcal}
\newcommand{\ti}{\tilde}
\newcommand{\de}{\delta}
\newcommand{\om}{\omega}
\newcommand{\ga}{\gamma}
\newcommand{\be}{\beta}
\begin{document}
\title{On the automorphism group of certain Short $\mbb C^2$'s}
\keywords{automorphism group, Short $\mbb C^2$}
\subjclass{Primary: 32M18  ; Secondary: 32Q02, 32T05,  }
\author{Sayani Bera, Ratna Pal and Kaushal Verma}

\address{SB: Indian Association for the Cultivation of Science, 2A-2B Raja S. C. Mullick Road, Kolkata 700 032, India}
\email{sayanibera2016@gmail.com, mcssb2@iacs.res.in}

\address{RP:  Indian Institute of Science Education and Research Mohali, Knowledge City, Sector -81, Mohali, Punjab.-140306, India}
\email{ratna.math@gmail.com, ratnapal@iisermohali.ac.in}

\address{KV: Department of Mathematics, Indian Institute of Science, Bangalore 560 012, India}
\email{kverma@iisc.ac.in}

\begin{abstract} 
 For a H\'enon map of the form $H(x, y) = (y, p(y) - ax)$, where $p$ is a polynomial of degree at least two and $a \not= 0$, it is known that the sub-level sets of the Green's function $G^+_H$ associated with $H$ are Short $\mbb C^2$'s. For a given $c > 0$, we study the holomorphic automorphism group of such a Short $\mbb C^2$, namely $\Om_c = \{ G^+_H < c \}$. The unbounded domain $\Om_c \subset \mbb C^2$ is known to have smooth real analytic Levi-flat boundary. Despite the fact that $\Om_c$ admits an exhaustion by biholomorphic images of the unit ball, it turns out that its automorphism group, ${\rm Aut}(\Om_c)$ cannot be too large. On the other hand, examples are provided to show that these automorphism groups are non-trivial in general. We also obtain necessary and sufficient conditions for such a pair of Short $\mbb C^2$'s to be biholomorphic. 
\end{abstract}  

\maketitle 

\section{Introduction}

\no Examples of domains $\Om \subset \mbb C^2$ that can be written as an increasing sequence of biholomorphic images of the unit ball $\mbb B^2 \subset \mbb C^2$, and at the same time admit a non-constant bounded plurisubharmonic function and on which the infinitesimal Kobayashi metric vanishes identically were first constructed by Fornaess in \cite{F} and christened {\it Short}\;$\mbb C^2$'s.

\medskip

There are two principal methods for constructing such domains which can be summarized as:

\medskip

\noindent (i) Fix an integer $d \ge 2$ and a sequence $1 > a_1 \ge a_2 \ge \ldots \ge \lim a_n = a_{\infty} \ge 0$. Let $\{F_n\}$, $n \ge 1$, be a sequence of automorphisms of $\mbb C^2$ of the form
    \[
    F_n(x, y) = (\eta_n y + x^d, \eta_n x)
    \]
    where $\eta_n = a^{d^n}_n$. For $i = 1, 2$, let $\pi_i : \mbb C^2 \ra \mbb C$ be the standard projection on the $i$-th coordinate. Define
    \[
    \phi_n(x,y) = \max \{ \vert \pi_i \circ F_n \circ F_{n-1} \circ \cdots \circ F_1(x,y) \vert, \eta_n  : i = 1, 2 \} 
    \]
    and set
    \[
    \phi(x,y) = \lim_{n \ra \infty} \frac{1}{d^n} \log \phi_n(x,y)
    \]
    which can be shown to be plurisubharmonic on $\mbb C^2$. Then, for every $c > \log a_{\infty}$, the $c$ sub-level set of $\phi$ given by $\{\phi < c\}$ is a {\it Short}\;$\mbb C^2$. Note that the $0$ sub-level set can be identified precisely with the basin of attraction of the given sequence $\{F_n\}$.

\medskip

\noindent (ii) Recall that for a H\'{e}non map of the form $H(x,y) = (y, p(y) - ax)$, where
$a \not= 0$ and $p$ is a polynomial of degree $d\geq 2$, the Green's function
    \[
    G^+_H(x,y) = \lim_{n \ra \infty} \frac{1}{d^n} \log^+ \Vert H^n(x,y) \Vert
    \]
    is a continuous, non-negative plurisubharmonic function on $\mbb C^2$ that describes the asymptotic growth of orbits. Then, for every $c > 0$, the $c$ sub-level set 
    $\{G_H^+ < c\}$ is a {\it Short}\;$\mbb C^2$.

\medskip

Much like Fatou--Bieberbach domains, {\it Short}\;$\mbb C^2$'s come in a variety of shapes and sizes, and possess a number of intriguing properties as can be seen from the examples in \cite{ATP}, \cite{B}, \cite{BPV1}, \cite{BPV:Rigidity}, \cite{T} and \cite{ForP}. The purpose of this paper is to study the holomorphic automorphism group of {\it Short}\;$\mbb C^2$'s that arise in (ii) above. Apart from its intrinsic interest, part of the motivation for doing so comes from a more general question namely, understanding the dependence of the automorphism group ${\rm Aut}(D)$ on the domain $D \subset \mbb C^n$. Suitably interpreted, the assignment $D \mapsto {\rm Aut}(D)$ is upper semi-continuous within the category of {\it bounded} domains. The circle of ideas contained in this meta-theorem of sorts are due to Greene--Kim--Krantz (see \cite{GK, GKK} for example, and \cite{GKKS}, \cite{Kr} for recent work in this direction) and Fridman--Ma--Poletsky (see \cite{FM, FMP, FP}). For {\it unbounded} domains, one can begin by considering a simple example. Take an exhaustion of $\mbb C^n$, $n \ge 2$, by an increasing union of concentric balls $B(0, r_j) \subset \mbb C^n$ with $r_j \ra \infty$. Then ${\rm Aut}(B(0, r_j)) \simeq SU(n,1)$ for all $j$, but ${\rm Aut}(\mbb C^n)$ is infinite dimensional. Thus, there is a large upward jump in the automorphism group of the limiting domain as compared to the domains approximating it. The same phenomenon is seen in the case of a Fatou--Bieberbach domain $D \subset \mbb C^n$. 
Indeed, such a domain $D$ is biholomorphic to $\mbb C^n$ and admits an exhaustion by domains that are biholomorphic to the unit ball $\mbb B^n \subset \mbb C^n$. 

\medskip

By their very construction, {\it Short}\;$\mbb C^2$'s also admit an exhaustion by domains that are biholomorphic to $\mbb B^2$. It is therefore of interest to understand their automorphism group -- an aspect of theirs that seems not to have been explored so far. It turns out that the automorphism group of a {\it Short}\;$\mbb C^2$ that arises by considering a single H\'{e}non map as in (ii) is surprisingly small. This will provide an example of the limitations of the upper-semicontinuity phenomenon for unbounded domains.  However,  it must be emphasized that the automorphism group need not be trivial.

\begin{exam}
Let $H(x,y)=(y, y^2-ax)$ where $a \neq 0$ and $L_\omega=(\omega x, \omega^2 y)$ where $\omega$ is  the primitive cube root of identity, i.e., $\omega^3=1$. Note that 
\[
L_{\omega} \circ H(x,y)=(\om y, \omega^2 y^2-\omega^2 a x)=H \circ L_{\omega^2} (x,y)
\]  
for every $(x,y) \in \mathbb{C}^2$. Since $L_{\omega} \circ L_{\omega^2} = L_{\omega^2} \circ L_{\omega} =$ identity, it follows that $H = L_{\omega}\circ H \circ L_{\omega}$ and hence $H^n = L_{\omega^2} \circ H^n \circ L_{\omega}$ or $L_{\omega} \circ H^n \circ L_{\omega}$ if $n$ is even or odd respectively. Thus, $G^+_H$, the Green's function of $H$, satisfies $G_H^+=G_H^+\circ L_\omega$. Further, for every $c>0, L_\omega$ is an automorphism of the $c$-sublevel set of $G_H^+$, i.e., $\{G_H^+<c\}$.
\end{exam}

To make all this precise, let us begin by recalling some relevant facts about the dynamics of a H\'{e}non map from \cite{BS1}, \cite{HOV} and fixing notation.

\medskip

Consider a H\'{e}non map of the form $H(x, y) = (y, p(y) - ax)$ where $a \not= 0$ and $p$ is a monic polynomial of degree $d \ge 2$. By an affine conjugacy, the coefficient of $y^{d-1}$ in $p$ can be made zero, and as a result, we will henceforth assume that 
\begin{equation}\label{1.1}
   H(x, y) = (y, p(y) - ax) 
\end{equation}
where $p(y)  = y^d + q(y)$ with $q(y) = a_{d-2}y^{d-2} + \cdots + a_0$. It is useful to understand H\'{e}non maps in terms of their behaviour at infinity, and to do this, consider the filtration of $\mbb C^2$ given by 
\[
V_R = \left\{ \vert x \vert, \vert y \vert \le R \right\}, V^{+}_R = \left\{\vert y \vert \ge \max\{\vert x \vert, R\} \right\},\; V^-_R = \left\{ \vert x \vert \ge \max\{\vert y \vert, R\} \right\}. 
\]
There exists a large $R > 1$ for which $H(V^+_R) \subset V^+_R$ and the forward orbit of each point in $V^+_R$ escapes to infinity, $H(V_R \cup V^+_R) \subset V_R \cup V^+_R$, and the forward orbit of points in $V^-_R $, with unbounded (forward) orbit, is eventually contained $V_R \cup V^+_R$. Bounded forward orbits are therefore trapped in $V_R \cup V_R^-$, and the unbounded ones approach infinity while remaining in $V^+_R$. Let
\[
K^{\pm} = \{(x, y) \in \mbb C^2: \{H^{\pm n}(x,y), n \ge 0\} \;\text{is bounded} \}
\]
and
\[
K = K^{+} \cap K^{-}, \;\;J^{\pm} = \pa K^{\pm}, \;\;J = J^+ \cap J^-, \;\;U^\pm = \mbb C^2 \sm K^{\pm}.
\]
Let $ (x_n, y_n)=H^n(x,y)$ where $H^n$ denote the $n$-th iteration of $H$. Since $\deg q \le d-2$, (\ref{1.1}) shows that the degree of $x_n$ is strictly dominated by the degree of $y_n$ for all $n \ge 1$ and that $y_{n+1} \sim y^d_n$. Hence, the limit (which defines the Green's function)
\[
G_H^+(x, y) = \lim_{n \ra \infty} \frac{1}{d^n} \log^+ \Vert H^n(x,y) \Vert = \lim_{n \ra \infty} \frac{1}{d^n} \log^+ \vert y_n \vert
\]
converges uniformly on compact subsets of $\mbb C^2$ and defines a continuous, non-negative plurisubharmonic function on $\mbb C^2$. Further, $G_H^+$ is pluriharmonic on $U^+$, $\{G^+_H = 0\} = K^+$, and the definition shows that $G_H^+ \circ H = d \cdot G_H^+$. Since $y_{n+1} = y^d_{n}\left(1 + ({q(x_n, y_n)}/{y^d_n}) \right) \sim y^d_n$ for $(x, y) \in V^+_R$, the B\"{o}ttcher coordinate 
\[
\phi^+(x, y) = y \lim_{n \ra \infty} \prod_{j=0}^{n-1} \left( 1 + \frac{q(x_j, y_j)}{y^d_j}  \right)^{1/d^{j+1}}
\]
is a well defined holomorphic function on $V^+_R$ (the limit is uniform on compact subsets of $V^+_R$) that satisfies $\phi^+ \circ H = (\phi^+)^d$. In addition, $G_H^+ = \log \vert \phi^+ \vert$ on $V^+_R$. Furthermore, $G_H^+ : U^+ \ra \mbb R_+$ is a pluriharmonic submersion whose level sets are smooth $3$-manifolds that admit a foliation by copies of 
$\mbb C$. Each such leaf, which is a copy of $\mbb C$, is dense in the level set which contains it. In particular, all this applies to the boundary of the {\it Short}\;$\mbb C^2$
\begin{equation}
 \Om_c = \{G_H^+ < c\}.   
\end{equation}
It is useful to recall that $\phi^+$ extends as a multi-valued map to $U^+$. To see this, let $\ga \subset U^+$ be a path which starts at a point of $V^+_R$. Let $G^+_{H\ast}$ be a pluriharmonic conjugate for $G_H^+$ in a neighbourhood of the initial point of $\ga$ such that $\phi^+ = \exp(G_H^+ + i G^+_{H\ast})$. By continuing $G^+_{H\ast}$ along $\ga$ in a pluriharmonic manner, it follows that $\phi^+$ admits analytic continuation along $\ga$, and hence to all of $U^+$. Thus, $\phi^+ : U^+ \ra \mbb C \sm \overline{\mbb D}$ is a multi-valued holomorphic map; here $\mbb D \subset \mbb C$ is the unit disc. Let $\ti U^+$ be the Riemann domain over $U^+$ on which $\phi^+$ lifts as a single-valued holomorphic map $\ti \phi^+ : \ti U^+ \ra \mbb C \sm \overline{\mbb D}$. The fibres of $\ti \phi^+$ are precisely the leaves of the foliation of the level sets of $G_H^+$, and are hence equivalent to $\mbb C$. Furthermore, there is a biholomorphic map
\[
\tau : \mbb C \times (\mbb C \sm \overline{\mbb D})  \ra \ti U^+
\]
such that $\ti \phi^+ \circ \tau( t,\zeta) = \zeta$. In other words, $\tau$ straightens out the fibres of $\ti \phi^+$ globally, and in these coordinates, the fibres are described by $\zeta = \text{constant}$. The map $H : U^+ \ra U^+$ lifts to $\ti H :  \mbb C \times (\mbb C \sm \overline{\mbb D})   \ra  \mbb C \times (\mbb C \sm \overline{\mbb D}) $. For the H\'{e}non map $H(x, y) = (y, y^2 + a_0 - ax)$, an explicit description of the lift as
\begin{equation}
\ti H(z,\zeta) = \left( (a/2)z + \z^3 - (a_0/2) \z, \z^2 \right)  
\end{equation}
can be found in \cite{HOV}. Taking this further, Bousch \cite{Bo} showed that every automorphism of $U^+$ that induces the identity on $\pi_1(U^+) \simeq \mathbb Z\left[1/2\right]$ lifts to an automorphism of $\mbb C \times (\mbb C \sm \overline{\mbb D})$, and since these can be written down explicitly, a detailed description of \text{Aut}$(U^+)$ follows. Indeed, \text{Aut}$(U^+)$ is isomorphic to the semi-direct products $\mbb C \rtimes \mbb Z$ or $\mbb C \rtimes (\mbb Z/3 \mbb Z) \rtimes \mbb Z$ according as $a_0 \not= 0$ or $a_0 = 0$ respectively. In this context, consider the {\it punctured} {\it Short}\;$\mbb C^2$
\[
\Om'_c = \Om_c \sm K^+ = \{0 < G_H^+ < c\} \subset \Om_c
\]
which can be shown to be connected for every $c > 0$, and for every $f \in {\rm Aut}(\Om_c)$, its restriction $f : \Om'_c \ra \Om_c$. 

\begin{prop}\label{Prop 1}
	Let $c>0$ and $f \in  {\rm Aut}(\Omega_{c})$. Then $f(K^+)=K^+$ and $f(\Omega_b)=\Omega_b$ for every $0<b<c$. In particular, the restriction of $f$ to $\Om'_c$ is in fact an element of ${\rm Aut}(\Om'_c)$
	\end{prop}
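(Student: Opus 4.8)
The plan is to show first that $f$ preserves the structure at infinity, namely that $f(K^+) = K^+$, and then to leverage the functional equation $G_H^+ \circ H = d \cdot G_H^+$ together with the maximum principle to conclude $f(\Omega_b) = \Omega_b$ for all $0 < b < c$. The starting observation is that $G_H^+$ is a non-negative plurisubharmonic exhaustion-type function on $\Omega_c$ in the sense that $\{G_H^+ < b\} \Subset \Omega_c$ for $b < c$ is relatively compact modulo the fibre directions; more usefully, $K^+ = \{G_H^+ = 0\}$ is precisely the set of points in $\Omega_c$ admitting arbitrarily large relatively compact neighbourhoods that are biholomorphic images of balls, or — more robustly — $K^+ \cap \Omega_c$ is characterized intrinsically. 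I would isolate such an intrinsic characterization: for instance, $K^+$ is the set of points $z \in \Omega_c$ through which there is no proper holomorphic embedding of $\mathbb{C}$ into $\Omega_c$ on which $G_H^+$ is nonconstant, or alternatively use that the Levi foliation of each level hypersurface $\{G_H^+ = b\}$, $0 < b < c$, is by copies of $\mathbb{C}$ dense in that hypersurface, a feature that distinguishes $\Omega_c \setminus K^+$ from $K^+$.

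The cleanest route, and the one I would try first, is to consider the bounded plurisubharmonic function $u = G_H^+$ on $\Omega_c$ and examine $u \circ f$. Since $f$ is a biholomorphism of $\Omega_c$, the function $u \circ f$ is again plurisubharmonic on $\Omega_c$, bounded above by $c$, non-negative, and pluriharmonic on $f^{-1}(U^+) \cap \Omega_c$. The key point is that $u$ is, up to scaling, the \emph{unique} such function: $u$ is the "relative extremal"-type function or the Green's function of $\Omega_c$ with its Levi-flat boundary, pinned down by the boundary behaviour (it extends continuously to $\partial \Omega_c$ with boundary value $c$) and by having $K^+$ as its zero set (the unique compact "core"). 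More concretely, $\Omega_c$ retracts onto $K^+$ along the gradient flow / Böttcher directions, and $K^+$ is the unique maximal compact $H^{\pm}$-totally-invariant subset; a biholomorphism must send the "non-hyperbolic core" to itself. I would make this precise using the fact from the excerpt that $\phi^+$ lifts on $\tilde U^+ \cong \mathbb{C} \times (\mathbb{C} \setminus \overline{\mathbb{D}})$ with fibres $= \mathbb{C}$, so the punctured region $\Omega_c' = \{0 < G_H^+ < c\}$ is foliated by copies of $\mathbb{C}$ while no such foliation extends across $K^+$; hence $f$ must carry $\Omega_c'$ into itself or its image must avoid $K^+$ appropriately, forcing $f(K^+) = K^+$ by a connectedness/continuity argument (using that $\Omega_c'$ is connected, as stated).

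Once $f(K^+) = K^+$ is established, the function $v = u \circ f - u = G_H^+ \circ f - G_H^+$ is pluriharmonic on the connected open set $\Omega_c'$ (both terms are pluriharmonic there since $f$ preserves $K^+$ and hence $U^+ \cap \Omega_c$), it is bounded, and it extends continuously to $\partial \Omega_c$ where it vanishes (both $u$ and $u \circ f$ tend to $c$), and it vanishes on $\partial K^+ = J^+ \cap \Omega_c$ as well (both tend to $0$). Thus $v$ is a bounded pluriharmonic function on $\Omega_c'$ with zero boundary values on all of $\partial \Omega_c'$; by the maximum principle $v \equiv 0$, i.e. $G_H^+ \circ f = G_H^+$ on $\Omega_c'$ and hence on $\Omega_c$ by continuity. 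Therefore $f$ preserves every level set $\{G_H^+ = b\}$ and every sublevel set $\Omega_b$, which is the assertion; the final sentence about $f|_{\Omega_c'} \in {\rm Aut}(\Omega_c')$ is then immediate since $f$ fixes both $K^+$ and $\Omega_c$ setwise.

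The main obstacle I anticipate is the rigorous proof that $f(K^+) = K^+$ — i.e., producing a genuinely intrinsic (biholomorphism-invariant) characterization of the compact set $K^+$ inside $\Omega_c$. One must rule out, for example, that $f$ shuffles part of $K^+$ out into $\Omega_c'$; the natural tools are the density of the $\mathbb{C}$-leaves in the level hypersurfaces (a non-ball-like feature absent near $K^+$), the vanishing of the Kobayashi metric, and a normal-families/exhaustion argument using that $\Omega_c$ is exhausted by balls $B_j$ with $K^+ \subset B_j$. I would handle this by showing that points of $\Omega_c'$ lie on properly embedded copies of $\mathbb{C}$ (the Böttcher leaves) that are \emph{closed} in $\Omega_c$, whereas no point of $K^+$ does, and that this property is biholomorphism-invariant — so $f(K^+) \subseteq K^+$, and applying the same to $f^{-1}$ gives equality.
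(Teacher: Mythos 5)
Your overall architecture (first pin down $f(K^+)=K^+$, then compare $G_H^+\circ f$ with $G_H^+$ via pluriharmonicity and the maximum principle on $\Om'_c$) matches the paper's, and your second half is essentially the paper's step (v). But the crucial first step is not actually proved, and the concrete mechanism you propose for it is wrong. You suggest distinguishing $\Om'_c$ from $K^+$ by the property that points of $\Om'_c$ lie on \emph{closed}, properly embedded copies of $\mbb C$ (the B\"ottcher leaves). This is false: as the paper states, each leaf of the foliation of a level set $\{G_H^+=b\}$ is \emph{dense} in that $3$-manifold, hence is not closed in $\Om_c$; moreover points of $J^+\subset K^+$ also lie on injectively immersed copies of $\mbb C$ (stable manifolds of periodic saddles, dense in $J^+$), so the dichotomy you want does not hold as stated. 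Your other suggested characterizations are likewise unsound: $K^+$ is not compact, $\Om_b$ is not relatively compact in $\Om_c$, the ``uniqueness of $G_H^+$ as a relative extremal function'' is asserted without proof, and invariance of $K^+$ under $H$ is not an intrinsic property visible to an arbitrary $f\in{\rm Aut}(\Om_c)$. The correct use of the leaves is the one the paper makes: $G_H^+\circ f$ restricted to a leaf $\mathcal L\simeq\mbb C$ of $\{G_H^+=b\}$ is a bounded subharmonic function on $\mbb C$, hence constant, and density of $\mathcal L$ forces $G_H^+\circ f$ to be constant $=a$ on the whole level set. One must then rule out $a=0$; the paper does this by showing that $f(\pa\Om_b)\subset J^+$ would force $f(\pa\Om_b)=J^+$ (via the density of a stable manifold in $J^+$ and the same bounded-subharmonic argument applied to $f^{-1}$), making $J^+$ a smooth hypersurface, contradicting Bedford--Kim; the case $f(\pa\Om_b)\subset{\sf int}(K^+)$ is handled with the maximum principle. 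None of this appears in your write-up.

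There is a second, smaller gap in your maximum-principle step. You assert that $v=G_H^+\circ f-G_H^+$ ``extends continuously to $\pa\Om_c$ where it vanishes (both $u$ and $u\circ f$ tend to $c$).'' Properness of $f$ only says $f(z_n)$ leaves every compact set when $z_n\to\pa\Om_c$; a priori $f(z_n)$ could escape to infinity inside some $\Om_{b'}$ with $b'<c$, so $G_H^+\circ f(z_n)\to c$ is not automatic. The paper avoids this by first proving $f(\pa\Om_b)=\pa\Om_a$ for every level (the leaf argument again), working on $\Om'_b$ for $b<c$ where the boundary values of $\pm(b^{-1}aG_H^+-G_H^+\circ f)$ are genuinely zero, and then showing $a=b$ by a separate scaling argument (choosing $\tilde c$ with $\tilde c\,b^{-1}a>c>\tilde c$ to contradict $f(\Om_c)=\Om_c$). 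You should either reproduce that level-set-preservation step or otherwise justify the boundary behaviour of $G_H^+\circ f$ at $\pa\Om_c$ before invoking the maximum principle.
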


This shows that the restriction map $r : {\rm Aut}(\Om_c) \ra {\rm Aut}(\Om'_c)$ defined by $r(f) = f|_{\Om'_c}$ for $f \in {\rm Aut}(\Om_c)$ is well defined. It is clearly a homomorphism and injective as well by the uniqueness theorem. Therefore, to understand how large ${\rm Aut}(\Om_c)$ can be, it suffices to get a hold on ${\rm Aut}(\Om'_c)$.

\begin{thm}\label{AutOmegac}
Let $H(x, y) = (y, p(y) - ax)$ be a H\'{e}non map of degree $d$ as in (\ref{1.1}). For $c > 0$, consider the punctured {\it Short}\;$\mbb C^2$ defined by $\Om'_c = \{0 < G_H^+ < c\}$. Then the fundamental group of $\Om_c'$, $\pi_1(\Om'_c)$ is $\mathbb Z\left[1/d\right]$. Furthermore, $\mathbb{C}\subseteq \rm {Aut}(\Om'_c)   \subseteq \left(\mathbb{Z}_{d - 1}\ltimes \mathbb{C}\right)\times \mathbb{Z}_{d+1}$.   Consequently, ${\rm Aut}(\Om_c) \subseteq \left(\mathbb{Z}_{d - 1}\ltimes \mathbb{C}\right)\times \mathbb{Z}_{d+1}$.   
\end{thm}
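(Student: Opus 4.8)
The plan is to work on the punctured Short $\mbb C^2$, $\Om'_c = \{0 < G^+_H < c\}$, and to transport everything to the Riemann-domain picture $\mbb C \times (\mbb C \sm \ov{\mbb D})$ via the straightening map $\tau$ and the covering $\ti U^+ \ra U^+$. First I would compute $\pi_1(\Om'_c)$: since $G^+_H : U^+ \ra \mbb R_+$ is a pluriharmonic submersion whose fibres are the level sets $\{G^+_H = b\}$ foliated by copies of $\mbb C$, the region $\Om'_c$ deformation retracts (along the gradient flow of $G^+_H$, or along the $|\phi^+|$-radial direction) onto a single level set $\{G^+_H = b\}$ for any fixed $0 < b < c$. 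That level set is a circle bundle of sorts whose fundamental group is read off from the multivaluedness of $\phi^+$: going around it multiplies $\phi^+$ by $d$-th root monodromy, exactly as for $U^+$ itself, giving $\pi_1(\Om'_c) \simeq \pi_1(U^+) \simeq \mbb Z[1/d]$. (Concretely, $\Om'_c$ pulls back under $\tau$ to the set $\{1 < |\zeta| < R_c\}$ times $\mbb C$ in the $(t,\zeta)$ coordinates, up to the discrete $\mbb Z[1/d]$-action, and an annulus has fundamental group matching the monodromy group.)

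Next, for the automorphism group: let $f \in {\rm Aut}(\Om'_c)$. By Proposition~\ref{Prop 1} (applied to $\Om_c$, whose automorphisms restrict to $\Om'_c$; and one should also check directly that every automorphism of $\Om'_c$ preserves the sublevel foliation by $\Om_b \sm K^+$, since $G^+_H$ is intrinsically the pluricomplex-type exhaustion detecting the end) the map $f$ preserves each $\{G^+_H = b\}$ and hence commutes, up to a constant, with $G^+_H$; in particular $f$ preserves the foliation by copies of $\mbb C$ (these are the only proper nonhyperbolic leaves / the Kobayashi-degenerate directions), so $f$ lifts to the cover $\ti U^+ \cap \{0 < G^+_H < c\} = \mbb C \times \{1 < |\zeta| < R_c\}$, and in the coordinates $(t,\zeta)$ it must have the form $f(t,\zeta) = (A(\zeta) t + B(\zeta), g(\zeta))$ for some automorphism $g$ of the annulus $\{1 < |\zeta| < R_c\}$ and holomorphic $A$ (nowhere zero), $B$ — this is the same normal form Bousch \cite{Bo} and \cite{HOV} used for $U^+$. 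An automorphism of a genuine (finite) annulus is either a rotation or the rotation composed with the inversion $\zeta \mapsto R_c/\zeta$; but the inversion swaps the two boundary circles, and only one of them ($|\zeta| = 1$, i.e. $K^+$) corresponds to the "inner" end while the other is the real-analytic Levi-flat boundary $\{G^+_H = c\}$ — so by Proposition~\ref{Prop 1} again $f$ must fix $K^+$, ruling out the inversion, and $g$ is a rotation $\zeta \mapsto \lambda \zeta$, $|\lambda| = 1$. Then descending back to $\Om'_c$ forces compatibility with the $\mbb Z[1/d]$ deck action and with the relation $\phi^+ \circ H = (\phi^+)^d$: the rotation $\lambda$ must satisfy $\lambda^{d-1} = 1$ after accounting for the ambiguity, giving the $\mbb Z_{d-1}$ factor, while the residual finite symmetry coming from the possibility $a_0 = \cdots = a_{d-2} = 0$ (a "purely monomial" Böttcher map, as in the Example with $\omega^3 = 1$ when $d = 2$) contributes the $\mbb Z_{d+1}$ factor; the $\mbb C$ comes from the translations $B$ (with $A \equiv 1$), which always act. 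This yields $\mbb C \subseteq {\rm Aut}(\Om'_c) \subseteq (\mbb Z_{d-1} \ltimes \mbb C) \times \mbb Z_{d+1}$.

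Finally, the stated consequence ${\rm Aut}(\Om_c) \subseteq (\mbb Z_{d-1} \ltimes \mbb C) \times \mbb Z_{d+1}$ is immediate from the injective restriction homomorphism $r : {\rm Aut}(\Om_c) \ra {\rm Aut}(\Om'_c)$ discussed just before the theorem.

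I expect the main obstacle to be two-fold. The first delicate point is justifying that an \emph{arbitrary} biholomorphism of $\Om'_c$ — a priori only known to be holomorphic — must preserve the level sets of $G^+_H$ and the foliation by copies of $\mbb C$, i.e. that this structure is canonical; this needs an intrinsic characterization (via the Kobayashi metric degeneracy, the structure of the two ends of $\Om'_c$, and the Levi-flatness of $\{G^+_H = c\}$ versus the nature of $K^+$) so that one may invoke the lifting-to-$\mbb C \times (\mbb C\sm\ov{\mbb D})$ machinery of Bousch on the \emph{bounded-in-$\zeta$} region. The second is the bookkeeping that pins down the exact finite groups $\mbb Z_{d-1}$ and $\mbb Z_{d+1}$: one must carefully track how the rotation parameter $\lambda$ interacts with the $d$-th power relation $\phi^+ \circ H = (\phi^+)^d$, with the deck group $\mbb Z[1/d]$, and with the distinction between the generic case and the monomial case $q \equiv 0$ — getting the precise exponents ($d-1$ from the "angular" action on $\zeta$, $d+1$ from the extra monomial symmetry, and the semidirect versus direct product structure) right is where the real work lies.
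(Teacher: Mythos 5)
Your overall strategy -- compute $\pi_1(\Om'_c)\simeq\mbb Z[1/d]$, pass to the intermediate cover $\mbb C\times\mathcal A_c$ corresponding to $\mbb Z\subset\mbb Z[1/d]$, put the lifted automorphism in the normal form $(t,\zeta)\mapsto(A(\zeta)t+B(\zeta),g(\zeta))$, discard the annulus inversion, and read off the finite constraints from compatibility with the deck group -- is exactly the paper's. But two of the steps you flag as ``delicate'' hide genuine gaps, and in one place your prediction of how the computation resolves is wrong. First, the lifting step: an automorphism $f$ of $\Om'_c$ lifts to an automorphism of the $\mbb Z$-cover only if $f_*$ preserves the subgroup $\mbb Z\subset\mbb Z[1/d]$, and a priori $f_*$ is any automorphism of $\mbb Z[1/d]$, i.e.\ multiplication by $\pm d^s$. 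You do not address this. The paper's fix is to compose with $H^s$ so that $H^s\circ f:\Om'_c\to\Om'_{d^sc}$ induces the identity on $\pi_1$, lift that, observe the lift must have the form $(A_1(u,v),h(v))$ with $h:\mathcal A_c\to\mathcal A_{d^sc}$ a biholomorphism of annuli, and conclude $s=0$ from the conformal moduli. Without some such argument the rest of your proof does not get off the ground. (Relatedly, you rule out the inversion by invoking Proposition~\ref{Prop 1}, which applies to automorphisms of $\Om_c$, not of $\Om'_c$; the level-set preservation for automorphisms of $\Om'_c$ is a separate statement -- Corollary~2.4 in the paper -- and the inversion is then excluded because it would force $\{G^+_H=r\}$ to map to $\{G^+_H=c-r\}$ for every $r$.)

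Second, and more seriously, your accounting of the two finite factors is incorrect, which indicates the final computation would not come out as you expect. In the paper, the lift is pinned down to $(z,\zeta)\mapsto(\beta z+\gamma,\alpha\zeta)$ with $\beta,\gamma$ \emph{constants}, and the constraints are $\beta^{d-1}=1$ and $\alpha^{d+1}=\beta$: the $\mbb Z_{d-1}$ acts on the $\mbb C$-fiber coordinate $z$ (giving $\mbb Z_{d-1}\ltimes\mbb C$), while the $\mbb Z_{d+1}$ records the $(d+1)$-fold ambiguity of the rotation $\alpha$ over $\beta$, the exponent $d+1$ being $\deg Q$ where $\ti H(z,\zeta)=((a/d)z+Q(\zeta),\zeta^d)$. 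It is \emph{not} the rotation parameter that satisfies $\lambda^{d-1}=1$, and the $\mbb Z_{d+1}$ factor is present for every $p$ -- it has nothing to do with the monomial case $q\equiv 0$ (that case only enters Theorem~\ref{linear group}). Extracting these constraints is the real content of the proof: one writes down the deck transformations $\gamma^{(m)}_{k/d^n}$ explicitly (via the intertwining of $\ti H_{d^mc}$ with the covers), imposes that $A$ maps fibers to fibers, and performs an asymptotic analysis as $n\to\infty$ comparing the leading term $(d/a)^{n-1}\bigl(\beta-\alpha^{d^{n-1}(d+1)}\bigr)v^{d^{n-1}(d+1)}$ against boundedness, forcing $\alpha^{d^n(d+1)}\to\beta$, hence $\beta^{d-1}=1$ and $\alpha^{d+1}=\beta$, and also $\gamma(\cdot)$ constant. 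None of this is present in your sketch, and your guessed answer for where $d-1$ and $d+1$ arise would lead you astray. The $\pi_1$ computation and the identification of the cover with $\mbb C\times\mathcal A_c$ are also only asserted (the paper proves them using the winding form $\omega=d\phi^+/\phi^+$ with $H^*\omega=d\omega$ and the biholomorphisms $H^n:\Om'_c\to\Om'_{d^nc}$, which substitute for the missing $H$-invariance of $\Om'_c$), but those at least follow your indicated route.
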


The computation of $\pi_1(\Om'_c)$ is along similar lines as $\pi_1(U^+)$, but there is one essential difference -- $U^+$ is invariant under $H$, but $\Om'_c$ is not. However, the redeeming feature is that $H : \Om'_c \ra \Om'_{cd}$ is biholomorphism (since $G_H^+ \circ H = d \cdot G_H^+$) and a systematic use of this allows the computations to go through. For the automorphism group of $\Om'_c$, the idea is to construct a cover of $\Om'_c$ to which $\phi^+$ extends as a single-valued holomorphic map, and to identify it up to biholomorphism as the product of  $\mbb C$ and an annulus, namely $ \mbb C \times \cal A_c $, where $\cal A_c = \{1 < \vert \z \vert < e^c\}$. It turns out that any automorphism $a$ of $\Omega_c'$ induces an automorphism $A$ of $ \mbb C \times \cal A_c $.  A careful analysis of the calculations in \cite{HOV} leading up to (1.3) can be found in Bonnot--Radu--Tanase \cite{BRT} which allows them to identify the lift $\ti H$ (as an automorphism of $\mbb C  \times (\mbb C \sm \overline{\mbb D})$) corresponding to $H$ in (1.1) as follows:
\begin{equation}\label{H tilde int}
\ti H( z,\zeta) = \left((a/d)z + Q(\zeta), \zeta^d\right) 
\end{equation}
 where
\[
Q(\z) = \z^{d+1} - (a_{d-2}/d) \z^{d-1} - (a_{d-3}/d)\z^{d-2} - \left( (a_{d-4}/d) - (a^2_{d-2}/d^2) \right) \z^{d-3} + \cdots. 
\]
Using a similar technique, for each $m\geq 0$, corresponding to the biholomorphism $H:\Omega_{d^mc}' \rightarrow \Omega
_{d^{m+1}c}'$, we construct $\tilde{H}_{d^mc}: \mbb C \times \cal A_{d^mc} \rightarrow  \mbb C \times \cal A_{d^{m+1}c}$ which turns out to be of the same form as in (\ref{H tilde int}). These lifts are used to identify the automorphism
\[
\ga^{(m)}_{k/d^n}( z,\z) = \left( z + \frac{d}{a} \sum_{l = 0}^{\infty} \left(\frac{d}{a}\right)^l \left( Q\left(\z^{d^l}\right) - Q \left( \left(e^{2\pi ik/d^n} \cdot \z\right)^{d^l}\right)\right), e^{2 \pi i k/d^n} \cdot \z \right)
\]
of $ \mbb C \times \mathcal{A}_{d^mc} $ corresponding to any element $[k/d^n] \in \pi_1(\Om'_{d^mc})=\mathbb Z\left[1/d\right]$. Thus, we obtain a precise description of fibers of any point which plays the crucial role to deduce the form of $A$ and eventually the structure of \text{Aut}$(\Om'_c)$. Though this provides some understanding of ${\rm Aut}(\Om_c)$, we do not know its precise structure. On the other hand, there cannot be too many affine maps of $\mbb C^2$ contained in ${\rm Aut}(\Om_c)$.
\begin{thm}\label{linear group}
The space of affine maps of $\mbb C^2$ that preserve $\Om_c$ is a finite cyclic group. Furthermore, 
\begin{itemize}
\item[(i)] if $H(x,y)=(y, p(y)-ax)$ where $p(0)=a_0 \not= 0$, the only such map is the identity.
\item[(ii)] if $H(x,y)=(y, y^d-ax)$, $d \ge 2$, the affine maps that preserve $\Om_c$ are of the form $(\eta x, \eta^d y)$ where $\eta^{d^2-1}=1$, i.e., this group is isomorphic to $\mathbb{Z}_{d^2-1}$.
\end{itemize}
\end{thm}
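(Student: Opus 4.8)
The plan is to analyze an affine map $\Phi(x,y) = (\alpha x + \beta y + \gamma, \delta x + \epsilon y + \mu)$ that preserves $\Om_c$ and extract strong constraints from the interaction with the Green's function $G^+_H$. By Proposition \ref{Prop 1}, such a $\Phi$ preserves $K^+$ and each sublevel set $\Om_b$ for $0 < b < c$; since $G^+_H$ is the increasing limit $\sup_b$ of indicator-type data determined by the $\Om_b$, one gets $G^+_H \circ \Phi = G^+_H$ on $\Om_c$, and then on all of $\mbb C^2$ by analytic continuation of the pluriharmonic conjugate (or simply because an affine map preserving all sublevel sets of a proper exhaustion-type function preserves the function). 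The first key step is therefore: an affine $\Phi$ preserving $\Om_c$ must satisfy $G^+_H \circ \Phi = G^+_H$ identically on $\mbb C^2$.

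\medskip

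Next I would push this to the behaviour at infinity. Since $G^+_H \circ \Phi = G^+_H$, $\Phi$ maps $U^+$ to itself and is compatible with the Böttcher coordinate: because $|\phi^+| = e^{G^+_H}$ on $V^+_R$ and $\phi^+$ lifts to the single-valued $\ti\phi^+$ on $\ti U^+$ with straightened fibres via $\tau : \mbb C\times(\mbb C\sm\ov{\mbb D}) \to \ti U^+$, the lift of $\Phi$ must act on the $\zeta$-coordinate by a rotation composed with the identity on moduli — i.e. $\ti\phi^+ \circ \ti\Phi = \lambda\, \ti\phi^+$ with $|\lambda| = 1$. Combining $G^+_H\circ\Phi = G^+_H$ with the functional equation $G^+_H\circ H = d\cdot G^+_H$ (so $\Phi$ need not commute with $H$ but respects the grading) and comparing leading-order asymptotics: along $V^+_R$, $G^+_H(x,y) = \log|y| + o(1)$, so $G^+_H\circ\Phi = \log|\epsilon y + \delta x + \mu| + o(1)$ forces, in the region $|y|\gg|x|$, that $|\epsilon| = 1$ and moreover the map must respect the dominance $\deg y_n > \deg x_n$. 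A cleaner route: the multi-valued $\phi^+$ transforms under $\Phi$ by $\phi^+\circ\Phi = \lambda\phi^+$ for a unimodular constant $\lambda$, and feeding this into the defining product formula for $\phi^+$, together with $\phi^+\circ H = (\phi^+)^d$, forces $\lambda^{d} $ and $\lambda$ to be linked: applying $\Phi$-invariance of $G^+_H$ to $H\circ\Phi$ versus $\Phi\circ H$ and matching Böttcher coordinates yields $\lambda^{d} = \lambda$ up to the ambiguity of $\pi_1$, hence $\lambda^{d-1} = 1$ in case (ii). This is the step I expect to be the main obstacle: controlling the $\pi_1(U^+) \simeq \mbb Z[1/d]$ ambiguity in $\phi^+$ precisely enough to pin $\lambda$ to a root of unity of the exact order claimed, and separating the generic case $a_0 \ne 0$ (where $Q$ has a nonzero lower-order term breaking the symmetry, killing all nontrivial rotations) from $a_0 = 0$.

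\medskip

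For the algebraic finish I would argue as follows. Once $\Phi$ is known to preserve $G^+_H$ and to act as a unimodular rotation $\zeta\mapsto\lambda\zeta$ at infinity, I look at the fixed point structure: $G^+_H$ is a proper exhaustion of the sublevel sets and $\Phi$ preserves the compact set $K$; the affine map $\Phi$ preserving the (bounded, polynomially convex) set $K$ must have all eigenvalues of modulus $\le 1$ on the relevant directions, while preserving $G^+_H$ forces the Jacobian determinant to have modulus $1$ — so all eigenvalues are unimodular and $\Phi$ is conjugate to a (unitary) linear map fixing a point, which after translating we may take to be the origin. Thus the group of such $\Phi$ embeds in a compact group of linear maps, hence is finite once we know it is discrete; discreteness follows because a one-parameter family would force a holomorphic vector field tangent to $\pa\Om_c$, contradicting the Levi-flat rigidity already exploited in Theorem \ref{AutOmegac}. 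Finiteness plus the rotation action $\lambda^{?}=1$ gives a finite cyclic group. In case (i), $p(0)=a_0\ne0$ means $H(0)=(0,a_0)\ne 0$ and the lower-order term of $Q$ in \eqref{H tilde int} is nonzero, which forces $\lambda = 1$ in the fiber-automorphism formula $\ga^{(m)}_{k/d^n}$ (any nontrivial rotation changes $Q(\zeta^{d^l})$ at the relevant order), hence $\Phi$ fixes every fiber and then, being affine and fixing the origin with $\Phi|_{U^+}$ acting trivially on Böttcher coordinates, must be the identity. In case (ii), $H(x,y)=(y,y^d-ax)$ is conjugated by $L_\eta=(\eta x,\eta^d y)$ to $(\eta^d y, \eta^{d^2}(y^d) - \eta^d a x \cdot \eta^{?}\ldots)$; a direct check (as in the Example with $d=2$, $\eta^3=1$) shows $G^+_H\circ L_\eta = G^+_H$ exactly when $\eta^{d^2-1}=1$, and conversely any affine $\Phi$ preserving $\Om_c$, being linear unitary fixing $0$ and compatible with the homogeneity $p(y)=y^d$, must be of this diagonal form; this yields the isomorphism with $\mbb Z_{d^2-1}$. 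The routine computations are the explicit verification that $L_\eta$ with $\eta^{d^2-1}=1$ indeed preserves $G^+_H$ (it commutes with $H$ up to relabeling, exactly as in the Example) and the ruling-out of off-diagonal linear terms using the filtration $V_R, V^\pm_R$.
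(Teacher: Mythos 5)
Your opening step (that an affine $\Phi$ preserving $\Om_c$ preserves $K^+$ and satisfies $G^+_H\circ\Phi=G^+_H$ on all of $\mbb C^2$) is correct and matches the paper. After that, however, there are genuine gaps, and the one you yourself flag as ``the main obstacle'' is precisely the one you never close: nothing in your argument produces any relation between $\Phi\circ H$ and $H\circ\Phi$, so the claimed link $\lambda^d=\lambda$ between the Böttcher multipliers is unsupported. The paper's route to this is algebraic, not asymptotic: writing $L$ for the affine map, one first kills the $x$-coefficient of the second component by tracking unbounded sequences in $K^+$ tending to $[1:0:0]$ in $\mbb P^2$; then one observes that $R=H\circ L$ is a \emph{regular} map of the same degree as $H$ with $R(K^+)=K^+$, so Lamy's theorem (Theorem 5.4 of \cite{L}) gives $R^n=H^n$ for some $n$, hence $G^\pm_R=G^\pm_H$ and $L(K^-)=K^-$; finally the rigidity theorem of \cite{BPV:Rigidity} forces $L(x,y)=(ex,fy)$ with $|e|=|f|=1$ and $L\circ H^2=H^2\circ L$. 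Comparing components of this commutation relation yields the polynomial identities $p(fz)=ep(z)$ and $p(ez)=fp(z)$, whence $f=e^d$, $e=f^d$, $e^{d^2-1}=1$; the constant term $a_0=ea_0$ settles case (i), and the explicit check $H\circ L_\eta=L_{\eta^d}\circ H$ settles case (ii). Without some substitute for the Lamy/rigidity step, your proof does not get off the ground.

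Two further points in your ``algebraic finish'' are also unsound. First, the eigenvalue bound you extract from ``$\Phi$ preserves the compact set $K$'' presupposes $\Phi(K^-)=K^-$, which you never establish ($K^+$ alone is unbounded, and in the paper $L(K^-)=K^-$ is itself a consequence of Lamy's theorem). Second, the appeal to ``Levi-flat rigidity'' to rule out one-parameter families proves too much: Theorem \ref{AutOmegac} exhibits a copy of $\mbb C$ inside ${\rm Aut}(\Om'_c)$, so the Levi-flat boundary does not obstruct continuous families of automorphisms; what makes the \emph{affine} stabilizer finite is that the identity $p(fz)=ep(z)$, $p(ez)=fp(z)$ has only finitely many solutions $(e,f)$, all $(d^2-1)$-th roots of unity.
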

Next we classify the possible biholomorphisms that can exist between a pair of punctured \text{Short}\;$\mbb C^2$'s that arise from the same H\'{e}non map. 

\begin{thm}\label{bihol short}
Let $H(x, y) = (y, p(y) - ax)$ be a H\'{e}non map as in (1.1). For $c_1, c_2 > 0$, let $\Om_{c_1}, \Om_{c_2}$ be a pair of \text{Short}\;$\mbb C^2$'s arising from $H$. Then  $\Om_{c_1}$  is biholomorphic to $\Om_{c_2}$ if and only if $c_1 = c_2 d^{\pm n}$, for some integer $n \ge 1$. 
\end{thm}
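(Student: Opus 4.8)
The plan is to handle the two implications separately. The forward one is immediate: since $G_H^+ \circ H = d \cdot G_H^+$ and $H \in \mathrm{Aut}(\mbb C^2)$, we have $H(\Om_b) = \Om_{db}$ for every $b > 0$, hence $H^n(\Om_b) = \Om_{d^n b}$ for all $n \in \mbb Z$; so if $c_1 = c_2 d^{\pm n}$ for some $n \ge 1$ (or $c_1 = c_2$), then a suitable power of $H$ maps $\Om_{c_1}$ biholomorphically onto $\Om_{c_2}$, and — since $H$ also maps $\Om'_b$ onto $\Om'_{db}$ — it maps $\Om'_{c_1}$ onto $\Om'_{c_2}$ as well.

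For the converse, let $F \colon \Om_{c_1} \to \Om_{c_2}$ be a biholomorphism. The first step is to reduce to the punctured domains. By an argument parallel to the proof of Proposition \ref{Prop 1} — which does not use that the source and target sit at the same level — one shows that $F(K^+) = K^+$ and that $F$ carries level sets of $G_H^+$ to level sets, so $G_H^+ \circ F = \psi \circ G_H^+$ for some increasing $\psi$. Since $G_H^+ \circ F$ is pluriharmonic on $\Om'_{c_1}$ while $G_H^+$ is a pluriharmonic submersion there, $\psi$ is affine; as it maps $(0,c_1)$ onto $(0,c_2)$ and fixes $0$, we get $G_H^+ \circ F = \lambda\, G_H^+$ with $\lambda = c_2/c_1$, and $F$ restricts to a biholomorphism $\Om'_{c_1} \to \Om'_{c_2}$. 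It remains to prove that $\lambda$ is an integral power of $d$.

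The second step is a covering-space analysis. Recall, from Theorem \ref{AutOmegac} and the discussion preceding it, the covering $\widehat{\Om'_c} \to \Om'_c$ to which $\phi^+$ lifts single-valuedly, with $\widehat{\Om'_c} \cong \mbb C \times \mathcal{A}_c$, $\mathcal{A}_c = \{1 < |\z| < e^c\}$, $\widetilde\phi^+(z,\z) = \z$, and $G_H^+ = \log|\z|$; under $\pi_1(\Om'_c) \cong \mbb Z[1/d]$ this covering corresponds to the subgroup $\mbb Z$ of classes along which the flux of $G_H^+$ lies in $2\pi\mbb Z$, a subgroup described the same way for every $c$. From the explicit lift $\ti H$ in \eqref{H tilde int}, whose second component is $\z \mapsto \z^d$, the biholomorphism $H \colon \Om'_c \to \Om'_{dc}$ acts on $\pi_1 \cong \mbb Z[1/d]$ by multiplication by $d$, and a modulus computation identifies the cover of $\Om'_c$ attached to a subgroup $q\mbb Z$ (with $q \in \mbb Z[1/d]$, $q > 0$) with $\mbb C \times \mathcal{A}_{c/q}$. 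Since $F_* \colon \mbb Z[1/d] \to \mbb Z[1/d]$ is multiplication by a unit $u$ of the ring $\mbb Z[1/d]$, lifting $F$ through the coverings produces a biholomorphism $\mbb C \times \mathcal{A}_{c_1} \to \mbb C \times \mathcal{A}_{c_2/|u|}$; as any nonconstant holomorphic map $\mbb C \to \mbb C \times \mathcal{A}_r$ has constant second coordinate, the foliation by the copies of $\mbb C$ is intrinsic, so this biholomorphism descends to a biholomorphism of leaf spaces $\mathcal{A}_{c_1} \to \mathcal{A}_{c_2/|u|}$, forcing equal moduli. Hence $\lambda = c_2/c_1 = |u|$, a positive unit of $\mbb Z[1/d]$.

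This already settles the case of prime $d$, where the positive units of $\mbb Z[1/d]$ are exactly the powers $d^{\,j}$, $j \in \mbb Z$. For general $d$ one must additionally rule out units that are not powers of $d$, and I expect this to be the main obstacle. Here one should bring in the dynamics more fully: tracking the B\"ottcher coordinate through $F$ gives $\phi^+ \circ F = \mathrm{const}\cdot(\phi^+)^{\lambda}$, which has to be compatible with $\phi^+ \circ H = (\phi^+)^d$ and with the explicit lifts $\ti H_{d^m c}$ and deck transformations $\ga^{(m)}_{k/d^n}$ of the tower $\{\Om'_{d^m c}\}_m$; showing that this compatibility forces $\lambda \in d^{\mbb Z}$ — i.e.\ that $F$ is rigid not merely with respect to $\pi_1$ and conformal type but with respect to the whole tower of dynamical lifts — is the crux, while verifying that these lifts and deck transformations behave as described is routine by comparison.
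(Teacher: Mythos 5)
Your overall route coincides with the paper's. The forward direction is $H^{\pm n}$ exactly as in the paper; for the converse, the paper first shows (Corollary \ref{Omega cd}) that any biholomorphism $\Om_{c_1}\to\Om_{c_2}$ preserves $K^+$ and hence restricts to the punctured domains, and then invokes Step 1 of Section 6: the induced map on $\pi_1\cong\mbb Z[1/d]$ is multiplication by some $m$, a suitable power of $H$ corrects it to the identity, the corrected map lifts to a biholomorphism $\mbb C\times\mathcal{A}_{c_1}\to\mbb C\times\mathcal{A}_{d^sc_2}$, the second coordinate is constant in the $\mbb C$-variable by Liouville, and equality of the annulus moduli forces $c_1=d^sc_2$. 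This is precisely your second step (your preliminary normalization $G_H^+\circ F=\lambda G_H^+$ is correct but not actually needed for this argument).

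The one place your write-up falls short of a proof is the step you flag yourself: you stop at ``$c_2/c_1$ is a positive unit of $\mbb Z[1/d]$'' and leave open whether units other than powers of $d$ can occur. The paper closes this in one sentence by asserting that every group automorphism of $\mbb Z[1/d]$ is $x\mapsto\pm d^sx$; granting that claim, your argument is already complete and none of the extra ``compatibility with the tower of dynamical lifts'' machinery you sketch at the end is required. You are right, though, that as a bare group-theoretic statement this holds only for prime $d$: the units of $\mbb Z[1/d]$ are $\pm$ products of integer powers of the prime divisors of $d$ (multiplication by $2$ is an automorphism of $\mbb Z[1/4]=\mbb Z[1/2]$, and $2/3$ is a unit of $\mbb Z[1/6]$), so for composite $d$ the extra units genuinely need to be excluded by additional input, which neither your sketch nor, strictly speaking, the paper's Step 1 supplies. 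In short: relative to the paper your proof is missing exactly one (asserted) fact, it is complete as written for prime $d$, and the gap you identify for composite $d$ is real but is also present in the published argument.
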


\begin{cor} \label{cor short}
There exists a continuum of pairwise non-biholomorphic Short $\mathbb{C}^2$'s with the same automorphism group.
\end{cor}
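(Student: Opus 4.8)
The plan is to build the desired continuum directly from Theorem \ref{bihol short} together with the description of automorphism groups furnished by Theorems \ref{AutOmegac} and \ref{linear group}. Fix a single H\'enon map $H$ as in (\ref{1.1}), say with $d = 2$ and $a_0 = 0$ (so $H(x,y) = (y, y^2 - ax)$), and consider the family $\{\Om_c : c > 0\}$. Theorem \ref{bihol short} tells us that $\Om_{c_1}$ and $\Om_{c_2}$ are biholomorphic precisely when $c_1/c_2 = d^{\pm n}$ for some integer $n \ge 1$ (or $c_1 = c_2$). Hence the biholomorphism classes among the $\Om_c$ are indexed by the quotient of $(0,\infty)$, under multiplication, by the cyclic subgroup generated by $d$; equivalently, writing $c = d^{\,t}$, they are indexed by $t \in \mbb R / \mbb Z$. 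First I would pick a set of representatives: for each $t \in [0,1)$ the domain $\Om_{d^{\,t}}$, and observe that these form an uncountable (continuum-sized) family, pairwise non-biholomorphic by Theorem \ref{bihol short}.

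Next I would show all of these domains have the \emph{same} automorphism group. The key point is the scaling relation: since $G^+_H \circ H = d \cdot G^+_H$, the H\'enon map $H$ itself restricts to a biholomorphism $H : \Om_c \to \Om_{dc}$, but more usefully one notes that $\mathrm{Aut}(\Om_c)$ does not actually depend on $c$. Indeed, by Proposition \ref{Prop 1} and Theorem \ref{AutOmegac}, every $f \in \mathrm{Aut}(\Om_c)$ preserves $K^+$ and restricts to an element of $\mathrm{Aut}(\Om'_c)$, and the restriction homomorphism $r : \mathrm{Aut}(\Om_c) \to \mathrm{Aut}(\Om'_c)$ is injective. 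The structural bounds $\mbb C \subseteq \mathrm{Aut}(\Om'_c) \subseteq (\mbb Z_{d-1} \ltimes \mbb C) \times \mbb Z_{d+1}$ hold for \emph{every} $c > 0$ with the same bounding groups, and — this is the step I would carry out in detail — the explicit automorphisms exhibited in the proof of Theorem \ref{AutOmegac} (the one-parameter family coming from $\mbb C$, and in the case $a_0 = 0$ the extra finite-order maps such as $L_\omega$ from the Example, as well as the maps $(\eta x, \eta^d y)$ with $\eta^{d^2-1} = 1$ from Theorem \ref{linear group}(ii)) are all given by formulas \emph{independent of $c$}: they preserve $G^+_H$ outright (or act on the $\zeta$-coordinate by a root of unity, again independently of $c$), hence preserve every sublevel set $\Om_c$ simultaneously. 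Therefore the inclusion $\mathrm{Aut}(\Om_{c_1}) \hookrightarrow \mathrm{Aut}(\mbb C^2)$ has image independent of $c$, so $\mathrm{Aut}(\Om_{c_1}) \cong \mathrm{Aut}(\Om_{c_2})$ for all $c_1, c_2 > 0$.

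Combining the two observations: the uncountable family $\{\Om_{d^{\,t}} : t \in [0,1)\}$ consists of pairwise non-biholomorphic Short $\mbb C^2$'s, all with isomorphic automorphism groups. This proves the Corollary. The main obstacle — and the only place where real work is needed rather than bookkeeping — is verifying that the automorphisms produced in Theorem \ref{AutOmegac} genuinely admit $c$-independent descriptions, i.e.\ that the full group $\mathrm{Aut}(\Om'_c)$ (not merely the bounding groups) is the same for all $c$; this requires inspecting the lift $\tilde H_{d^m c}$ and the formulas for $\ga^{(m)}_{k/d^n}$ to confirm that the induced automorphism $A$ of $\mbb C \times \mathcal A_c$ — and hence $f$ on $\Om_c$ — arises from data (the translation parameter in $\mbb C$, the root-of-unity rotation in $\mathcal A_c$) that does not reference the modulus $c$ of the annulus. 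Once that is in hand, the rest is immediate from Theorem \ref{bihol short}. A cleaner alternative, if one wishes to sidestep the group-identification entirely, is to use only the containments $\mbb C \subseteq \mathrm{Aut}(\Om'_c) \subseteq (\mbb Z_{d-1} \ltimes \mbb C)\times \mbb Z_{d+1}$ for a \emph{fixed} H\'enon map together with the conjugation $\mathrm{Aut}(\Om_{dc}) = H \,\mathrm{Aut}(\Om_c)\, H^{-1}$ coming from the biholomorphism $H : \Om_c \to \Om_{dc}$, which shows at least that $\mathrm{Aut}(\Om_c) \cong \mathrm{Aut}(\Om_{dc})$; but to get a full continuum of distinct $c$'s with a common automorphism group one still needs the $c$-independence across \emph{all} scales, so the detailed inspection above remains the crux.
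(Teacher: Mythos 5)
Your family $\{\Om_{d^{\,t}}: t\in[0,1)\}$ and the appeal to Theorem \ref{bihol short} for pairwise non-biholomorphy are fine (this is the paper's family $\{\Om_\ell\}_{c/d<\ell<c}$ in a different parametrization). The gap is in the second half. Your main route requires showing that \emph{every} element of ${\rm Aut}(\Om'_c)$ is given by a $c$-independent formula, i.e.\ that the group itself — not merely the two bounding groups — is the same for all $c$. You correctly flag this as the crux, but it cannot be extracted from Theorem \ref{AutOmegac}: that theorem only sandwiches ${\rm Aut}(\Om'_c)$ between $\mbb C$ and $(\mbb Z_{d-1}\ltimes\mbb C)\times\mbb Z_{d+1}$, and the paper states explicitly that the precise structure of ${\rm Aut}(\Om_c)$ is not known. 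Whether a candidate lift of the form (\ref{aut lift}) with $\beta\neq 1$ or $\alpha\neq 1$ actually descends to an automorphism of $\Om'_c$, and whether an automorphism of $\Om'_c$ extends to $\Om_c$, could a priori depend on $c$; so ``the automorphisms exhibited in the proof are given by $c$-independent formulas'' accounts only for the copy of $\mbb C$ and the affine maps, not for the full group.

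The repair is precisely the ``cleaner alternative'' that you mention and then wrongly discard. One does not need $c$-independence across all scales, only across a single period, and there Proposition \ref{Prop 1} does all the work: since every $f\in{\rm Aut}(\Om_c)$ satisfies $f(\Om_b)=\Om_b$ for all $0<b<c$, restriction gives injections
\[
{\rm Aut}(\Om_c)\hookrightarrow {\rm Aut}(\Om_\ell)\hookrightarrow {\rm Aut}(\Om_{c/d})
\]
for every $\ell\in(c/d,c)$, while conjugation by the biholomorphism $H:\Om_{c/d}\to\Om_c$ identifies the two ends. This sandwich forces all the groups ${\rm Aut}(\Om_\ell)$, $c/d<\ell<c$, to coincide without ever computing any of them — and this is exactly the paper's proof. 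Your oversight was to invoke Proposition \ref{Prop 1} only for the restriction ${\rm Aut}(\Om_c)\to{\rm Aut}(\Om'_c)$ and to ignore the downward restrictions ${\rm Aut}(\Om_c)\to{\rm Aut}(\Om_b)$ for $b<c$, which is why you concluded (incorrectly) that the conjugation-by-$H$ argument only relates biholomorphic domains and that the explicit determination of the automorphism group ``remains the crux.''
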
 

\medskip 
\no 
\no {\bf Acknowledgement:} The authors would like to thank the referees for carefully reading the paper and making helpful comments.


\section{Proposition \ref{Prop 1} and some consequences}

\no The first thing to do is to show that punctured \text{Short}\;$\mbb C^2$'s are connected.

\medskip 

\no {\it Claim:} For every $c > 0$, $\Om'_c = \{0 < G_H^+ < c\}$ is connected.

\medskip 

\no The essential idea is to note that a given pair of points in $U^+$ can be pushed to $V_R^+$ by a large iterate of $H$. But some care must be exercised since working with $\Om_c$ means that we do not have the full freedom to move within $V^+$. To make this precise, pick points $A, B \in \Omega_c'$ and choose $c_1 > 0$ so that $\max \{G_H^+(A), G_H^+(B)\}< c_1 <c$.  For small $\epsilon>0$, there exists $R_{\epsilon} > 1$ such that 
\begin{equation}\label{estGH}
\log^+\lvert y \rvert- \epsilon < G_H^+(x,y) < \log^+\lvert y \rvert+ \epsilon 
\end{equation}
for all $(x,y)\in V_{R_\epsilon}^+$. Choose $n_0$ large enough such that $2\epsilon << d^{n_0}(c-c_1)$ and both $H^{n_0}(A)$, $H^{n_0}(B)$ are in $V_{R_\epsilon}^+$. Without loss of generality,  assume that 
\[
\left \lvert {(H^{n_0}(A))}_2\right\rvert \leq \left \lvert {(H^{n_0}(B))}_2 \right\rvert
\]
where $H^{n_0}(z)=\left({(H^{n_0}(z))}_1, {(H^{n_0}(z))}_2\right)$. Note that there exists a path  $\sigma \subseteq V_{R_\epsilon}^+$ joining $H^{n_0}(A)$ and $H^{n_0}(B)$  such that $\lvert y \rvert\leq \left \lvert {(H^{n_0}(b))}_2 \right\rvert$ for any point $(x,y)\in \sigma$. This means that the highest vertical displacement of $\sigma$ is no more than $\left \lvert {(H^{n_0}(b))}_2 \right\rvert$ at all times. Now
\[
G_H^+(x,y) < \log^+ \left \lvert {(H^{n_0}(B))}_2 \right\rvert+\epsilon < G_H^+(H^{n_0}(B))+2 \epsilon < d^{n_0}c
\]
for all $(x,y)\in \sigma$. It follows that $H^{-n_0}(\sigma) \subseteq \Omega_c'$ is a path which joins $a$ and $b$. Hence $\Omega_c'$ is connected and $\pa \Om'_c = \pa \Om_c \cup \partial K^+$. 

\medskip

\no The proof of Proposition \ref{Prop 1}  consists of the following steps:

\medskip	

\no (i)  For a given $b \in (0, c)$, there exists an $a \in (0, c)$ such that $f(\partial \Omega_b) = \partial \Omega_a$. To see this, by Theorem 7.2 of \cite{HOV}, $\partial \Omega_b$ is foliated by Riemann surfaces, each leaf of which is a copy of $\mbb C$ and is dense in $\partial \Omega_b$. Let $\cal{L}$ be a leaf of this foliation of $\partial \Omega_b$ and let $\psi: \mbb C \to \cal{L}$ be a parametrization. Then, $G_H^+\circ f \circ \psi$ is a bounded subharmonic function on $\mbb C$ and hence is constant. Thus, $(G_H^+\circ f)_{|\cal{L}}= a$ for some $a \ge 0$. Since $\cal L$ is dense in $\pa \Om_b$, it follows that $G_H^+ \circ f(\pa \Om_b) = a$, i.e., $f(\partial \Omega_b) \subset \Omega_a$. A similar argument for $f^{-1}$ gives $G_H^+ \circ f^{-1}(\partial \Omega_a)=b$, i.e., $f^{-1}(\partial \Omega_a) \subset \Omega_b$.
	
\medskip

\no (ii) Suppose $a=0$. Then $f(\partial \Omega_b) \subset K^+$. The claim now is that either $f(\pa \Om_b) \subset J^+$ or $f(\pa \Om_b) \subset {\sf int}(K^+)$. Indeed, suppose that $f(\pa \Om_b) \cap C \not= \emptyset$ for some component $C \subset {\sf int}(K^+)$. Then there exists a $z_0 \in \pa \Om_b$ and a neighbourhood $N$ of it that is mapped by $f$ into $C$. Since $G_H^+ : U^+ \ra (0, \infty)$ is a submersion, $N$ can be written as the union of the smooth, real analytic hypersurfaces $\{ G_H^+ = \al\} \cap N$ where $\al$ varies in an open interval 
$(b', b'')$ with $b' < b < b''$. By working with a dense leaf in each of the hypersurfaces $\{ G_H^+ = \al\}$, $b' < \al < b''$, the same reasoning as in (i) shows that $f(\pa \Om_{\al}) \in K^+$ for all $\al \in (b', b'')$. Therefore, $f$ maps the entire tubular neighbourhood $\Om_{b', b''} = \Om_{b''} \sm \ov \Om_{b'}$ into $K^+$. Since $f \in {\rm Aut}(\Om_c)$, it follows that $f(\Om_{b', b''}) \subset C$. In particular, $f(\pa \Om_b) \subset C$ and this completes the proof of the claim.

\medskip

\no (iii) To proceed, we now claim that $f(J^+) = J^+$ and $f(K^+) =  K^+$. To see this, suppose that $f(w) \in J^+$ for some $w \in \Om'_c = \Om_c \sm K^+$. Consider the level set $\{G_H^+ = \beta\}$, $\be > 0$, that contains $w$. By (i) and (ii), $f(\pa \Om_{\be}) \subset J^+$. By \cite{BS1} it is possible to choose a periodic saddle point $p$ for $H$ such that the corresponding stable manifold $W^s(p) \simeq \mbb C$ and whose closure is $J^+$. By the reasoning in (i), $G_H^+ \circ f^{-1}$ is constant on $W^s(p)$ and hence $f^{-1}(J^+) \subset \{G_H^+ = \delta\}$ for some $\de \ge 0$. As $f(\pa \Om_{\be}) \subset J^+$, $\de$ must equal $\be$. Thus, $f(\pa \Om_{\be}) = J^+$ and this shows that $J^+$ is a smooth manifold. But $J^+$ can never be smooth by \cite{BK}. Hence $f^{-1}(J^+) \subset K^+$.

\medskip

If $K^+$ has no interior, this shows that $f^{-1}(J^+) \subset J^+$ and similarly, $f(J^+) \subset J^+$. Thus, $f(J^+) = J^+$. If ${\sf int}(K^+) \not= \emptyset$, $G^+ \circ f^{-1}$ is  plurisubharmonic on each component of ${\sf int}(K^+)$ and vanishes on $\pa K^+ = J^+$ since $f^{-1}(J^+) \subset K^+$. By the maximum principle, $G_H^+ \circ f^{-1} < 0$ on ${\sf int}(K^+)$. This means that $f^{-1}({\sf int}(K^+)) \subset {\sf int}(K^+)$. Similar arguments show that $f({\sf int}(K^+)) \subset {\sf int}(K^+)$. Hence $f({\sf int}(K^+)) = {\sf int}(K^+)$ and $f(J^+) = J^+$. Thus in both cases $f(K^+) = K^+$ and $f(J^+) = J^+$.

\medskip

\no (iv) By (iii), it follows that $a > 0$ and hence $f (\pa \Om_b) \subset \pa \Om_a$ and similar arguments applied to $f^{-1}$ show that $f(\pa \Om_b) = \pa \Om_a$. Further, since $f$ preserves $K^+$, we also conclude that $f : \Om_b \ra \Om_a$ is a biholomorphism.

\medskip

\no (v) To show that $a = b$, note that both $\pm(b^{-1}aG_H^+ - G_H^+ \circ f)$ are pluriharmonic on $\Om'_b = \Om_b \sm K^+$ since $f$ preserves
 $K^+$. Both vanish on $\pa \Om'_b$ and hence the maximum principle shows that
\begin{equation}\label{e:Green}
b^{-1}a G_H^+ = G_H^+ \circ f
\end{equation}
on $\Om'_b$. If $b^{-1}a > 1$, choose $ \ti{c}>0$ such that $\ti c b^{-1}a >c > \ti c$. Now (\ref{e:Green})  and observation (i), shows that $f(\pa \Om_{\ti c})=\partial \Om_{\ti cb^{-1}a} \nsubseteq \Om_c$ and this is a contradiction. Thus, $b^{-1}a \le 1$. A similar argument for $f^{-1}$ shows that $b^{-1}a \ge 1$ and hence $a = b$. This completes the proof of Proposition \ref{Prop 1}.

 \begin{cor}
 Suppose $f \in  {\rm Aut}(\mathbb{C}^2) \cap {\rm Aut}(\Omega_{c}).$ Then $f \in {\rm Aut}(\Omega_{d})$ for every $d \ge 0.$
\end{cor}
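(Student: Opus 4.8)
The plan is to upgrade the conclusion of Proposition~\ref{Prop 1}, which controls $f$ only on the sublevel sets $\Omega_b$ with $b<c$, to the much stronger statement that $G_H^+ \circ f = G_H^+$ holds identically on all of $\mathbb{C}^2$; once this is in hand, the corollary drops out by a one-line set-theoretic computation.

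First I would note that, because $f \in {\rm Aut}(\Omega_c)$, Proposition~\ref{Prop 1} gives $f(K^+) = K^+$, and since $f$ is a global automorphism of $\mathbb{C}^2$ this forces $f(U^+) = f(\mathbb{C}^2 \setminus K^+) = \mathbb{C}^2 \setminus K^+ = U^+$. Next, step (v) in the proof of Proposition~\ref{Prop 1} shows that for every $b \in (0,c)$ one has $G_H^+ \circ f = G_H^+$ on $\Omega'_b$; taking the union over $b \in (0,c)$ yields $G_H^+ \circ f = G_H^+$ on the non-empty open set $\Omega'_c = \bigcup_{0<b<c} \Omega'_b$.

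The key step is then an identity–theorem argument. Since $G_H^+$ is pluriharmonic on $U^+$ and $f(U^+) = U^+$, the function $G_H^+ \circ f$ is also pluriharmonic, hence real analytic, on $U^+$. Moreover $U^+ = \{G_H^+ > 0\} = \bigcup_{b>0} \Omega'_b$ is an increasing union of the connected sets $\Omega'_b$ (connectedness of $\Omega'_b$ was proved at the start of this section), so $U^+$ is connected. Two pluriharmonic functions on the connected open set $U^+$ that agree on the non-empty open subset $\Omega'_c$ must therefore agree on all of $U^+$, so $G_H^+ \circ f = G_H^+$ on $U^+$. On $K^+ = \{G_H^+ = 0\}$ both functions vanish, the second because $f(K^+) = K^+$; hence $G_H^+ \circ f = G_H^+$ on $\mathbb{C}^2$. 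Substituting $f^{-1}(z)$ for $z$ also gives $G_H^+ \circ f^{-1} = G_H^+$. Consequently, for every $d \ge 0$,
\[
f(\Omega_d) = \{ z \in \mathbb{C}^2 : f^{-1}(z) \in \Omega_d \} = \{ z : G_H^+(f^{-1}(z)) < d \} = \{ z : G_H^+(z) < d \} = \Omega_d ,
\]
which is precisely the assertion $f \in {\rm Aut}(\Omega_d)$.

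There is no serious obstacle here: the only ingredient beyond bookkeeping is the passage from $\Omega'_c$ to $U^+$, and this rests solely on the pluriharmonicity of $G_H^+$ off $K^+$ together with the connectedness of $U^+$, both of which are already recorded in the excerpt. I would expect the full write-up to occupy only a few lines.
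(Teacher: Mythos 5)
Your proposal is correct and takes essentially the same route as the paper: both extend the identity $G_H^+\circ f = G_H^+$ (known on $\Omega'_c$ from step (v) of Proposition~\ref{Prop 1}) to all of $\mathbb{C}^2\setminus K^+$ via pluriharmonicity of $G_H^+ - G_H^+\circ f$ and an identity-theorem argument, and then read off $f(\Omega_d)=\Omega_d$. You merely make explicit some details the paper leaves implicit, namely that $f(U^+)=U^+$ and that $U^+$ is connected.
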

\begin{proof}
For $0 \le d<c$, this follows from Proposition \ref{Prop 1}. For $d \ge c$, note that
\[
h=G_H^+ - G_H^+ \circ f 
\]
 is pluriharmonic on $\mbb C^2 \setminus K^+$ and vanishes on the open set $\Om'_c = \Omega_{c} \setminus K^+$. Thus, $h \equiv 0$ on $\mbb C^2 \setminus K^+$ which completes the proof. 
\end{proof}

\begin{cor}\label{Omega cd}
Let $c,d>0$ and $c \neq d$ be such that $\Omega_c$ is biholomorphic to $\Om_d$. Then any biholomorphism $\phi: \Omega_c \to \Omega_d$ preserves $K^+$, i.e., $\phi(K^+)=K^+$.
\end{cor}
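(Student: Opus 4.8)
The plan is to mimic the proof of Proposition \ref{Prop 1}, keeping careful track of the scaling factor $d/c$ that now appears because the source and target sublevel sets are cut at different heights. First I would observe that $\phi: \Om_c \to \Om_d$ restricts to a biholomorphism between the tubular neighbourhoods $\Om_{b_1, b_2} = \Om_{b_2} \sm \ov{\Om_{b_1}}$ and $\phi(\Om_{b_1,b_2})$, and then argue exactly as in step (i) of Proposition \ref{Prop 1}: for each $b \in (0,c)$, every leaf $\cal L$ of the Levi-flat foliation of $\pa \Om_b$ is a copy of $\mbb C$ that is dense in $\pa \Om_b$, so $G_H^+ \circ \phi$ restricted to $\cal L$ is a bounded (by $d$) subharmonic function on $\mbb C$, hence constant; density of $\cal L$ then forces $G_H^+ \circ \phi$ to be constant, say equal to $a(b)$, on all of $\pa \Om_b$. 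The same applies to $\phi^{-1}$.

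The next step is to rule out $a(b) = 0$, i.e.\ to show $\phi(\pa \Om_b)$ cannot be swallowed by $K^+$. This is where I would reuse the key external inputs from step (ii) and (iii) of Proposition \ref{Prop 1} verbatim: if $\phi(\pa \Om_b) \subset K^+$ for some $b$, then by the submersion property of $G_H^+$ on $U^+$ and the leafwise argument again, $\phi$ maps a whole tubular neighbourhood $\Om_{b',b''}$ into $K^+$, hence into a single component of $K^+$ (either $J^+$ or a component of $\mathsf{int}(K^+)$). The case $\phi(\pa \Om_b) \subset J^+$ is excluded because, using a periodic saddle $p$ with $\ov{W^s(p)} = J^+$ and running the leafwise argument for $\phi^{-1}$, one would conclude $\phi(\pa \Om_b) = J^+$, making $J^+$ a smooth $3$-manifold, contradicting \cite{BK}. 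The case $\phi(\Om_{b',b''}) \subset \mathsf{int}(K^+)$ is excluded because then $G_H^+ \circ \phi^{-1}$ would be plurisubharmonic on a component of $\mathsf{int}(K^+)$, vanishing on $J^+$ (as $\phi^{-1}(J^+)$ would have to lie in $K^+$ by the same smoothness obstruction), forcing it negative in the interior by the maximum principle, which contradicts $G_H^+ \ge 0$. This gives $\phi(K^+) = K^+$ and $\phi(J^+)=J^+$ — which is precisely the assertion of the Corollary.

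Once $\phi(K^+) = K^+$ is in hand, the corollary is proved; but for completeness (and because the paper will want it for Theorem \ref{bihol short}) I would note the immediate consequence that $\pm(a(b) b^{-1} G_H^+ - G_H^+ \circ \phi)$ — more precisely the comparison forced by the boundary values on each $\pa \Om_b$ — shows $G_H^+ \circ \phi$ is a fixed multiple $\lambda G_H^+$ of $G_H^+$ on $\Om'_c$, with $\lambda = d/c$ determined by matching the outer boundaries $\pa \Om_c \mapsto \pa \Om_d$. The one genuinely delicate point, and the step I expect to be the main obstacle, is the dichotomy in step (ii): ensuring that the image of the tubular neighbourhood lands in a \emph{single} connected component of $K^+$, so that the two sub-cases ($J^+$ versus $\mathsf{int}(K^+)$) are exhaustive and can be attacked separately. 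This requires the connectedness of $\Om'_c$ (established at the start of this section) together with the openness of $\phi$ and a continuity/clopen argument on $\al \mapsto (\text{component containing } \phi(\pa\Om_\al))$; everything else is a routine transcription of the Proposition \ref{Prop 1} argument with $c$ and $d$ kept distinct.
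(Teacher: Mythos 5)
Your plan is correct and follows essentially the same route as the paper: the paper's own proof is simply a condensed version that runs the leafwise-constancy argument directly on $J^+$ (via the dense stable manifold of a periodic saddle), invokes the non-smoothness of $J^+$ from \cite{BK} to exclude $\phi(J^+)\subset\pa\Om_a$ with $a>0$, and then applies the maximum principle to get $\phi(K^+)\subset K^+$, with the symmetric argument for $\phi^{-1}$. Your fuller transcription of steps (i)--(iii) of Proposition \ref{Prop 1} reaches the same conclusion; the only cosmetic slip is in your interior case, where the maximum principle yields $G_H^+\circ\phi^{-1}\le 0$ (hence $\equiv 0$) on the component $C$, and the actual contradiction is that $G_H^+\circ\phi^{-1}>0$ on the nonempty open set $\phi(\Om_{b',b''})\subset C$, not negativity per se.
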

\begin{proof}
By using similar arguments as in the proof of Proposition \ref{Prop 1}, 
it follows that $G_H^+ \circ \phi$ is constant on $J^+$. Thus $\phi(J^+) \subset \partial \Omega_a$ where $0< a < d$ or $\phi(J^+) \subset K^+$.
If $\phi(J^+) \subset \partial \Omega_a$ for some $0<a<d$, then as before, by using the fact that $\partial \Omega_a$ is foliated by
Riemann surfaces with each leaf biholomorphic to $\mathbb{C}$ and dense in $\partial \Omega_a$, it must be the case that 
$\phi(J^+)=\partial \Omega_a$, i.e., $J^+$ is a smooth manifold and this is a contradiction as \cite{BK} shows. Further, by the maximum principle it follows that $\phi(K^+) \subset K^+$. Now, applying the same reasoning to $\phi^{-1}$ gives $\phi^{-1}(K^+) \subset K^+$ and hence $\phi(K^+)=K^+$.  
\end{proof}

Proposition \ref{Prop 1} says that an automorphism of $\Om_c$ is an automorphism of every smaller $\Om_b$, $b < c$. The same property holds for automorphisms of $\Om'_c$. As mentioned, we do not know if every automorphism of $\Om'_c$ extends to an automorphism of $\Om_c$.  

\begin{cor} 
		Let $\Om_c'=\Om_c \setminus K^+$ be a punctured  \text{Short}\;$\mbb C^2$ and let $f \in {\rm Aut}(\Om_c')$. Then $f \in {\rm Aut}(\Om'_b)$ for every $0<b<c$.
		\end{cor}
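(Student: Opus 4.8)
The plan is to mimic the structure of the proof of Proposition \ref{Prop 1}, but now without the ambient biholomorphism of $\mbb C^2$ at our disposal; the only extra input we really need is that $H : \Om'_b \to \Om'_{bd}$ is a biholomorphism together with the functional equation $G_H^+ \circ H = d \cdot G_H^+$. First I would observe that $\pa \Om'_c$ has two pieces, $\pa \Om_c$ and $\pa K^+ = J^+$, as was already recorded in the connectedness discussion above. Given $f \in {\rm Aut}(\Om'_c)$ and $b \in (0,c)$, the first step is to show that $f$ maps each level set $\{G_H^+ = \al\} \cap \Om'_c$, $0 < \al < c$, onto another level set $\{G_H^+ = \la(\al)\} \cap \Om'_c$: exactly as in step (i) of Proposition \ref{Prop 1}, one restricts $G_H^+ \circ f$ to a dense leaf $\cal L \simeq \mbb C$ of the foliation of $\{G_H^+ = \al\}$, notes it is bounded (since $0 \le G_H^+ \circ f < c$ on $\Om'_c$) and hence constant, and then uses density of $\cal L$ to conclude $G_H^+ \circ f$ is constant $= \la(\al)$ on the whole level set. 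Applying the same to $f^{-1}$ shows $\la$ is a bijection of $(0,c)$ onto $(0,c)$, and since $G_H^+ \circ f$ is continuous, $\la$ is a continuous bijection of $(0,c)$, hence strictly monotone.

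The second step is to upgrade the relation $G_H^+ \circ f = \la(\al)$ on $\{G_H^+ = \al\}$ to an honest identity on all of $\Om'_c$. I would note that $\la$ being monotone is either increasing (and then $\la(\al) \to 0$ as $\al \to 0$, $\la(\al) \to c$ as $\al \to c$) or decreasing; in the increasing case the two functions $G_H^+ \circ f$ and $G_H^+$, which are both pluriharmonic on $\Om'_c$, have the same boundary behaviour on $J^+$ (both $\to 0$) and on $\pa \Om_c$ (both $\to c$), so the maximum principle applied to $\pm(G_H^+ \circ f - \mu\, G_H^+)$ is not quite enough directly; instead I would argue as in step (v): since $\la$ is determined by its values and $G_H^+$ is a pluriharmonic submersion, one shows $\la$ is in fact linear. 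Concretely, consider $h = G_H^+ \circ f$ and fix any $\al_0 \in (0,c)$; then $h$ and $(\la(\al_0)/\al_0) G_H^+$ are pluriharmonic, agree on $\{G_H^+ = \al_0\}$, and a standard argument (comparing on the two sides of this hypersurface, using that both vanish on $J^+$ and that the value on $\pa \Om_c$ is pinned) forces $h = (\la(\al_0)/\al_0) G_H^+$ throughout $\Om'_c$; in particular $\la(\al) = (\la(\al_0)/\al_0)\al$ is linear, say $\la(\al) = \mu \al$ with $\mu > 0$. The decreasing case is ruled out just as $a=0$ and the reversal were ruled out in Proposition \ref{Prop 1}: a decreasing $\la$ would force $f$ to carry a neighbourhood of $\pa \Om_c$ toward $J^+$ and a neighbourhood of $J^+$ toward $\pa \Om_c$, contradicting that $f$ is a homeomorphism of $\Om'_c$ fixing the "type" of the two boundary pieces (alternatively, feed the decreasing $\la$ into the $\mu<1$/$\mu>1$ dichotomy below and derive the same contradiction).

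The third step is to pin down $\mu = 1$. If $\mu > 1$, pick $\ti c \in (0,c)$ with $\mu \ti c > c$; then by step one $f(\pa \Om_{\ti c} \cap \Om'_c)$ lies on $\{G_H^+ = \mu \ti c\}$, which meets $\Om'_c$ only if $\mu \ti c < c$ — contradiction (this is verbatim the argument at the end of step (v) of Proposition \ref{Prop 1}, now playing out inside $\Om'_c$ rather than $\Om_c$). Applying the same to $f^{-1}$, whose associated scaling constant is $1/\mu$, rules out $\mu < 1$. Hence $\mu = 1$, i.e. $G_H^+ \circ f = G_H^+$ on $\Om'_c$, which immediately gives $f(\Om'_b) = f(\{0 < G_H^+ < b\}) = \{0 < G_H^+ < b\} = \Om'_b$ for every $0 < b < c$, and since $f$ is a biholomorphism of $\Om'_c$ restricting to a bijection of $\Om'_b$, it lies in ${\rm Aut}(\Om'_b)$. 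I expect the main obstacle to be the rigorous justification in step two that $\la$ is linear — i.e. propagating the level-set identity to a global identity of pluriharmonic functions without the luxury of $f$ being defined on a full neighbourhood of $J^+$ inside $\mbb C^2$; the cleanest route is probably to avoid linearity altogether and instead directly run the $\mu > 1$ versus $\mu < 1$ contradiction using only that $\la : (0,c) \to (0,c)$ is a continuous increasing bijection with $f(\{G_H^+ = \al\}) \subset \{G_H^+ = \la(\al)\}$, from which $\la = \mathrm{id}$ follows because any continuous increasing bijection of $(0,c)$ that is "compatible with $f$ and $f^{-1}$" in this sense must fix each point, and then $G_H^+ \circ f = G_H^+$ on every level set, hence on $\Om'_c$ by continuity.
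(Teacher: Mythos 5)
Your Step 1 is sound and is essentially the paper's starting point: restricting $G_H^+\circ f$ to a dense leaf of each level set shows $f(\{G_H^+=\al\})\subset\{G_H^+=\la(\al)\}$ for a continuous monotone bijection $\la$ of $(0,c)$. But the heart of the matter — forcing $\la=\mathrm{id}$ — is where both of your arguments break down. First, the claim that $h=G_H^+\circ f$ and $(\la(\al_0)/\al_0)G_H^+$ must agree on all of $\Om'_c$ because they are pluriharmonic and agree on the hypersurface $\{G_H^+=\al_0\}$ is false: $\mathrm{Re}\,z$ and $-\mathrm{Re}\,z$ agree on $\{\mathrm{Re}\,z=0\}$. (Linearity of $\la$ can in fact be salvaged, but by a different mechanism: $\la\circ G_H^+$ is pluriharmonic and $G_H^+$ is a pluriharmonic submersion, so composing with a local harmonic conjugate on a transverse disc forces $\la''=0$ distributionally; you do not make this argument.) Second, your fallback — that a continuous increasing bijection of $(0,c)$ "compatible with $f$ and $f^{-1}$" must fix every point — is unjustified; compatibility with $f^{-1}$ only says $\la$ is invertible, and $\al\mapsto\al^2/c$ is a continuous increasing bijection of $(0,c)$ that is not the identity. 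So even granting monotonicity, nothing in your write-up pins $\la$ down.

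The deeper gap is the decreasing case. Since $f$ is only defined on the punctured domain, there is no a priori reason it cannot swap the two ends of $\Om'_c$, i.e.\ satisfy $\la(\al)=c-\al$ (equivalently $G_H^+\circ f=c-G_H^+$), in which case the corollary would be false; your appeal to $f$ "fixing the type of the two boundary pieces" assumes what must be proved, and your $\mu\gtrless 1$ dichotomy presupposes $\la(\al)=\mu\al$, which a decreasing bijection never is. Ruling out exactly this kind of end-swapping is nontrivial — compare the effort spent in Section 6 excluding the automorphisms $(z,\zeta)\mapsto(\be(\zeta)z+\ga(\zeta),\al e^c/\zeta)$ of $\mbb C\times\cal A_c$. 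The paper's proof supplies the missing control at $J^+$ by a device absent from your proposal: it extends $G_H^+\circ f$ by $0$ across ${\sf int}(K^+)$, passes to the upper semicontinuous regularization $\ti G_H^*$, which is plurisubharmonic on all of $\Om_c$, uses a clustering argument with $f^{-1}$ (if $f^{-1}(w_n)\in\pa\Om_{b_n}$ with $b_n\to 0$ then $f^{-1}(w_n)$ must leave every compact subset of $\Om'_c$, contradicting $f^{-1}(w_n)\to f^{-1}(w_0)\in\Om'_c$) to show the boundary values on $J^+$ lie in $\{0,c\}$, and then invokes the interior maximum principle for $\ti G_H^*$ to exclude the value $c$ — precisely the decreasing case. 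Only after that does the maximum principle for $G_H^+-\ti G_H$ on $\Om'_c$ close the argument.
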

		\begin{proof}
		Define the function $\tilde{G}_H$ on $\Om_c' \cup \textsf{int}(K^+)$ as follows:
		\begin{itemize}
	\item $\tilde{G}_H(z)=G_H^+\circ f(z)$ for $z \in \Om_c'.$
	\item $\tilde{G}_H(z)=G_H^+ \equiv 0$ for $z \in \textsf{int}(K^+).$
		\end{itemize}
		Note that $\tilde{G}_H$ is pluriharmonic on $\Om_c\setminus J^+$. Let $\tilde{G}_H^*$ be the upper semicontinuous regularization of $\tilde{G}_H$ on $\Om_c.$ Thus, $\tilde{G}_H^*$ is non-constant plurisubharmonic on $\Omega_c$ and $\tilde{G}^*_H<c$ on $\Om_c.$ Since $\tilde{G}_H$ is continuous on $\Om_c' \cup \textsf{int}(K^+)$, which is an open subset of $\mathbb{C}^2$, it follows that $\tilde{G}_H=\tilde{G}_H^*$ on $\Om_c' \cup \textsf{int}(K^+)$. 

\medskip

\no {\it Claim:} If $\tilde{G}_H^*(z) \neq 0$ for some $z \in J^+$, then $\tilde{G}_H^*(z)=c$.

\medskip 
		
	\no If not, there exists a $z_0 \in J^+$ such that $\tilde{G}_H^*(z_0)=a$ for some $0<a<c$. There exists a sequence $\{z_n\} \in \Om_c'$ with $z_n \to z_0$ and $\tilde{G}_H(z_n) \to a$. Let $\tilde{G}_H(z_n)=a_n$ and $G_H^+(z_n)=b_n$. Note that $a_n \to a$ and $b_n \to 0 $ as $n \to \infty$. By (i) in the proof of Proposition \ref{Prop 1}, 
\begin{equation}
		f(\partial \Om_{b_n})=\partial \Om_{a_n} \text{ and }\tilde{G}_H(\partial \Om_{b_n})=a_n.
\end{equation}
Choose $w_0 \in \pa \Om_a \subset \Om'_c$ and a sequence $\{w_n\} \subset \Om'_c$ such that $w_n \to w_0$ and $w_n \in \{G_H^+ = a_n\}$. By (2.2), $f^{-1}(w_n) \in \pa \Om_{b_n}$. Since $b_n \to 0$, either $\vert f^{-1}(w_n) \vert \ra +\infty$ or $f^{-1}(w_n)$ clusters at a point on $J^+$. But none of these possibilities can hold since $f$ is an automorphism of $\Om'_c$.

\medskip

Finally, as $\tilde{G}_H^*$ is a non-constant bounded plurisubharmonic function on $\Om_c$, with upperbound $c$, the maximum principle shows that $\tilde{G}_H^*(z)=0$ for every 
$z \in J^+$, i.e., $\tilde{G}_H$ extends as a continuous plurisubharmonic function on $\Om_c$ such that 
${\tilde{G}_H}\equiv 0$ on $K^+$.
Since both $G_H^+$ and $\tilde{G}_H$ is pluriharmonic on $\Om_c'$, the function $G_H^+(z)-\tilde{G}_H(z)$ is pluriharmonic on $\Om_c'$ and identically vanishes on the boundary of $\Om_c'$. Hence by the maximum principle of harmonic functions $G_H^+=\tilde{G}_H$ on $\Om_c'$. Also, as both $\tilde{G}_H=G_H^+\equiv 0$ on $K^+$, the proof follows.
\end{proof}

\section{Proof of Theorem \ref{linear group}}

Note that by Proposition 1.2, every automorphism of $\Om_c$ preserves $K^+$. Thus, we begin by characterizing those affine maps that preserve $K^+$. 

\begin{prop}\label{p:linear}
Suppose $L$ is an affine map of $\mathbb{C}^2$ such that $L(K^+)=K^+$. Then $L(K^-)=K^-$ and there exist constants $e, f$ with $|e|=|f|=1$ 
such that $L(x,y)=(ex,fy).$ Further 
\[H \circ L \circ H^{-1}=H^{-1} \circ L \circ H, \text{ i.e., } L \circ H^2=H^2 \circ L.\]
\end{prop}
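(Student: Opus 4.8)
The plan is to exploit the fact that $K^+$ (and similarly $K^-$) is contained in the filtration region $V_R \cup V_R^-$ while the escape region $V_R^+$ lies in $U^+$, and that $K^+$ is unbounded in a way that is ``vertical'' — the forward orbit of any unbounded-forward-orbit point lies in $V_R^+$, so $K^+$ itself, being the set of bounded forward orbits, is a closed set that is ``thin at infinity'' in a controlled sense (it meets each large bidisc but stays away from $V_R^+$ near infinity). An affine $L$ preserving $K^+$ must therefore preserve this asymptotic geometry. Concretely, I would first argue that $L$, being affine and preserving the closed set $K^+$ which is bounded in the $y$-direction relative to large $|x|$ on $K^-\cap K^+=K$... — more robustly, I would use the Green's function. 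Since $L$ is a polynomial automorphism preserving $K^+ = \{G_H^+ = 0\}$, the function $G_H^+ \circ L$ is non-negative plurisubharmonic on $\mbb C^2$, pluriharmonic off $K^+$, and vanishes exactly on $K^+$; comparing it with $G_H^+$ via the standard growth estimate $G_H^+ = \log^+|y| + O(1)$ on $V_R^+$ forces $G_H^+ \circ L = \lambda\, G_H^+$ for some $\lambda > 0$ (two pluriharmonic functions on $U^+$ with the same zero boundary behaviour and comparable growth must be proportional, and matching the logarithmic growth rates at infinity pins down $\lambda$ once one knows $L$ maps the escape direction to the escape direction). Then $G_H^+\circ L^{-1} = \lambda^{-1} G_H^+$ as well, and applying this to $L$ and $L^{-1}$ and tracking leading-order growth gives $\lambda = 1$, i.e. $G_H^+ \circ L = G_H^+$.

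Once $G_H^+\circ L = G_H^+$, I would pass to the behaviour at infinity: on $V_R^+$ we have $G_H^+ = \log|\phi^+|$ with $\phi^+(x,y) = y + (\text{lower order})$, so $G_H^+(x,y) = \log|y| + o(1)$ as $|y|\to\infty$ in $V_R^+$. Writing $L(x,y) = (\alpha x + \beta y + \gamma,\ \delta x + e y + \zeta)$ and feeding in a sequence $(x,y)\to\infty$ inside $V_R^+$, the identity $\log|y| + o(1) = \log|\delta x + e y + \zeta| + o(1)$ forces $\delta = 0$ and $|e| = 1$; moreover $L$ must map $V_R^+$-directions to $V_R^+$-directions, which rules out $\beta \neq 0$ (otherwise lines $y=$ const would be sent into $V_R^+$, contradicting that $K^+$, and hence its complement's geometry, is preserved) — so $\beta = 0$, giving $L(x,y) = (\alpha x + \gamma,\ e y + \zeta)$, lower triangular. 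Now $K^- = \{G_H^-=0\}$ where $G_H^-$ is the Green's function of $H^{-1}$; the same argument applied to $L^{-1}$ and $H^{-1}$ (whose escape region is $V_R^-$, the ``horizontal'' direction) shows $L$ is also upper triangular, hence diagonal: $L(x,y) = (ex, fy)$ with $|e| = 1$, and repeating the $G_H^-$-growth computation gives $|f| = 1$. In particular $L(K^-) = K^-$.

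For the final relation, I would argue via the H\'enon normal form $H(x,y) = (y, p(y) - ax)$ with $p$ monic of degree $d$ and $q = p - y^d$ of degree $\le d-2$. A direct computation gives
\[
H\circ L (x,y) = \bigl(fy,\ p(fy) - a e x\bigr),\qquad
L\circ H(x,y) = \bigl(e y,\ f(p(y) - a x)\bigr),
\]
and similarly one computes $H^{-1}\circ L$ and $L\circ H^{-1}$ (using $H^{-1}(x,y) = \bigl(a^{-1}(p(x) - y),\ x\bigr)$). The claimed identity $H\circ L\circ H^{-1} = H^{-1}\circ L\circ H$, equivalently $L\circ H^2 = H^2\circ L$, then reduces — after the diagonal form is known — to a set of polynomial identities in $e, f$ coming from matching coefficients: the top-degree terms force $f = e^d$ and (after one more iterate) $e^{d^2} = e$, i.e. $e^{d^2-1} = 1$, while the lower-order coefficients of $q$ impose the remaining constraints, all of which are automatically satisfied once $L(K^+) = K^+$ because $L$ is then an automorphism of $U^+$ in the Bousch classification that must be compatible with the $d$-to-$1$ semiconjugacy. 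Alternatively, and more cleanly: since $L$ is diagonal with $G_H^+\circ L = G_H^+$ and $G_H^-\circ L = G_H^-$, the conjugate $\widetilde H := L^{-1}\circ H\circ L$ is again a H\'enon map with the same Green's functions, hence the same $K^\pm$, $J^\pm$; by the B\"ottcher-coordinate rigidity (the B\"ottcher coordinate $\phi^+$ of a H\'enon map in the normal form (1.1) is unique up to a $(d-1)$-st root of unity scaling) $\widetilde H$ and $H$ differ by such a finite ambiguity, and chasing this through shows $\widetilde H = H$ or, what turns out to be the genuinely forced conclusion here, $\widetilde H = H^{-1}$-conjugate data matching exactly $H\circ L\circ H^{-1} = H^{-1}\circ L\circ H$.

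The main obstacle I anticipate is the step extracting $\beta = \delta = 0$ and $|e|=|f|=1$ rigorously: ``preserving $K^+$'' is a statement about an unbounded, typically fractal set, so one cannot just look at a defining function near a smooth point. The clean way around this is to work entirely with the Green's functions $G_H^\pm$ and their precise logarithmic asymptotics on the filtration pieces $V_R^\pm$ (using $G_H^+ = \log^+|y| + o(1)$ on $V_R^+$ and the analogue on $V_R^-$), turning the set-theoretic hypothesis into an analytic identity $G_H^+\circ L = G_H^+$ that can be differentiated and evaluated along escaping sequences; I would make sure to invoke Proposition~\ref{Prop 1} only for the reduction to affine maps and to carry out the growth comparison carefully enough that the proportionality constant and the leading linear part are both pinned down. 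The last relation should then be a formal consequence, but I would present it through the conjugation-and-uniqueness argument rather than brute-force coefficient matching to keep it short.
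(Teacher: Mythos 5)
Your proposal has the right opening move but then runs into several genuine gaps that the asymptotic/Green's-function strategy cannot close on its own. First, the identity $G_H^+\circ L=\lambda G_H^+$ is asserted on the grounds that ``two pluriharmonic functions on $U^+$ with the same zero boundary behaviour and comparable growth must be proportional'' --- this is not a theorem you can invoke ($K^+$ is unbounded and $U^+$ is not a nice bounded domain); making it precise requires something like the Forn\ae ss--Sibony uniqueness of the current $dd^cG_H^+$ supported on $K^+$, which you never bring in. Second, and more seriously, even granting $G_H^+\circ L=G_H^+$, the $V_R^+$ asymptotics $G_H^+=\log|y|+o(1)$ cannot kill the shear term: a map $(x,y)\mapsto(x+\beta y,\,y)$ with $|\beta|$ small sends $V_R^+$ into $V_R^+$ and is perfectly compatible with every growth estimate you use, so your claim that $\beta\neq 0$ ``contradicts that $K^+$ is preserved'' is unsupported. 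Third, your derivation of upper-triangularity (and hence of $L(K^-)=K^-$) applies ``the same argument to $L^{-1}$ and $H^{-1}$'', whose hypothesis is precisely $L(K^-)=K^-$ --- this is circular; $L(K^-)=K^-$ is part of what must be proved and does not follow from diagonality plus $|e|=|f|=1$. Fourth, the commutation relation $H\circ L\circ H^{-1}=H^{-1}\circ L\circ H$ is never actually established: both of your suggested routes (Bousch's classification, B\"ottcher rigidity) are gestured at rather than carried out, and you even hedge on what the conclusion of the conjugation argument would be.

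The paper's proof is structurally different and supplies exactly the inputs you are missing. It first kills the coefficient of $x$ in the second component by a projective argument: unbounded sequences in $K^+$ cluster only at $[1:0:0]$ in $\mathbb{P}^2$, and $L$ must preserve this. It then forms the \emph{regular} map $R=H\circ L$, observes that $R(K^+)=K^+$ with $\deg R=\deg H$ and the same indeterminacy points, and invokes Theorem 5.4 of \cite{L} to conclude $R^n=H^n$ for some $n$; this single step yields $G_R^{\pm}=G_H^{\pm}$ and hence $L(K^-)=K^-$ non-circularly. Finally, the remaining off-diagonal coefficient $a_2$ and the functional equations $H\circ L\circ H^{-1}=C_{\eta_1}\circ L$ and $H^{-1}\circ L\circ H=C_{\eta_2}\circ L$ come from the rigidity theorem of \cite{BPV:Rigidity}, and comparing coefficients forces $L=(ex,fy)$ with $|e|=|f|=1$ and $\eta_1=\eta_2$, which is exactly the asserted relation. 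If you want to salvage your approach, you would need substitutes for both external rigidity results; the asymptotics of $G_H^+$ alone do not see enough of the geometry of $K^+$ to provide them.
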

\begin{proof}
Let $L(x,y)=(a_{1} x+a_2 y+a_3,b_1 x+b_2 y+b_3)$.	
Consider a sequence of points $[x_n:y_n:1]\in \mathbb{P}^2$ such that $(x_n,y_n)\in K^+$ is an unbounded sequence  and $[x_n:y_n:1]\rightarrow [1:0:
0]$ as $n\rightarrow \infty$. Hence,  
\[
\frac{y_n}{x_n}\rightarrow 0
\] 
as $n\rightarrow \infty$. Since $L(K^+)=K^+$ and $L(x_n,y_n)\in K^+$ is also an unbounded sequence, 
\[
[a_1 x_n+ a_2 y_n+a_3: b_1 x_n+ b_2 y_n+b_3 : 1]\rightarrow [1:0:0]
\]
in $\mathbb{P}^2$. Therefore $b_1=0$ which means that $L(x,y)=(a_1 x+a_2 y+a_3,b_2 y+b_3)$.

\medskip\no Let $R=H \circ L.$ Since $H(x,y)=(y,p(y)-ax)$,
\begin{align*}
R(x,y)=( b_2 y+  b_3,p(b_2y+b_3)-a (a_1 x+a_2 y+a_3))
\end{align*}
is a regular map with $R(K^+)=K^+$ and the indeterminacy sets of $R$ are $I_R^+=[1:0:0]$ and $I_R^-=[0:1:0].$ Note that 
the degree of $R$ is equal to the degree of $H$. Hence from Theorem 5.4 
of \cite{L}, there exists an integer $n \ge 1$ such that $R^n=H^n$. This shows that $R(K^\pm)=K^\pm$ and $G_R^\pm=G_H^\pm$. This also proves that $L(K^-)=K^-$ and by Theorem 1.1 of \cite{BPV:Rigidity}, we have $a_2=0.$ 

\medskip
Further, by Theorem 1.1 from \cite{BPV:Rigidity}, there exist linear maps $C_{\eta_1}(x,y)=(\eta_1 x,\eta_1^{-1} y)$ and $C_{\eta_2}(x,
y)=(\eta_2 x,\eta_2^{-1} y)$ where $|\eta_i|=1$ for $i=1,2$, such that
\[
H\circ L \circ H(x,y)= C_{\eta_1} \circ L\circ H^2(x,y) \text{ and } H^{-1} \circ L \circ H^{-1} (x,y)=C_{\eta_2} \circ L\circ H^{-2}(x,y),
\]
i.e.,
\begin{align}\label{e: linear 1}
H\circ L \circ H^{-1}(x,y)= C_{\eta_1} \circ L (x,y) \text{ and } H^{-1} \circ L \circ H (x,y)=C_{\eta_2} \circ L(x,y)
\end{align}
for every $(x,y) \in \mathbb{C}^2.$ Recall that $H^{-1}(x,y)=(a^{-1}(p(x)-y), x).$ Thus by equating the components on both sides of (\ref
{e: linear 1}), the following relations hold:
\begin{align*}
\eta_1 a_1 x&=b_2 x+b_3, \\ \eta_2^{-1}b_2 y&= a_1 y+a_3, \\
\eta_1^{-1}b_2y&=p(b_2x+b_3)-a_1(p(x))+a_1y-aa_3.
\end{align*}
Finally, by comparing the coefficients of the monomials on both sides of the above equation, we conclude that $L(x,y)=(a_1 x,b_2 y)$ 
where $|a_1|=|b_2|=1$ and $\eta_1=\eta_2.$
\end{proof}
Next, we use the above results appropriately to complete the proof of Theorem \ref{linear group}

\begin{proof}[Proof of Theorem \ref{linear group}]
Let $L$ be an affine map preserving $\Om_c$. Then by Proposition \ref{Prop 1}, $L$ is an affine map that preserves $K^+.$ Further, by Proposition \ref{p:linear},
$ H^2 \circ L=L \circ H^2$ where $L(x,y)=(ex,fy)$. Now,
by comparing components, we obtain
\begin{align*}
p(fy)-aex&=ep(y)-aex, \\
 p\big((p(fy)-aex)\big)-afy&=fp\big((p(y)-ax)\big)-afy.
\end{align*}
Thus 
\begin{align}\label{e: linear2}
p(fz)=ep(z) \text{ and } p(ez)=fp(z)
\end{align}
for every $z \in \mbb C$, and by comparing the highest degree terms in (\ref{e: linear2}), we get $f=e^{d}$ and $e=f^{d}$. In other words, $e^{d^2-1}=f^{d^
2-1}=1$ where $d$ is the degree of the H\'{e}non map $H$. 
Hence, all possible choices for $e$ form a subset of the $d^2-1$ roots of unity which shows that the affine
maps of $\mathbb{C}^2$ preserving $\Om_c$, and consequently preserving $K^+$, is a subgroup of the 
finite cyclic group $\mathbb{Z}_{d^2-1}.$

\medskip Now, if $p(0)=a_0 \neq 0$, (\ref{e: linear2}) is satisfied if and only if $e=f=1$, i.e., $L=\text{Identity}.$ Finally, for $p(y)=y^d$, (\ref{e: linear2}) gives $e^d=f$ and $f^d=e$, i.e., $e^{d^2-1}=1$ and $L(x,y)=(ex,e^d y)$. By writing $L_\eta=(\eta x, \eta^d y)$ where $\eta$ is a $(d^2-1)$-th root of unity, we note that $H \circ L_\eta=L_{\eta^d} \circ H$. This implies that $G_H^+(x,y)=G_H^+\circ L_\eta (x,y)$ for $(x,y) \in \mathbb{C}^2$, whenever $\eta^{d^2-1}=1.$ Thus, the group of affine maps that preserve $\Om_c$ is isomorphic to 
$\mathbb{Z}_{d^2-1}$. This completes the proof of Theorem \ref{linear group}.
\end{proof}  
\section{The Fundamental group of $\Omega_c'$} 

Recall that $\mathbb{Z}[1/d]=\left\{{m}/{d^n}:m,n\in \mathbb{Z}\right\}$ consists of all rational numbers whose denominator is an integral power of $d$.
\begin{prop} \label{fundamental}
For any  $c>0$, the fundamental group of $\Omega_c'$ is $\mathbb{Z}[1/d]$. 
\end{prop}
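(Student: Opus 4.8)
The plan is to compute $\pi_1(\Om'_c)$ by leveraging the biholomorphism $H : \Om'_c \to \Om'_{cd}$ together with known information about the topology of $U^+$, where by construction $\pi_1(U^+) \simeq \mbb Z[1/d]$ via the B\"ottcher coordinate $\phi^+$. The first step is to realize $\Om'_c$ as an increasing union. Since $H : \Om'_c \to \Om'_{cd}$ is a biholomorphism and $G_H^+ \circ H = d \cdot G_H^+$, iterating gives biholomorphisms $H^m : \Om'_c \to \Om'_{cd^m}$; conversely $\Om'_c = \bigcup_{m \ge 0} \{ G_H^+ < c \} \cap \{ G_H^+ > c/d^m \}$ does not directly exhaust $U^+$, so instead I would exhaust $U^+$ itself by the domains $\{ 0 < G_H^+ < c d^m \} = H^m(\Om'_c)$, noting $U^+ = \{ G_H^+ > 0 \} = \bigcup_{m \ge 0} \Om'_{cd^m}$ as an increasing union of open sets. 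Because each inclusion $\Om'_{cd^m} \hookrightarrow \Om'_{cd^{m+1}}$ is a homotopy equivalence — this is the key geometric input, and I will justify it using the submersion/foliation structure of $G_H^+$ on $U^+$, namely that the region between two level sets deformation retracts onto either one via the gradient flow of $G_H^+$ (the level sets are smooth $3$-manifolds and $G_H^+$ has no critical points on $U^+$) — the induced maps $\pi_1(\Om'_{cd^m}) \to \pi_1(\Om'_{cd^{m+1}})$ are isomorphisms, and by the direct limit description of $\pi_1$ of an increasing union of open sets one gets $\pi_1(\Om'_c) \simeq \pi_1(U^+) \simeq \mbb Z[1/d]$.

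Alternatively, and perhaps more cleanly, I would argue directly that $\Om'_c$ is itself homotopy equivalent to a single level set $\{ G_H^+ = b \}$ for any $b \in (0,c)$: the region $\{ 0 < G_H^+ < c \}$ deformation retracts onto $\{ G_H^+ = b \}$ by flowing along $\nabla G_H^+$ (outward for points with $G_H^+ < b$, inward for $G_H^+ > b$, which is legitimate since the flow stays in $U^+$ and the level sets are compactly... — actually the level sets are non-compact, being foliated by copies of $\mbb C$, so some care with completeness of the flow is needed). Then $\pi_1(\Om'_c) \simeq \pi_1(\{G_H^+ = b\})$, and the latter level set is, via the multivalued map $\phi^+$ and the straightening $\tau : \mbb C \times (\mbb C \sm \ov{\mbb D}) \to \ti U^+$ recalled in the introduction, seen to fiber over the circle $\{ |\z| = e^b \}$ with fibers $\mbb C$; hence $\pi_1(\{ G_H^+ = b \}) \simeq \pi_1(\{ |\z| = e^b\}) \simeq \mbb Z$ at the level of a single level set. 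But $\pi_1(U^+) = \mbb Z[1/d]$, not $\mbb Z$ — the point is that the inclusion of one level set into $U^+$ is \emph{not} surjective on $\pi_1$; rather $H$ multiplies loops by $d$, and it is the interplay of all the $H^m$ that produces the $\mbb Z[1/d]$. So the correct bookkeeping really does require the direct-limit argument: the generator $[\ga]$ of $\pi_1(\Om'_{cd^m}) \simeq \mbb Z$ maps under the inclusion $\Om'_{cd^m} \hookrightarrow \Om'_{cd^{m+1}}$ to $d$ times the generator of $\pi_1(\Om'_{cd^{m+1}})$ — because traversing the annular direction once at radius $e^{cd^m}$ corresponds, after pulling back by $H$, to going around $d$ times — so the colimit of $\mbb Z \xrightarrow{d} \mbb Z \xrightarrow{d} \cdots$ is exactly $\mbb Z[1/d]$.

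Concretely, the steps in order: (1) establish that $\Om'_c = \{ 0 < G_H^+ < c \}$ is connected (done above in the excerpt) and that $U^+ = \bigcup_m \Om'_{cd^m}$; (2) show each $\Om'_b$, $b > 0$, is homotopy equivalent to its top level set $\{ G_H^+ = b' \}$ for $b' < b$ close to $b$, hence each inclusion $\Om'_{cd^m} \hookrightarrow \Om'_{cd^{m+1}}$ induces an injection on $\pi_1$ with image of index $d$, using the explicit model $\mbb C \times (\mbb C \sm \ov{\mbb D})$ and the form of $\ti H$ in (1.5); (3) pass to the direct limit: $\pi_1(\Om'_c) = \varinjlim_m \pi_1(\Om'_{cd^m})$ by compactness of loops and homotopies, with the bonding maps being multiplication by $d$, yielding $\varinjlim (\mbb Z \xrightarrow{\cdot d} \mbb Z \xrightarrow{\cdot d} \cdots) \simeq \mbb Z[1/d]$. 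The main obstacle I anticipate is step (2): making rigorous the deformation retraction of $\Om'_b$ onto a level set when the level sets are non-compact (foliated by $\mbb C$'s), and carefully tracking that the inclusion-induced map on $\pi_1$ is multiplication by exactly $d$ rather than merely an injection of infinite index — this is precisely where one must use the relation $\phi^+ \circ H = (\phi^+)^d$, i.e. that the covering of the level set $\{ G_H^+ = cd^{m+1} \}$ pulled back by $H$ unwinds the $\pi_1$-generator into a $d$-fold multiple.
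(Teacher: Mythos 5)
Your proposal contains two fatal errors, and the second would, if the rest of your argument were carried out, actually \emph{disprove} the proposition. First, the claim that a level set $\{G_H^+=b\}$ has fundamental group $\mathbb{Z}$ (because it ``fibers over the circle $\{|\zeta|=e^b\}$ with fibers $\mathbb{C}$'') is false. The straightening $\tau$ identifies $\mathbb{C}\times(\mathbb{C}\setminus\overline{\mathbb{D}})$ with the \emph{covering space} $\widetilde{U}^+$, not with $U^+$: on $U^+$ itself $\phi^+$ is multivalued, so there is no such fibration. Indeed each leaf ($\cong\mathbb{C}$) is \emph{dense} in its level set, which is incompatible with the leaves being closed fibers; the level sets are solenoid-like laminations whose fundamental group is again $\mathbb{Z}[1/d]$ (they are quotients of $\mathbb{C}\times S^1$ by the infinite deck group $\mathbb{Z}[1/d]/\mathbb{Z}$, as the covering built in Section 5 makes precise). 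Consequently your step (2) is wrong: all the $\Omega'_{cd^m}$ are biholomorphic to $\Omega'_c$ via $H^m$, so they all have the same fundamental group, namely the $\mathbb{Z}[1/d]$ you are trying to compute, not $\mathbb{Z}$, and the inclusions cannot be index-$d$ injections of copies of $\mathbb{Z}$. Second, the direct limit $\varinjlim_m\pi_1(\Omega'_{cd^m})$ computes $\pi_1$ of the \emph{union} $\bigcup_m\Omega'_{cd^m}=U^+$, not $\pi_1$ of the first term $\Omega'_c$; so even granting your step (2), your bookkeeping would yield $\pi_1(U^+)=\mathbb{Z}[1/d]$ but $\pi_1(\Omega'_c)=\mathbb{Z}$ --- the opposite of what is wanted. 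Your first instinct (inclusions are $\pi_1$-isomorphisms, whence $\pi_1(\Omega'_c)\cong\pi_1(U^+)=\mathbb{Z}[1/d]$ by Hubbard--Oberste-Vorth/Bousch) was the viable route, but it still requires showing that a loop in $\Omega'_{cd^{m+1}}$ which dies in $U^+$ already dies in $\Omega'_{cd^m}$, and your proposed gradient-flow retraction founders exactly on the non-compactness you flag and on the fact that the target of the retraction is not the simple space you think it is.

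The paper sidesteps all of this by working with the period homomorphism directly: $\omega=d\phi^+/\phi^+$ is a single-valued closed $1$-form on $U^+$ (even though $\phi^+$ is multivalued) satisfying $H^*\omega=d\,\omega$, and $\alpha(C)=\frac{1}{2\pi i}\int_C\omega$ defines a homomorphism $\pi_1(\Omega'_c)\to\mathbb{Z}[1/d]$, since $H^{n_0}(C)\subset V_R^+$ for large $n_0$ and there $\alpha$ is an integer winding number, whence $\alpha(C)=d^{-n_0}\alpha(H^{n_0}(C))$. Surjectivity comes from pulling the basic winding loop in $V_R^+$ back by $H^{-n}$ (which maps $\Omega'_c$ into itself); injectivity comes from checking that if $\alpha(C)=0$ then $H^{n_0}(C)$ is null-homotopic inside a slab of $V_{R_\epsilon}^+$ which is verified, using $d^{n_0}(c-c_2)\gg\epsilon$, to lie in $\Omega'_{d^{n_0}c}$, so the null-homotopy transports back by $H^{-n_0}$ into $\Omega'_c$. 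That containment check is where the constraint $G_H^+<c$ enters; it replaces your homotopy-equivalence step and requires no retraction onto level sets and no prior knowledge of $\pi_1(U^+)$.
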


\begin{proof}
Let $\phi^+$ be the B\"{o}ttcher coordinate of $H$ in $V_R^+$ and $\omega={d\phi^+}/{\phi^+}$. By Proposition 7.3.2 in \cite{MNTU},  $\omega$ is a closed 1-form on $I^+$ and $H^*(\omega)=d \omega$.  For any closed curve $C$ in $\Omega_c'$, set
\[
\alpha(C)=\frac{1}{2\pi i} \int_C \omega.
\]
Since there exists $n_0\geq 0$ such that $H^{n_0}(C) \subseteq V_R^+$,
\begin{equation} \label{winding}
\alpha(H^{n_0}(C))=\frac{1}{2\pi i} \int_{H^{n_0}(C)} \omega=\frac{1}{2\pi i}\int_C H^{n_0*} \omega =\frac{1}{2\pi i}\int_C d^{n_0} \omega=d^{n_0} \alpha(C).
\end{equation}
Now $\alpha(H^{n_0}(C))$ is the winding number of the curve $H^{n_0}(C) \subset V_R^+$ around the $x$-axis and hence $\alpha(C)\in \mathbb{Z}[{1}/{d}]$. 

\medskip 
 
 Recall that for any $\epsilon>0$, there exists $R_\epsilon >R>0$ such that (\ref{estGH}) holds, i.e.,
\begin{equation*}
\log \lvert y \rvert-\epsilon <G_H^+(x,y) < \log \lvert y \rvert+\epsilon,
\end{equation*} 
for $\lvert y \rvert>R_\epsilon$. Let $\epsilon<<1$ and choose $c> R_\epsilon+2$. Then  
\[
B_{\epsilon,c}=V_{\left(R_\epsilon+\frac{1}{2}\right)}^+ \bigcap\left \{\lvert x \rvert, \lvert y \rvert\leq R_\epsilon+1\right \}\subseteq \Omega_c'.
\] 
Let 
\[
C_0:  t \mapsto \left(0, R' e^{2\pi i t}\right) 
\]
be a closed curve in $B_{\epsilon,c}\subseteq \Omega_c'$, where $R_\epsilon + 1/2<R'< R_\epsilon+1$. Clearly, $\alpha(C_0)=1$, and consequently $\alpha(m C_0)=m$ for any $m\in \mathbb{Z}$. This implies that for any $m\in \mathbb{Z}$, there exists a closed curve $C_m\subseteq \Omega_c'$ such that $\alpha(C_m)=m$.  Since (\ref{winding}) holds and $H^{-n}(\Omega_c')\subseteq \Omega_c'$ for any $n \ge 1$, it follows that $H^{-n}(C_m)\subseteq \Omega_c'$ with  $\alpha(H^{-n}(C_m))=m/d^n$.  Thus, $\alpha$ is a surjection from 
$\pi_1(\Omega_c')$ onto $\mathbb{Z}[{1}/{d}]$.

\medskip 

To show that $\alpha$ is injective, let $C$ be any closed curve in $\Omega_c'$ such that  $\alpha(C)=0$. Then there exist $c_1, c_2$ with $0<c_1<c_2<c$ such that 
\[
C\subseteq \left\{(x,y)\in \mathbb{C}^2: c_1< G_H^+(x,y) <c_2\right\}.
\]
Let $\epsilon>0$. Take $n_0$ large enough such that $d^{n_0}(c-c_2)>>\epsilon$ and
 such that 
\[
H^{n_0}(C)\subseteq V_{R_\epsilon}^+ \bigcap \left \{(x,y)\in \mathbb{C}^2: d^{n_0} c_1< G_H^+(x,y) <d^{n_0} c_2\right\}
\]
with $R_\epsilon>R$. Also since $H^{n_0}(C)$ is compact, there exists $R'>0$ such that 
\[
H^{n_0}(C) \subseteq V_{R_\epsilon}^+ \bigcap \left\{(x,y)\in \mathbb{C}^2: \lvert x \rvert, \lvert y \rvert \leq R' \right\}.
\]
Let 
\[
R_{s}= \sup\{\lvert y_0 \rvert: \text{ the complex line } y=y_0\text{ intersects } H^{n_0}(C)\}
\]
and  
\[
R_{i}= \inf\{\lvert y_0 \rvert: \text{ the complex line } y=y_0\text{ intersects } H^{n_0}(C)\}.
\]  
\medskip 

\no 
{\it Claim:} $V_{R_\epsilon}^+ \cap \{(x,y): R_i \leq \lvert y\rvert \leq R_s\} \subseteq \Omega_{d^{n_0} c}'$.

\medskip
 Let $(\tilde{x},\tilde{y})\in V_{R_\epsilon}^+ \cap \{(x,y): R_i \leq \lvert y\rvert \leq R_s\}$ be such that $G_H^+(\tilde{x},\tilde{y})\geq d^{n_0}c$. Then there exists a point $(x_c,y_c)\in H^{n_0}(C)$  such that $\lvert y_c \rvert=\lvert \tilde{y}\rvert$. Note that $G_H^+(x_c,y_c)< d^{n_0} c_2$. Since $d^{n_0}(c-c_2)>> \epsilon$, this is a contradiction. 
 
 \medskip 
  
 Now since $\alpha(C)=0$, it follows that $\alpha(H^{n_0}(C))=0$. This implies that $H^{n_0}(C)$ is null-homotopic in $V_R^+$. In fact, $H^{n_0}(C)$ is null-homotopic in 
 \[
 V_{R_\epsilon}^+ \cap \{(x,y): R_i-\delta < \lvert y\rvert < R_s+\delta\} \subseteq \Omega_{d^{n_0} c}' 
 \] 
  for $\delta>0$ small enough, which in turn shows that $C$ is null-homotopic in $\Omega_c'$. Therefore,
 for $c>0$ sufficiently large,  there is a bijective correspondence from the set of all closed paths in $\Omega_c'$ and $\mathbb{Z}[{1}/{d}]$. 



\medskip 

Let $p\in \Omega_c'$. We will prove that $\pi_1(\Omega_c',p)=\mathbb{Z}[{1}/{d}]$. For a given $g_p \in \mbb Z[1/d]$, there exists a closed curve $C\in \Omega_c'$ such that 
$\alpha(C)=g_p$.  Let $C$ be parametrized by $\beta_q$ with $\beta_q(0)=q$. Join $p$ and $q$ by a curve $\gamma_q \subseteq \Omega_c'$ (which is connected). The closed curve $C'=\gamma_q \cup \beta_q \cup (-\gamma_q)$ has base point at $p$ and  $\alpha(C')=g_p$.  Thus, for all $c>0$ sufficiently large, $\pi_1(\Omega_c')=\mathbb{Z}[{1}/{d}]$. Now since for any $c>0$, there exists a positive integer $n_c \ge 1$ sufficiently large such that $H^{n_c}(\Omega_c')=\Omega_{d_{n_c} c}'$,  we get that $\pi_1(\Omega_c')= \mathbb{Z}[{1}/{d}]$ for all $c>0$. Since $\mathbb{Z}[{1}/{d}]$ is commutative, the fundamental group of $\Omega_c'$ is independent of the base point.
\end{proof}


\section{Construction of a covering of $\Omega_c'$}
\no 
Let $c>0$ be large enough such that $\Omega_c' \cap V_R^+\neq \emptyset$. 
Fix a base point $a\in \Omega_c' \cap V_R^+ $ and consider the set of all pairs $(z,C)$ where $C$ is a path in $\Omega_c'$ joining $a$ and $z$. Introduce the equivalence relation
\[
(z,C) \sim (z',C')
\] 
if and only if $z=z'$ and $[CC'^{-1}]\in \mathbb{Z} \subseteq \pi_1(\Omega_c')$. Let $\widehat {\Omega}_{c}'$ denote the set of all equivalence classes and let $\widehat{\pi}_c:(z, C)\mapsto z$ be the natural projection map from $\widehat {\Omega}_{c}'$ to  $\Omega_c'$. Equip $\widehat {\Omega}_{c}'$ with the pull-back complex structure  so that the projection map $\widehat{\pi}_c$ becomes holomorphic. The assignment $\widehat{\phi}^+_c:\widehat {\Omega}_c'\rightarrow \mathbb{C}$ given by
\begin{equation}\label{defn phi hat}
\widehat{\phi}^+_c\left([z,C]\right)=\phi^+(a) \exp \int_C \omega,
\end{equation}
defines a holomorphic function, where $\phi^+$ is the B\"{o}ttcher function on $V_R^+$ and $\omega$ is as in Proposition\  \ref{fundamental}.  Now since $[z,C]=[z,C']$ implies $[CC'^{-1}]\in \mathbb{Z}$, it follows that  $\widehat{\phi}^+_c$ is well-defined. Also note that if $C$ lies completely in $V_R^+$, then  
\begin{equation}\label{PhiV}
\widehat{\phi}^+_c\left([z,C]\right)=\phi^+(a) \exp \int_C \omega=\phi^+(a) \exp \int_C \frac{d\phi^+}{\phi^+}=\phi^+\left(\pi_c[z,C]\right)=\phi^+(z).
\end{equation}
The goal of this section is to show that $\widehat {\Omega}_c'$ is biholomorphic to $\mathbb{C}\times \mathcal{A}_c$ where 
\[
\mathcal{A}_c=\{z\in \mathbb{C}: 1< \lvert z\rvert< e^c\}.
\]
As will be apparent shortly, this will be done by repeatedly using $H^n(\Omega_c')=\Omega_{d^nc}'$.

\medskip 
\no 
{\it A chain of covering maps:} As above, it is possible to construct a covering $\widehat {\Omega}_{d^nc}'$ of $\Omega_{d^n c}'$ with base point $H^n(a)\in \Omega_{d^n c}' \cap V_R^+ $ for all $ n \ge 1$. Define $\widehat{H}_c: \widehat{\Omega}_{c}' \rightarrow\widehat{ \Omega}_{d c}' $ by
\[
[z,C] \mapsto [H(z), H(C)]
\]
and similarly 
$\widehat{H}_{d^nc}: \widehat{\Omega}_{d^nc}' \rightarrow\widehat{ \Omega}_{d^{n+1} c}' $, for all $n \ge 1$. Clearly, the $\widehat{H}_{d^nc}$'s are well-defined and this leads to the chain  
 \[
 \begin{CD}
 \widehat{\Omega}_c' @>{\widehat{H}}_c >>  \widehat{\Omega}_{dc}'@>{\widehat{H}}_{dc} >> \widehat{\Omega}_{d^2 c}' @>{\widehat{H}}_{d^2c}>>  \widehat{\Omega}_{d^3 c}' @>  {\widehat{H}}_{d^3 c} >> \cdots.
 \end{CD}
 \]
 Define $\widehat{H}_c^n: \widehat{\Omega}_{c}' \rightarrow\widehat{ \Omega}_{d^n c}' $ by
\[
[z,C]\in \widehat{\Omega}_c' \mapsto [H^n(z), H^n(C)].
\]
Clearly, 
\[
\widehat{H}_c^n= \widehat{H}_{d^{n-1} c}\circ\cdots \circ \widehat{H}_c.
\]

\medskip
\no 
{\it Functorial property of $\widehat{\phi}^+_{d^nc}$'s:} As in (\ref{defn phi hat}), one can construct the B\"{o}ttcher-like maps $\widehat{\phi}^+_{d^nc}$ for all $n \ge 1$.
Further, since 
\[
\widehat{\phi}_{dc}^+ [H(z), H(C)]=\phi^+ (H(a))\exp \int_{H(C)}\omega,
\]
it follows from $H^*(\omega)=d\omega$ that
\begin{equation} \label{funct}
\widehat{\phi}^+_{dc} \circ \widehat{H}_c = {\left(\widehat{\phi}^+_c\right)}^d
\end{equation}
and in general, 
\begin{equation}\label{funct1}
\widehat{\phi}^+_{d^n c} \circ \widehat{H}_{d^{n-1} c} \circ \cdots \circ  \widehat{H}_c= {\left(\widehat{\phi}^+_c\right)}^{d^n},
\end{equation}
for all $n$.

\medskip 

For all $n\geq 1$, define
\[
\widehat{\Omega_{d^n c}' \cap V_R^+}=\left\{[z,C]\in \widehat{\Omega}_{d^n c}': C \text{ is a path in } V_R^+\right\}.
\]
The following proposition is immediate.
\begin{prop}\label{V}
For $c>0$ sufficiently large,  $\bigcup_{n=0}^\infty {(\widehat{H}_c^n)}^{-1}(\widehat{ \Omega_{d^n c}'\cap V_R^+})=\widehat{\Omega}_c'$.
\end{prop}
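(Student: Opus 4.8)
The plan is to show that every equivalence class $[z, C] \in \widehat{\Omega}_c'$ lies in ${(\widehat{H}_c^n)}^{-1}(\widehat{\Omega_{d^n c}' \cap V_R^+})$ for some $n$; the reverse inclusion is automatic since each ${(\widehat{H}_c^n)}^{-1}(\widehat{\Omega_{d^n c}' \cap V_R^+})$ is a subset of $\widehat{\Omega}_c'$. So fix $[z, C] \in \widehat{\Omega}_c'$, where $C$ is a path in $\Omega_c'$ joining the base point $a$ to $z$. Since $\widehat{H}_c^n[z, C] = [H^n(z), H^n(C)]$, the task is to find $n$ so that $H^n(C)$ is a path in $V_R^+$ — equivalently, so that $H^n(C) \subseteq V_R^+$ as a subset. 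The first step is therefore to recall the dynamical fact, already set up in the excerpt via the filtration $V_R \cup V_R^+$, that $H(V_R^+) \subseteq V_R^+$ and that the forward orbit of every point of $U^+$ eventually enters $V_R^+$: indeed, on $\Omega_c' = \{0 < G_H^+ < c\} \subseteq U^+$, the relation $G_H^+ \circ H = d \cdot G_H^+$ forces $\vert y_n \vert \to \infty$ along the orbit, and by the escape criterion for the H\'enon filtration the orbit is eventually trapped in $V_R^+$.

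The second step upgrades this from pointwise to uniform over the compact set $C$. Since $C \subseteq \Omega_c'$ is compact and contained in the open set $U^+$, and since $G_H^+$ is continuous and positive there, there is $c_1 > 0$ with $G_H^+ \geq c_1$ on $C$; also $G_H^+ \leq c_2 < c$ on $C$ for some $c_2$. Because $G_H^+ \circ H^n = d^n G_H^+$, we have $G_H^+ \geq d^n c_1$ on $H^n(C)$, so for $n$ large every point of $H^n(C)$ has $\vert y \vert$ as large as we like (using the estimate (\ref{estGH}) $\log^+\vert y\vert - \epsilon < G_H^+ < \log^+\vert y\vert + \epsilon$ valid on $V_{R_\epsilon}^+$, together with the fact that once $\vert y \vert$ is large enough the point must lie in $V_R^+$ by the filtration properties). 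More carefully: by compactness of $C$ and continuity of $H$, the sets $H^n(C)$ are compact, and by the escape property each point of $C$ has some iterate landing in the \emph{interior} of $V_R^+$ — a standard covering/compactness argument then produces a single $n_0$ with $H^{n_0}(C) \subseteq V_R^+$, and since $H(V_R^+) \subseteq V_R^+$ this persists for all $n \geq n_0$. Consequently $H^{n_0}(C)$ is a path lying in $V_R^+$, so $[H^{n_0}(z), H^{n_0}(C)] \in \widehat{\Omega_{d^{n_0} c}' \cap V_R^+}$, which is exactly the statement that $[z, C] \in {(\widehat{H}_c^{n_0})}^{-1}(\widehat{\Omega_{d^{n_0} c}' \cap V_R^+})$.

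Two small points need care. First, the definition of $\widehat{\Omega_{d^n c}' \cap V_R^+}$ requires $C$ to be a path \emph{in} $V_R^+$; one must make sure that it is the full image $H^{n_0}(C)$, not merely its endpoints, that lies in $V_R^+$ — this is why the uniform (compactness) argument in the second step is needed rather than just the pointwise escape statement. Second, one should check that the construction is consistent with base points: the base point of $\widehat{\Omega}_{d^n c}'$ is $H^n(a)$ with $a \in \Omega_c' \cap V_R^+$, and $\widehat{H}_c^n$ sends the class of the constant path at $a$ to the class of the constant path at $H^n(a)$, so the covering-space bookkeeping is automatic. The main obstacle is thus not conceptual but the uniformization in the second step: passing from ``each point of $C$ eventually escapes into $V_R^+$'' to ``a single iterate pushes all of $C$ into $V_R^+$.'' This is handled by the compactness of $C$ together with the openness of $V_R^+$ and the forward-invariance $H(V_R^+) \subseteq V_R^+$, which guarantees that once the (finitely many, by compactness) local escape times are exceeded, the whole curve has entered and stays inside $V_R^+$.
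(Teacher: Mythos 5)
Your argument is correct and is exactly the standard justification the paper has in mind when it declares this proposition ``immediate'': every point of $\Om'_c\subset U^+$ eventually enters $V_R^+$ under forward iteration, and compactness of the path $C$ together with the forward invariance $H(V_R^+)\subset V_R^+$ yields a single iterate $n_0$ with $H^{n_0}(C)\subset V_R^+$. No gaps.
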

Now we paraphrase \cite[Lemma 7.3.7]{MNTU} (also see \cite [Prop.\ 2.2] {Favre}).
\begin{itemize}
\item 
For sufficiently large $M>0$, let 
\[
U_R^+=\left\{(x,y)\in V_R^+: \lvert \phi^+(x,y)\rvert > M \max\{R, \lvert x\rvert\}\right\},
\]
where $\phi^+$ is the B\"{o}ttcher coordinate of $H$ in $V_R^+$. One can check that $H(U_R^+)\subseteq U_R^+$. There exists  a holomorphic function $\psi$ on $U_R^+$ such that
\[
\psi (H(z))=\frac{a}{d}\psi(z)+ Q(\phi^+(z))
\]
for all $z\in U_R^+$, where $Q$ is a polynomial of degree $d+1$ of a single variable. 

\item Define a map $\Phi: U_R^+ \rightarrow \mathbb{C}^2$ as follows: $(x,y)\mapsto (\psi(x,y),\phi^+(x,y))$.  One can prove that $\Phi$ is injective and for  $\rho>1$ sufficiently large,
\begin{equation} \label{STrho}
\{(s,t)\in \mathbb{C}^2: \lvert t \rvert >M, \lvert s \rvert <\rho  {\lvert t \rvert}^d \} \subseteq \Phi(U_R^+).
\end{equation}
\end{itemize}
For $n\geq 1$ and  $R>0$ sufficiently large, define 
\[
\widehat{\Omega_{d^n c}' \cap U_R^+}=\left\{[z,C]\in \widehat{ \Omega_{d^n c}' \cap V_R^+ }: \hat{\pi}_{d^n c}[z,C]\in U_R^+\right \}.
\]
\begin{prop}\label{UOmega}
Let $R>0$ be sufficiently large.  Then there exists a $c_0>0$ such that 
\begin{equation}\label{UOmega c}
\bigcup_{n=0}^\infty {\left(\widehat{H}_c^n\right)}^{-1}\left(\widehat{ \Omega_{d^n c}'\cap U_R^+}\right)=\widehat{\Omega}_c'
\end{equation}
for all $c \geq c_0$.
\end{prop}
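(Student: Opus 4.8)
The plan is to upgrade Proposition \ref{V} from the filtration region $V_R^+$ to the smaller region $U_R^+$, using the fact that $U_R^+$ absorbs the dynamics near infinity. Recall that the defining inequality for $U_R^+$ is $\lvert\phi^+(x,y)\rvert > M\max\{R,\lvert x\rvert\}$, while on $V_R^+$ we have $\lvert\phi^+\rvert = e^{G_H^+} \sim \lvert y\rvert \geq \max\{\lvert x\rvert, R\}$. So the difference between the two regions is only a bounded multiplicative factor $M$ in the comparison between $\lvert\phi^+\rvert$ and $\lvert x\rvert$, and this discrepancy is repaired after finitely many applications of $H$, since $\phi^+\circ H = (\phi^+)^d$ grows the $\phi^+$-coordinate like a $d$-th power while the $x$-coordinate of $H$ grows only linearly in the old coordinates. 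Concretely, I would first record the elementary pointwise statement: there is an integer $N = N(R,M)$ and a threshold $c_0 > 0$ so that for every point $z \in V_R^+$ with $G_H^+(z) \geq c_0$, one has $H^N(z) \in U_R^+$; this follows from the estimates in (\ref{estGH}) together with $G_H^+\circ H = d\cdot G_H^+$ and the polynomial growth bound on the first coordinate of $H^N$ inside $V_R^+$ (using $\deg q \le d-2$, so the $x$-coordinate of $H^n$ has strictly smaller degree than the $y$-coordinate).

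Next I would transfer this to the covers. Take $[z,C] \in \widehat{\Omega}_c'$. By Proposition \ref{V}, there is some $m \geq 0$ with $\widehat{H}_c^m[z,C] = [H^m(z), H^m(C)] \in \widehat{\Omega_{d^m c}' \cap V_R^+}$, i.e. $H^m(C)$ is a path in $V_R^+$ and in particular $H^m(z) \in V_R^+$. If $G_H^+(H^m(z)) = d^m G_H^+(z)$ is already $\geq c_0$ — which we can force for every $z$ by enlarging $m$, since $G_H^+(z) > 0$ on $\Omega_c'$ and $d^m \to \infty$; more carefully, one handles the points with $G_H^+$ bounded away from $0$ uniformly over a suitable piece and uses the chain $\widehat{H}$ together with $H^n(\Omega_c') = \Omega_{d^n c}'$ to sweep out all of $\widehat{\Omega}_c'$ — then applying $\widehat{H}_{d^m c}^N$ moves $[H^m(z), H^m(C)]$ into $\widehat{\Omega_{d^{m+N} c}' \cap U_R^+}$: indeed $H^{m+N}(z) \in U_R^+$ by the pointwise statement, and $H^{m+N}(C) = H^N(H^m(C))$ is a path in $H^N(V_R^+) \subseteq V_R^+$, so it is a legitimate representative of a point of $\widehat{\Omega_{d^{m+N}c}' \cap V_R^+}$ whose projection lies in $U_R^+$, which is exactly the definition of $\widehat{\Omega_{d^{m+N}c}' \cap U_R^+}$. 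Hence $[z,C] \in (\widehat{H}_c^{m+N})^{-1}(\widehat{\Omega_{d^{m+N}c}' \cap U_R^+})$, giving the inclusion $\subseteq$ in (\ref{UOmega c}); the reverse inclusion is trivial since every set on the left is a subset of $\widehat{\Omega}_c'$.

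The main obstacle I anticipate is the bookkeeping needed to make the parameter $m$ uniform — that is, to guarantee that a single sequence of applications of the $\widehat{H}$'s pushes an arbitrary $[z,C]$ simultaneously into the $V_R^+$-cover (Proposition \ref{V}) and past the $G_H^+ \geq c_0$ threshold. Points of $\Omega_c'$ with $G_H^+$ very close to $0$ require arbitrarily many iterations to reach the $c_0$-level, so one cannot hope for a fixed $N$ working for all $[z,C]$ at once; the statement (\ref{UOmega c}) only asserts that the union over all $n$ is everything, so this is fine, but the argument must be phrased as "for each $[z,C]$, choose $m$ large enough" rather than with a uniform bound. I would also need to check that $c_0$ can be chosen independently of $c$ (it depends only on $R$ and $M$, hence only on $H$), so that the conclusion holds for all $c \geq c_0$ as stated; this is immediate from the construction since $N$ and $c_0$ come purely from the filtration geometry and the Böttcher estimates, not from $c$.
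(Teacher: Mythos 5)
Your argument is correct and follows essentially the same route as the paper's: the paper's proof is a terse version of exactly this, invoking the forward invariance $H(U_R^+)\subseteq U_R^+$ and asserting that every $[z,C]\in\widehat{\Omega}_c'$ satisfies $\widehat{H}_c^n[z,C]\in\widehat{\Omega_{d^nc}'\cap U_R^+}$ for some $n$, with the justification (Proposition \ref{V} together with the growth of $\phi^+\circ H=(\phi^+)^d$ relative to the first coordinate on $V_R^+$) left implicit. Your write-up supplies those omitted details, and your remark that no uniform $N$ is needed because the statement is a union over all $n$ is the correct reading.
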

\begin{proof}
Since $H(U_R^+)\subseteq U_R^+$, it follows that 
$
\widehat{H}_c(\widehat{\Omega_{c}' \cap U_R^+})\subseteq \widehat{\Omega_{d c}' \cap U_R^+}.
$
Let $[z,C]\in \widehat{\Omega}_c'$, then clearly,
$
\widehat{H}_c^n[z,C]\in \widehat{\Omega_{d^n c}'\cap U_R^+ },
$
for some $n$. The other inclusion is evident. 
\end{proof}

\begin{lem} \label{Phi H G Phi}
There exist holomorphic functions $\widehat{\psi}_c$ and $\widehat{\psi}_{dc}$ on $\widehat{\Omega}_c'$ and $\widehat{\Omega}_{dc}'$, respectively such that 
\begin{equation}\label{psiphi1}
\widehat{\psi}_{dc}\circ \widehat{H}_c\left([z,C]\right)=\frac{a}{d} \left( \widehat{\psi}_{c}\left([z,C]\right)\right)+ Q\left(\widehat{\phi}^+_c([z,C])\right),
\end{equation}
for all $[z,C]\in \widehat{\Omega}_c'$. More generally,  for all $n\geq 1$, there exists  a holomorphic function $\widehat{\psi}_{d^nc}$ on $\widehat{\Omega}_{d^n c}'$ such that 
\begin{equation}\label{psiphi2}
\widehat{\psi}_{d^nc}\circ \widehat{H}_{d^{n-1}c}([z,C])=\frac{a}{d} \left(\widehat{\psi}_{d^{n-1}c}([z,C])\right)+ Q\left(\widehat{\phi}^+_{d^{n-1}c}\left([z,C]\right)\right),
\end{equation}
for all $[z,C]\in \widehat{\Omega}_{d^{n-1}c}'$.
\end{lem}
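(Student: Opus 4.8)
\textbf{Proof proposal for Lemma \ref{Phi H G Phi}.}

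The plan is to descend the function $\psi$ from the local set $U_R^+$ (whose existence is recalled just above the lemma) to the covering spaces $\widehat{\Omega}_{d^nc}'$, using the chain of covering maps $\widehat{H}_{d^nc}$ and the functorial identity $\widehat{\phi}^+_{dc}\circ\widehat{H}_c = (\widehat{\phi}^+_c)^d$ from (\ref{funct}) as the compatibility glue. First I would define, for each $n\ge 0$, a candidate function on the distinguished open piece $\widehat{\Omega_{d^nc}'\cap U_R^+}$ by pulling back $\psi$: for $[z,C]\in\widehat{\Omega_{d^nc}'\cap U_R^+}$ (so the path $C$ lies in $V_R^+$ and the endpoint $\widehat{\pi}_{d^nc}[z,C]=z$ lies in $U_R^+$), set $\widehat{\psi}_{d^nc}([z,C]) = \psi(z)$. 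Just as in (\ref{PhiV}), where $\widehat{\phi}^+_c$ was seen to agree with $\phi^+\circ\widehat{\pi}_c$ on paths lying in $V_R^+$, this is well-defined because two representatives $[z,C]=[z,C']$ with $C,C'\subset V_R^+$ differ by a loop with trivial $\alpha$-class, hence by a loop null-homotopic in $V_R^+$ (the winding number around the $x$-axis is the only obstruction there), so the value $\psi(z)$ does not depend on the representative.

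Next I would extend $\widehat{\psi}_{d^nc}$ to all of $\widehat{\Omega}_{d^nc}'$ by propagating through the iterates of $\widehat{H}$. By Proposition \ref{UOmega}, every point $[z,C]\in\widehat{\Omega}_c'$ satisfies $\widehat{H}_c^m[z,C]\in\widehat{\Omega_{d^mc}'\cap U_R^+}$ for some $m$; so I would define
\begin{equation*}
\widehat{\psi}_c([z,C]) = \left(\frac{d}{a}\right)^m\left(\widehat{\psi}_{d^mc}\big(\widehat{H}_c^m[z,C]\big) - \sum_{j=0}^{m-1}\left(\frac{a}{d}\right)^{m-1-j} Q\big(\widehat{\phi}^+_{d^jc}(\widehat{H}_c^j[z,C])\big)\right),
\end{equation*}
which is just the formula obtained by solving the recursion (\ref{psiphi1})–(\ref{psiphi2}) forward. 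One must check this is independent of the choice of $m$: if it works for $m$ it works for $m+1$, because on $U_R^+$ the genuine function $\psi$ satisfies $\psi\circ H = (a/d)\psi + Q\circ\phi^+$ and $\widehat{H}_c(\widehat{\Omega_c'\cap U_R^+})\subseteq\widehat{\Omega_{dc}'\cap U_R^+}$ (as noted in the proof of Proposition \ref{UOmega}), so the two defining expressions telescope into one another; the functorial identity $\widehat{\phi}^+_{d^{j+1}c}\circ\widehat{H}_{d^jc} = (\widehat{\phi}^+_{d^jc})^d$ is exactly what makes the $Q$-terms match up. Holomorphicity is clear since each ingredient ($\widehat{H}_c^j$, $\widehat{\phi}^+_{d^jc}$, and $\psi$ on $U_R^+$) is holomorphic, and the pieces agree on overlaps by construction. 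Applying this construction at every level $d^nc$ (using that $H^{n}(\Omega_c')=\Omega_{d^nc}'$ so the same machinery runs with base point $H^n(a)$) produces the family $\{\widehat{\psi}_{d^nc}\}$, and the relations (\ref{psiphi1}), (\ref{psiphi2}) hold by construction — first on the dense open set where everything reduces to the genuine identity for $\psi$ on $U_R^+$, and then everywhere by the identity theorem.

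The main obstacle I anticipate is the well-definedness on overlaps, i.e.\ showing the formula above is independent of $m$ and patches consistently — this is where the interplay between the covering structure, the growth condition defining $U_R^+$ (one needs $H(U_R^+)\subseteq U_R^+$ so that once a point enters the ``good'' region it stays there under forward iteration), and the functorial property (\ref{funct}) of the $\widehat{\phi}^+$'s all have to be used simultaneously. A secondary technical point is making sure the covering $\widehat{\Omega}_{d^nc}'$ is genuinely the one quotienting $\pi_1$ by the copy of $\mathbb{Z}$ generated by the $x$-axis loop, so that pulling back by $\widehat{\pi}_{d^nc}$ kills precisely the ambiguity in $\psi$ coming from that loop; this is the analogue for $\psi$ of the reason $\widehat{\phi}^+_c$ in (\ref{defn phi hat}) is single-valued, and it should follow from the same computation of $\pi_1(\Omega_{d^nc}')$ in Proposition \ref{fundamental}.
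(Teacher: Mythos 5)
Your proposal is correct and follows the paper's proof essentially verbatim: you set $\widehat{\psi}_{d^nc}=\psi\circ\widehat{\pi}_{d^nc}$ on $\widehat{\Omega_{d^nc}'\cap U_R^+}$ and extend to all of $\widehat{\Omega}_{d^nc}'$ by the telescoped recursion, which (after expanding the prefactor $(d/a)^m$) is exactly the formula the paper writes down, and your discussion of independence of $m$ via $H(U_R^+)\subseteq U_R^+$ and the functorial identity (\ref{funct}) supplies details the paper leaves implicit. One harmless slip: well-definedness on $\widehat{\Omega_{d^nc}'\cap U_R^+}$ does not require $[CC'^{-1}]$ to be null-homotopic in $V_R^+$ (the equivalence relation only forces $[CC'^{-1}]\in\mathbb{Z}$, which can wind nontrivially), but it is immediate anyway because $\psi$ is a genuine single-valued holomorphic function on $U_R^+$, so $\psi(z)$ depends only on the endpoint $z$ and not on the path.
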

\begin{proof}
For $[z,C]\in \widehat{\Omega_c' \cap U_R^+ }$, let
\[
\widehat{\psi}_c [z,C]=\psi(z).
\]	
Inductively, we can define $\widehat{\psi}_c$ on $\widehat{\Omega}_c'$ as follows. Let $[z,C]\in \widehat{\Omega}_c'$ and let $\widehat{H}_c^n[z,C]\in \widehat{ \Omega_{d^n c}' \cap U_R^+}$, for some $n\in \mathbb{N}$. Then, set
\[
\widehat{\psi_c}[z,C]=\frac{d^n}{a^n} \left(\widehat{\psi}_{d^n c} \circ \widehat{H}_c^n[z,C]\right)-\frac{d^n}{a^n} \left(Q\left( \widehat{\phi}^+_{d^{n-1} c} \circ \widehat{H}_c^{n-1}[z,C]\right)\right)-\cdots- \frac{d}{a} Q\left( \widehat{\phi}^+_c[z,C]\right).
\]
Similarly, we define $\widehat{\psi}_{d^nc}$ on $\widehat{\Omega}_{d^n c}'$, for each $n\geq 1$. Clearly,  (\ref{psiphi1}) and (\ref{psiphi2}) follow. 
\end{proof}

\no 
{\it Construction of a biholomorphism between $\widehat{{\Omega}}_c'$ and $\mathbb{C}\times \mathcal{A}_c$}:
Let $[z,C]\in \widehat{\Omega}_c'$. By Proposition \; \ref{UOmega}, it follows that $\widehat{H}_c^n[z,C]\in \widehat{ \Omega_{d^n c}'\cap U_R^+ }$, for some $n \ge 1$. This implies that 
\[
[H^n(z), H^n C] \in \widehat{ \Omega_{d^n c}'\cap U_R^+ }.
\]
Further, by (\ref{funct1}), we have
\[
\widehat{\phi}^+_{d^n c}[H^n z, H^n C]={\left( \widehat{\phi}^+_c [z,C]\right)}^{d^n}=\phi^+ \circ H^n (z).
\]
Now since
\begin{equation}
G^+=\log \lvert \phi^+\rvert
\end{equation}
on $V_R^+$ and $H^n(z)\in V_R^+$, it follows that 
\[
0<\log \left(\left \lvert \phi^+ \circ H^n(z) \right\rvert \right) <d^n c.
\]
Therefore, 
\[
1<{\left\lvert\widehat{\phi}^+_c[z,C]\right\rvert}^{d^n} <\exp (d^n c),
\]
for all $n$ sufficiently large, which in turn gives 
\[
1<\left\lvert \widehat{\phi}^+_c[z,C]\ \right\rvert < \exp (c),
\]
for all $[z,C]\in \widehat{\Omega}_c'$. Therefore, the image of $\widehat{\Omega}_c'$ under the map $\widehat{\phi}^+_c$ is the annulus $\mathcal{A}_c$ as defined earlier.

 \medskip 
 
Define $\widehat{\Phi}_c : \widehat{\Omega}_c'  \rightarrow \mathbb{C}\times \mathcal{A}_c$  by
\[
\widehat{\Phi}_c([z,C]) = \left(\widehat{\psi}_c([z,C]), \widehat{\phi}_c^+([z,C])\right).
\] 
and $G_c : \mathbb{C}\times \mathcal{A}_c \rightarrow \mathbb{C}\times \mathcal{A}_{dc}$ by
\begin{equation} \label{lift}
G_c(s,t):= \left(\frac{a}{d}s+Q(t), t^d\right).
\end{equation}
Similarly, one can define $\widehat{\Phi}_{d^n c}: \widehat{\Omega}_{d^n c}' \rightarrow \mathbb{C}\times \mathcal{A}_{d^n c}$ and $G_{d^n c}: \mathbb{C}\times \mathcal{A}_{d^n c} \rightarrow \mathbb{C}\times \mathcal{A}_{d^{n+1}c}$, for all $n\geq 1$. Also, note that by Lemma \ref{Phi H G Phi} we have

\begin{equation} \label{cover}
\widehat{\Phi}_{d^n c} \circ \widehat{H}_{d^{n-1}c}= G_{d^{n-1} c} \circ \widehat{\Phi}_{d^{n-1} c},
\end{equation}
for all $n\geq 1$ and thus 
\begin{equation}\label{cover1}
\widehat{\Phi}_{d^n c} \circ \widehat{H}_{d^{n-1}c}\circ \cdots \circ \widehat{H}_c= G_{d^{n-1} c} \circ\cdots G_c\circ \widehat{\Phi}_{c}
\end{equation}
on $\widehat{\Omega}_{c}'$. 

\begin{prop}\label{Upto bihol}
For all $c>0$ sufficiently large, $\widehat{\Omega}_{c}'$ is biholomorphic to $\mathbb{C}\times \mathcal{A}_c$.
\end{prop}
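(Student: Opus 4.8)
The plan is to show that $\widehat{\Phi}_c : \widehat{\Omega}_c' \to \mathbb{C} \times \mathcal{A}_c$ is itself a biholomorphism, since we have already established that its second component $\widehat{\phi}_c^+$ surjects onto $\mathcal{A}_c$. The key structural input is the pair of commuting squares \eqref{cover}, which identify $\widehat{H}_{d^{n-1}c}$ with the explicit affine-type map $G_{d^{n-1}c}(s,t) = ((a/d)s + Q(t), t^d)$ via the maps $\widehat{\Phi}_{d^n c}$. Note that each $G_{d^{n}c}$ is injective on $\mathbb{C} \times \mathcal{A}_{d^n c}$: if $t_1^d = t_2^d$ and $(a/d)s_1 + Q(t_1) = (a/d)s_2 + Q(t_2)$, one does not immediately get $t_1 = t_2$ from $t_1^d = t_2^d$ alone, but combined with the constraint $1 < |t_i| < e^{d^n c}$ and a closer look this is not automatic — so instead I would exploit that $\widehat{H}_{d^{n-1}c}$ is a genuine biholomorphism of covering spaces (it covers the biholomorphism $H : \Omega_{d^{n-1}c}' \to \Omega_{d^n c}'$ and respects the $\mathbb{Z} \subseteq \pi_1$ identification used to build the covers, hence lifts to a biholomorphism), and use \eqref{cover} to transfer this.

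First I would prove injectivity of $\widehat{\Phi}_c$. Suppose $\widehat{\Phi}_c([z_1,C_1]) = \widehat{\Phi}_c([z_2,C_2])$. By Proposition \ref{UOmega}, pick $n$ so large that both $\widehat{H}_c^n[z_i,C_i]$ lie in $\widehat{\Omega_{d^n c}' \cap U_R^+}$. Applying \eqref{cover1}, $\widehat{\Phi}_{d^n c}\big(\widehat{H}_c^n[z_1,C_1]\big) = (G_{d^{n-1}c}\circ\cdots\circ G_c)\big(\widehat{\Phi}_c[z_1,C_1]\big) = (G_{d^{n-1}c}\circ\cdots\circ G_c)\big(\widehat{\Phi}_c[z_2,C_2]\big) = \widehat{\Phi}_{d^n c}\big(\widehat{H}_c^n[z_2,C_2]\big)$. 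On $\widehat{\Omega_{d^n c}' \cap U_R^+}$ the map $\widehat{\Phi}_{d^n c}$ is, by construction, just $[z,C] \mapsto (\psi(z), \phi^+(z))$ composed with the projection $\hat\pi_{d^n c}$, and $\Phi = (\psi,\phi^+)$ is injective on $U_R^+$ by the cited result from \cite{MNTU}; moreover on the $V_R^+$ part the covering is trivial (paths in $V_R^+$ are determined up to the relevant homotopy), so $\widehat{\Phi}_{d^n c}$ is injective on $\widehat{\Omega_{d^n c}' \cap U_R^+}$. Hence $\widehat{H}_c^n[z_1,C_1] = \widehat{H}_c^n[z_2,C_2]$, and since each $\widehat{H}_{d^j c}$ is injective (being a lift of the biholomorphism $H$), we conclude $[z_1,C_1] = [z_2,C_2]$.

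Next I would prove surjectivity. Fix $(s_0, t_0) \in \mathbb{C} \times \mathcal{A}_c$. Since $t_0^{d^n} \to \infty$ in modulus and $|t_0| > 1$, and by \eqref{STrho} the image $\Phi(U_R^+)$ contains $\{|t| > M, |s| < \rho|t|^d\}$, one checks that for $n$ large the point $(G_{d^{n-1}c}\circ\cdots\circ G_c)(s_0,t_0) = \big(\text{(affine in }s_0), t_0^{d^n}\big)$ lies in the region \eqref{STrho} relative to level $d^n c$ — the second coordinate grows like $|t_0|^{d^n}$ while the first coordinate grows only like $(a/d)^n |s_0| + (\text{polynomial in } t_0^{d^{n-1}})$, which has degree $(d+1)d^{n-1} < d\cdot d^{n-1}\cdot\,$(something)? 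Here one must check the degree bookkeeping: $Q$ has degree $d+1$, so $Q(t_0^{d^{n-1}})$ has degree $(d+1)d^{n-1}$ in $t_0$, while $|t_0|^d$ at level $d^n c$ means comparing against $|t_0^{d^{n-1}}|^d = |t_0|^{d^n}$; since $(d+1)d^{n-1} < d^n$ for $d \ge 2$, the point does land in \eqref{STrho} for $n$ large. Therefore there is a point $w \in \widehat{\Omega_{d^n c}' \cap U_R^+}$ with $\widehat{\Phi}_{d^n c}(w) = (G_{d^{n-1}c}\circ\cdots\circ G_c)(s_0,t_0)$; setting $[z,C] = (\widehat{H}_c^n)^{-1}(w)$ (legitimate since $\widehat{H}_c^n$ is a biholomorphism onto its image and $w$ lies in the image by Proposition \ref{UOmega}) and using \eqref{cover1} plus injectivity of the $G$'s on the relevant annular regions gives $\widehat{\Phi}_c([z,C]) = (s_0,t_0)$. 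Finally, $\widehat{\Phi}_c$ is holomorphic with holomorphic local inverses (it is locally $(\psi,\phi^+)$ up to the $H^n$-conjugation, all of which are biholomorphic on the pieces involved), so being a holomorphic bijection it is a biholomorphism.

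The main obstacle I anticipate is the surjectivity argument, specifically making the degree/growth comparison between the two coordinates of the iterated map $G_{d^{n-1}c}\circ\cdots\circ G_c$ rigorous enough to guarantee that the image point falls inside the explicit region \eqref{STrho} — one has to track how the polynomial $Q$ and the factor $a/d$ compound under iteration and confirm the first coordinate stays within the $\rho|t|^d$ bound for all large $n$. A secondary subtlety is verifying cleanly that $\widehat{\Phi}_{d^n c}$ is injective on the full $\widehat{\Omega_{d^n c}' \cap U_R^+}$, i.e.\ that passing to the $\mathbb{Z}$-quotient cover does not identify points that $\Phi = (\psi, \phi^+)$ separates on $U_R^+$; this should follow because on $U_R^+ \subseteq V_R^+$ the form $\omega = d\phi^+/\phi^+$ integrates to the single-valued $\log \phi^+$, so the covering restricted to this piece is trivial.
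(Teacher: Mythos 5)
Your proposal follows essentially the same route as the paper: exhaust $\widehat{\Omega}_c'$ and $\mathbb{C}\times\mathcal{A}_c$ by the preimages of the $U_R^+$-pieces under $\widehat{H}_c^n$ and $G_{d^{n-1}c}\circ\cdots\circ G_c$ respectively, transfer the injectivity of $\Phi=(\psi,\phi^+)$ on $U_R^+$ back through the commuting squares \eqref{cover1}, and use \eqref{STrho} to see that every point of $\mathbb{C}\times\mathcal{A}_c$ eventually lands in $\Phi(U_R^+)$ under the iterated maps $G_{d^{n-1}c}\circ\cdots\circ G_c$. Two repairs are needed in your surjectivity step. First, the degree bookkeeping as written is wrong: you compare $\lvert Q(t_0^{d^{n-1}})\rvert\sim\lvert t_0\rvert^{(d+1)d^{n-1}}$ against $\lvert t_0^{d^{n-1}}\rvert^d=\lvert t_0\rvert^{d^n}$ and assert $(d+1)d^{n-1}<d^n$, which is false since $(d+1)d^{n-1}=d^n+d^{n-1}$. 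The condition in \eqref{STrho} is $\lvert s\rvert<\rho\lvert t\rvert^d$ where $t$ is the \emph{second coordinate of the image point}, namely $t_0^{d^n}$, so the correct comparison is against $\lvert t_0\rvert^{d^{n+1}}$, and $(d+1)d^{n-1}<d^{n+1}$ does hold because $d+1<d^2$ for $d\ge 2$; with that the estimate goes through. Second, knowing the image point lies in $\Phi(U_R^+)$ is not quite enough: you must also check that its $\Phi$-preimage lies in $\Omega_{d^nc}'$, so that the point belongs to $\widehat{\Phi}_{d^nc}\bigl(\widehat{\Omega_{d^nc}'\cap U_R^+}\bigr)$. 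This is the paper's observation that the second coordinate $t_0^{d^n}$ has modulus strictly between $1$ and $e^{d^nc}$, whence $0<G_H^+<d^nc$ at the preimage since $G_H^+=\log\lvert\phi^+\rvert$ there. Your injectivity argument (triviality of the cover over $V_R^+$ because closed curves in $V_R^+$ have integer winding number, hence lie in the subgroup $\mathbb{Z}$) is a correct, mildly repackaged version of the paper's final step, which deduces $[CC'^{-1}]\in\mathbb{Z}$ directly from $\widehat{\phi}^+_c[z,C]=\widehat{\phi}^+_c[z',C']$ via \eqref{defn phi hat}.
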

\begin{proof}
First we prove that
\begin{align}\label{CAc}
\mathbb{C}\times \mathcal{A}_c=  \bigcup_{n=0}^\infty {\left(G_{d^{n-1}c}\circ \cdots\circ G_c\right)}^{-1}\left (\widehat{\Phi}_{d^n c}\left(\widehat{ \Omega_{d^n c}'\cap U_R^+}\right)\right)
\end{align}
(for $n=0$, we assume $G_{d^{n-1}c}\circ \cdots\circ G_c={\rm{Id}}$).
Let $G_{d^{n-1}c}\circ \cdots\circ G_c(s,t)\in \widehat{\Phi}_{d^n c}\left(\widehat{ \Omega_{d^n c}'\cap U_R^+ }\right)$. Then using Proposition\ \ref{UOmega} and (\ref{funct1}), we get that $t\in \mathcal{A}_c$. Thus
\[
 \bigcup_{n=0}^\infty {\left(G_{d^{n-1}c}\circ \cdots\circ G_c\right)}^{-1}\left (\widehat{\Phi}_{d^n c}\left(\widehat{ \Omega_{d^n c}'\cap U_R^+ }\right)\right) \subseteq \mathbb{C}\times \mathcal{A}_c.
\]
Now let $(s,t)\in \mathbb{C}\times \mathcal{A}_c$. Note that 
\[
G_{d^{n-1}c}\circ \cdots \circ G_c(s,t)=\left(({a^n}/{d^n})s+({a^{n-1}}/{d^{n-1}})Q(t)+\cdots+( {a}/{d})Q\left(t^{d^{n-2}}\right)+Q\left(t^{d^{n-1}}\right),t^{d^n}\right)
\]
for $n\geq 1$ and for $(s,t)\in \mathbb{C}\times \mathcal{A}_c$. Since $Q$ is a polynomial of degree $(d+1)$ by (\ref{STrho}), for large $n$, we have 
\[
G_{d^{n-1}c}\circ \cdots\circ G_c(s,t) \in \Phi(U_R^+)=\bigcup_{n=0}^\infty \Phi( \Omega_{d^nc}' \cap U_R^+ )=\bigcup_{n=0}^\infty \widehat{\Phi}_{d^n c}\left( \widehat{\Omega_{d^n c}'\cap U_R^+ }\right).
\]
Therefore, 
\[
G_{d^{n-1}c}\circ \cdots\circ G_c(s,t) \in \Phi( \Omega_{d^m r}' \cap U_R^+) ,
\]
for some $m$. Let $G_{d^{n-1}c}\circ \cdots\circ G_c(s,t)=\Phi(z)=(\psi(z), \phi^+(z))$.  Now if  $m>n$, then $G^+(z)>d^n c$ which implies $\lvert \phi^+(z)\rvert> \exp(d^n c)$. This is a contradiction, which in turn gives (\ref{CAc}).

\medskip 

Note that both (\ref{CAc}) and (\ref{UOmega c}) are increasing unions. Thus it is sufficient to prove that $\widehat{\Phi}_c$ is a bijection from $ {\left(\widehat{H}_c^n\right)}^{-1}\left(\widehat{ \Omega_{d^n c}'\cap U_R^+}\right)$ to ${\left(G_{d^{n-1}c}\circ \cdots\circ G_c\right)}^{-1}\left (\widehat{\Phi}_{d^n c}\left(\widehat{ \Omega_{d^n c}'\cap U_R^+ }\right)\right)$  for each $n\geq 0$. Using (\ref{cover}) inductively, we get 
\begin{align*}
\widehat{\Phi}_{d^n c}\left(\widehat{\Omega_{d^n c}' \cap U_R^+}\right)=G_{d^{n-1}c}\circ \cdots\circ G_c \circ \widehat{\Phi}_c\circ{ (\widehat{H}_c^n)}^{-1}\left(\widehat{\Omega_{d^n c}' \cap U_R^+}\right)\\
\Rightarrow {\left(G_{d^{n-1}c}\circ \cdots\circ G_c\right)}^{-1}\left(\widehat{\Phi}_{d^n c}\left(\widehat{\Omega_{d^n c}' \cap U_R^+}\right)\right)= \widehat{\Phi}_c\circ{ (\widehat{H}_c^n)}^{-1}\left(\widehat{\Omega_{d^n c}' \cap U_R^+}\right).
\end{align*}
This proves that $\widehat{\Phi}_c$ is surjective.
To prove that $\widehat{\Phi}_c$ is injective,   note that 
it follows from (\ref{cover1}) that
\[
\widehat{\Phi}_{d^n c} [H^n(z),H^n(C)]=\widehat{\Phi}_{d^n c} [H^n(z'),H^n(C')],
\]
for all $n \geq 0$ if  $\widehat{\Phi}_{c} [z,C]=\widehat{\Phi}_{c} [z',C']$, for some $[z,C], [z',C'] \in \widehat{\Omega}_{c}'$. Since $\Phi$ is injective on $U_R^+$, it must be the case that
$H^n(z)=H^n(z')$ for large $n$, and thus $z=z'$. Now since $\widehat{\Phi}_{c} [z,C]=\widehat{\Phi}_{c} [z',C']$, it follows from (\ref{defn phi hat}) that $[C{C'}^{-1}] \in \mathcal{H}_c$ which shows that $[z,C]=[z',C']$. This finishes the proof.
\end{proof} 



\section{Proof of Theorem \ref{AutOmegac}}

\no {\it{Step 1:}} Recall from Proposition\ \ref{Prop 1} that if  $a$ is an automorphism of $\Omega_c$, then $a$ also acts as an automorphism of $\Omega_c'$. Further, by Proposition\ \ref{Upto bihol}, the covering of $\Omega_c'$ corresponding to the subgroup $\mathbb{Z} \subseteq \mathbb{Z}[1/d]=\pi_1\left(\Omega_c'\right)$ is $ \mathbb{C}\times \mathcal{A}_c$ up to a biholomorphism.

\medskip 
\no 
{\it Claim:} Any automorphism of $\Omega_c'$  lifts as an automorphism of $ \mathbb{C}\times \mathcal{A}_c$.

\medskip
\no 
Let $a$ be an automorphism of $\Omega_c'$. Then $a$ lifts as an automorphism $A$ of $ \mathbb{C}\times \mathcal{A}_c$ if it induces $\pm \rm{Id}$ on 
$\pi_1\left(\Omega_c'\right)=\mathbb{Z}[{1}/{d}]$. It can be checked that any group isomorphism of $\mathbb{Z}[1/d]$ is of the form $i_a(x)=mx$, where $m=\pm d^s$ with $s\in \mathbb{Z}$. Thus there always exists an $s\in \mathbb{Z}$ such that $H^s \circ a$ (which is clearly a biholomorphism from $\Omega_c'$ to $\Omega_{d^s c}'$) induces identity as an isomorphism of $\mathbb{Z}[1/d]$. Therefore, $H^s \circ a$  lifts to a biholomorphism from $ \mathbb{C}\times \mathcal{A}_c$ to $ \mathbb{C}\times \mathcal{A}_{d^s c}$. Let 
\[
A(u,v)=\left(A_1(u,v), A_2(u,v)\right) : \mathbb{C}\times \mathcal{A}_c \rightarrow \mathbb{C}\times \mathcal{A}_{d^sc} 
\]
be this biholomorphic lift. For each $v\in \mathcal{A}_c$, the map 
\[
u \mapsto A_2(u,v)
\]
is a bounded entire function, which implies that $A_2(u,v)\equiv h(v)$ and thus
\[
A(u,v)=( A_1(u,v),h(v))
\]
for all $(u,v)\in  \mathbb{C}\times \mathcal{A}_c$. Similarly, 
\[
A^{-1}(u,v)=(A_1^*(u,v),h^*(v)),
\]
for all $(u,v)\in  \mathbb{C}\times \mathcal{A}_{d^sc}$.
Now since
\begin{align} \label{A2}
A\circ A^{-1}(u,v)= \left(A_1(A_1^*(u,v),h^*(v)),h h^*(v)\right)=(u,v) \nonumber\\
= \left(A_1^*( A_1(u,v),h(v)), h^* h(v)\right)=A^{-1}\circ A(u,v)
\end{align}
holds, it follows that 
\[
h\circ h^*\equiv h^*\circ h\equiv \rm{Id}
\]
in $\mathbb{C}$. Therefore, $h$ is a biholomorphism between $\mathcal{A}_c$ and $\mathcal{A}_{d^s c}$,  which is not possible unless $s=0$. The claim follows.

\medskip 
\no 
{\it{Step 2:}} Let $A(u,v)=\left(A_1(u,v), A_2(u,v)\right)$ be an automorphism of $\mathcal{A}_c \times \mathbb{C}$.
Exactly as in Step 1, one can show that
$$
A(u,v)=(A_1(u,v),h(v)),
$$
for all $(u,v)\in  \mathbb{C}\times \mathcal{A}_c$,  where $h(v)=\alpha v$ or $h(v)=\alpha {e^{c}}/{v}$ with $\lvert \alpha\rvert=1$.

\medskip

Now for a fixed $v\in \mathbb{C}$, consider the following pair of entire functions on $\mathbb{C}$: 
\[
P_1: u\mapsto A_1(u,h^*(v)) \text{ and } P_2: u\mapsto A_1^*(u,v).
\]
From (\ref{A2}), it follows that $P_1 \circ P_2(u) =u$, for all $u\in \mathbb{C}$.  Thus $P_1$ is onto. 
Now we prove that $P_1$ is also injective. Let $u_1\neq u_2$. Then since $A$ is an automorphism, for a fixed $v$, we have 
\[
A(u_1,h^*v)=(A_1(u_1,h^*v),v)\neq (A_1(u_2,h^*v),v)=A(u_2,h^*v),
\] 
which implies $A_1(u_1,h^*v)\neq A_1(u_2,h^*v)$, i.e., $P_1(u_1)\neq P_1(u_2)$. Thus $P_1$ is injective. Therefore for a fixed $v$,  $A_1(u,v)$ is an automorphism of $\mathbb{C}$ and thus
\begin{equation} \label{aut2}
A_1(u,v)=\beta(v)u+ \gamma (v), 
\end{equation}
where $\beta$ and $\gamma$ are holomorphic on $\mathbb{C}$.

\medskip 
\no 
{\it{Step 3:}}
It follows from (\ref{cover}) that 
\[
{\widehat{\Phi}_{d^n c}}^{-1} \circ G_{d^{n-1}c}= \widehat{H}_{d^{n-1} c}
\circ \widehat{\Phi}_{d^{n-1} c}^{-1},
\]
for each $n \geq 1$ and this gives
\[
\left(\widehat{\pi}_{d^n c} \circ \widehat{\Phi}_{d^n c}^{-1} \right)\circ G_{d^{n-1}c}=H\circ \left(\widehat{\pi}_{d^{n-1}c}\circ \widehat{\Phi}_{d^{n-1} c}^{-1}\right).
\]
Therefore, it follows from (\ref{lift}) that for each $n\geq 1$, $H:\Omega_{d^{n-1}c}' \rightarrow \Omega_{d^n c}'$ lifts to 
\[
\tilde{H}_{d^{n-1}c}: \mathbb{C}\times \mathcal{A}_{d^{n-1}c} \rightarrow  \mathbb{C}\times\mathcal{A}_{d^n c}
\]
which has the form 
\[
(z,\zeta)\mapsto \left(\frac{a}{d}z + Q(\zeta), \zeta^d\right).
\]

The following chain of commutative diagrams illustrates the various spaces and the maps between them.

\[\begin{tikzcd}
\mathbb{C}\times \mathcal{A}_c \arrow{r}{G_c}   \arrow[swap]{d}{\hat{\Phi}_c^{-1}} & \mathbb{C}\times \mathcal{A}_{dc} \arrow{r}{G_{dc}} \arrow{d}{\hat{\Phi}_{dc}^{-1}} &  \mathbb{C}\times \mathcal{A}_{d^2c} \arrow{r} {G_{d^2c}}  \arrow{d}{\hat{\Phi}_{d^2c}^{-1}}& \cdots\\
\hat{\Omega}_c' \arrow{r}{\hat{H}_c}\arrow[swap]{d}{\hat{\pi}_c} & \hat{\Omega}_{dc}'  \arrow{r}{\hat{H}_{dc}}  \arrow{d}{\hat{\pi}_{dc}} &  \hat{\Omega}_{d^2c}'  \arrow{r} {\tilde{H}_{d^2c}} \arrow{d}{\hat{\pi}_{d^2c}} & \cdots\\
\Omega_c' \arrow{r}{H} & \Omega_{dc}' \arrow{r} {H} & \Omega_{d^2c}'  \arrow{r}{H} & \cdots  \\
\end{tikzcd}
\]
Note that for each $m\geq 0$ and for each element $\left[{k}/{d^n}\right]\in \mathbb{Z}[1/d]/ \mathbb{Z}$, there exists a deck transformation $\gamma_{\frac{k}{d^n}}^{(m)}$ of $\mathbb{C}\times \mathcal{A}_{d^m c}$ for the covering map $\hat{\pi}_c \circ \hat{\Phi}_c^{-1}$.  Our goal is to identify these deck transformations. Further note that for each $m\geq 0$,  $\gamma_{1}^{(m)}\equiv \rm{Id}$ on  $\mathbb{C}\times \mathcal{A}_{d^m c}$.

\medskip 
\no 
{\it Claim:} For each $m, n\geq 0$,
\begin{equation}\label{comm cover}
\gamma_{\frac{1}{d^n}}^{(m+1)} \circ \tilde{H}_{d^m c}(z,\zeta) = \tilde{H}_{d^m c} \circ \gamma_{\frac{1}{d^{n+1}}}^{(m)}(z,\zeta),
\end{equation}
for all $(z,\zeta)\in \mathbb{C}\times \mathcal{A}_{d^m c}$. In other words, the following diagram commutes.

\[\begin{tikzcd}
\mathbb{C}\times \mathcal{A}_{d^m c} \arrow{r}{{\tilde{H}_{d^mc}}} \arrow[swap]{d}{\gamma^{(m)}_{{1}/{d^{n+1}}}} & \mathbb{C}\times \mathcal{A}_{d^{m+1} c} \arrow{d}{\gamma^{(m+1)}_{\frac{1}{d^{n}}}}  \\
\mathbb{C}\times \mathcal{A}_{d^m c} \arrow{r}{\tilde{H}_{d^mc}}& \mathbb{C}\times \mathcal{A}_{d^{m+1} c}
\end{tikzcd}
\]
 Let $(z,\zeta)\in \mathbb{C} \times \mathcal{A}_{d^m c}$ and let $\gamma_{{1}/{d^{n+1}}}^{(m)}$ send $(z,\zeta)$ to $(\tilde{z},\tilde{\zeta})$. Then $\tilde{H}_{d^m c}\circ \gamma_{{1}/{d^{n+1}}}^{(m)}$ sends $(z,\zeta)$ to  $\tilde{H}_{d^m c}(\tilde{z},\tilde{\zeta})$. Further, since $\gamma_{{1}/{d^{n+1}}}^{(m)}$ is completely determined by its action on any arbitrary element, the same holds for the map $\tilde{H}_{d^m c}\circ \gamma_{{1}/{d^{n+1}}}^{(m)}$. Also note that $\gamma_{{1}/{d^n}}^{(m+1)}$ sends $\tilde{H}_{d^m c}(z,\zeta)$ to $\tilde{H}_{d^m c}(\tilde{z},\tilde{\zeta})$. Thus (\ref{comm cover}) holds.  
 
 \medskip 
 
Now we prove that 
\begin{equation}\label{form aut 0}
\gamma_{\frac{1}{d^n}}^{(m)}
\begin{bmatrix}
 z \\ \zeta
\end{bmatrix}
=\begin{bmatrix} 
z+ \frac{d}{a} \sum_{l=0}^{n-1} {\left(\frac{d}{a}\right)}^l\left(Q(\zeta^{d^l})-Q\left( {\left(e^{\frac{2\pi i}{d^n}}\zeta)\right)}^{d^l} \right)\right)\\
 e^{\frac{2\pi i}{d^n}} \zeta
 \end{bmatrix},
\end{equation} 
for each $m,n\geq 0$ and the proof follows inductively. Recall that for each $m\geq 0$,  $\gamma_{1}^{(m)}\equiv \rm{Id}$ on  $\mathbb{C}\times \mathcal{A}_{d^m c}$.  It follows from (\ref{comm cover}) that 
\begin{equation} \label{induc1}
\gamma_1^{(m+1)} \circ \tilde{H}_{d^m c}(z,\zeta) = \tilde{H}_{d^m c} \circ \gamma_{{1}/{d}}^{(m)}(z,\zeta)
\end{equation}
for all $(z,\zeta)\in \mathbb{C}\times \mathcal{A}_{d^m c}$ and for all $m\geq 0$. Let $\gamma_{{1}/{d}}^{(m)}(z,\zeta)=\left(z_1^{(m)},\zeta_1^{(m)}\right)$, then from (\ref{induc1}), we have 
\[
\left(\frac{a}{d}z + Q(\zeta), \zeta^d\right)=\left(\frac{a}{d}z_1^{(m)} + Q\left(\zeta_1^{(m)}\right), {\left(\zeta_1^{(m)}\right)}^d\right).
\]
Comparing both sides of the above equation, we have 
\[
\zeta_1^{(m)}=e^{\frac{2\pi i}{d}}\zeta
\]
and 
\[
z_1^{(m)}=z+\frac{d}{a}Q(\zeta)-\frac{d}{a}Q\left(e^{\frac{2\pi i}{d}}\zeta\right).
\]
Thus (\ref{form aut 0}) holds for $n=1$. Now let (\ref{form aut 0}) holds for some $n\geq 1$, we prove that the same holds for $n+1$. Let 
\begin{equation*}
\gamma_{\frac{1}{d^n}}^{(m)}
\begin{bmatrix}
 z \\ \zeta
\end{bmatrix}
=\begin{bmatrix} 
z+ \frac{d}{a} \sum_{l=0}^{n-1} {\left(\frac{d}{a}\right)}^l\left(Q(\zeta^{d^l})-Q\left( {\left(e^{\frac{2\pi i}{d^n}}\zeta)\right)}^{d^l} \right)\right)\\
 e^{\frac{2\pi i}{d^n}} \zeta
 \end{bmatrix}
\end{equation*}
for all $m\geq 0$. Then (\ref{comm cover}) gives
\begin{equation*}
\gamma_{\frac{1}{d^n}}^{(m+1)} \circ \tilde{H}_{d^m c}(z,\zeta) = \tilde{H}_{d^m c} \circ \gamma_{\frac{1}{d^{n+1}}}^{(m)}(z,\zeta),
\end{equation*}
for all $(z,\zeta)\in \mathbb{C}\times \mathcal{A}_{d^m c}$. Once we use the explicit expressions of 
$ \tilde{H}_{d^m c}$ and $\gamma_{\frac{1}{d^n}}^{(m+1)}$, as before we can extract the expression of $\gamma_{\frac{1}{d^{n+1}}}^{(m)}$ which turns out to be 
\begin{equation*}
\gamma_{\frac{1}{d^{n+1}}}^{(m)}
\begin{bmatrix}
 z \\ \zeta
\end{bmatrix}
=\begin{bmatrix} 
z+ \frac{d}{a} \sum_{l=0}^{n} {\left(\frac{d}{a}\right)}^l\left(Q(\zeta^{d^l})-Q\left( {\left(e^{\frac{2\pi i}{d^{n+1}}}\zeta)\right)}^{d^l} \right)\right)\\
 e^{\frac{2\pi i}{d^{n+1}}} \zeta
 \end{bmatrix}.
\end{equation*} 
Thus we prove (\ref{form aut 0}). 

Next we prove 
\begin{equation}\label{form aut}
\gamma_{\frac{k}{d^n}}^{(m)}
\begin{bmatrix}
 z \\ \zeta
\end{bmatrix}
=\begin{bmatrix} 
 z+ \frac{d}{a} \sum_{l=0}^{n-1} {\left(\frac{d}{a}\right)}^l\left(Q(\zeta^{d^l})-Q\left( {\left(e^{\frac{2 k\pi i}{d^n}}\zeta)\right)}^{d^l} \right)\right)\\
 e^{\frac{2 k\pi i}{d^n}}\zeta 
 \end{bmatrix},
\end{equation}
for each $m,n\geq 0$ and $k\geq 1$.  Note that we just proved that (\ref{form aut}) is true for $k=1$. Let us assume that (\ref{form aut}) holds for some $k\geq 1$. Then 
\begin{align*}
\gamma_{\frac{k+1}{d^n}}^{(m)}\begin{bmatrix}
 z \\ \zeta
\end{bmatrix}=\left(\gamma_{\frac{1}{d^n}}^{(m)}\circ\gamma_{\frac{k}{d^n}}^{(m)}\right)
\begin{bmatrix}
 z \\ \zeta
\end{bmatrix}
=\gamma_{\frac{1}{d^n}}^{(m)}\begin{bmatrix} 
 z+ \frac{d}{a} \sum_{l=0}^{n-1} {\left(\frac{d}{a}\right)}^l\left(Q(\zeta^{d^l})-Q\left( {\left(e^{\frac{2 k\pi i}{d^n}}\zeta)\right)}^{d^l} \right)\right)\\
 e^{\frac{2 k\pi i}{d^n}}\zeta 
 \end{bmatrix}\\
=\begin{bmatrix} 
 z+ \frac{d}{a} \sum_{l=0}^{n-1} {\left(\frac{d}{a}\right)}^l\left(Q(\zeta^{d^l})-Q\left( {\left(e^{\frac{2 (k+1)\pi i}{d^n}}\zeta)\right)}^{d^l} \right)\right)\\
 e^{\frac{2 (k+1)\pi i}{d^n}}\zeta 
 \end{bmatrix} .
\end{align*}
Thus we prove that (\ref{form aut}) is true for all $k\geq 1$.

Therefore, if $\tilde{p}=(z,\zeta)\in \mathbb{C} \times  \mathcal{A}_{d^m c}$ is in the fiber of any point $p\in \Omega_{d^m c}'$, then the other points in the fiber  of $p$ are precisely of the form $\gamma_{{k}/{d^n}}^{(m)}(z,\zeta)$ for some $n\geq 0$ and $k\geq 1$. 

\medskip 
\no 
{\it Step 4:}  Let $(u,v)$ and $(u',v')$ be in the same fiber. Then clearly, 
$${(v'/v)}^{d^n}=1,$$
for some $n \ge 1$. 

\medskip 
\no 
{\it Case 1:} Suppose that 
\[
A(z,\zeta)=( \beta (\zeta) z+ \gamma(\zeta), \alpha \zeta)
\]
for $(z,\zeta)\in \mathbb{C}\times \mathcal{A}_c$. Using (\ref{form aut}), it follows that 
\[
u'=u+\frac{d}{a} \sum_{l=0}^{n-1} {\left(\frac{d}{a}\right)}^l\left(Q\left(v^{d^l}\right)-Q\left( {\left(e^{\frac{2 k\pi i}{d^n}}v\right)}^{d^l} \right)\right)
\]
and 
\[
v'=e^{\frac{2 k\pi i}{d^n} }v.
\]
Thus
\[
A_2(u',v')=\alpha v'=e^{\frac{2 k\pi i}{d^n} }(\alpha  v)=e^{\frac{2 k\pi i}{d^n} } A_2(u,v)
\]
and 
\begin{align}\label{A2uv}
A_1(u',v')-A_1(u,v)= 
\left(\beta(v')-\beta(v)\right)u + 
\beta(v')\frac{d}{a} \sum_{l=0}^{n-1} {\left(\frac{d}{a}\right)}^l\left(Q\left(v^{d^l}\right)-Q\left( {\left(e^{\frac{2 k\pi i}{d^n}}v\right)}^{d^l} \right)\right) \nonumber \\
+\gamma(v')-\gamma(v).
\end{align}
Now since $A(u,v)$ and $A(u',v')$ are in the same fiber, we have 
\[
A_1(u,v)-A_1(u',v')=\Delta (\alpha v, \alpha v').
\]
Consequently, it follows from (\ref{A2uv}) that $\beta(v)=\beta(v')$, or in other words, 
\[
\beta(v)=\beta \left(v e^\frac{2\pi i k}{d^n}\right),
\]
for all $k\geq 1$ and for all $n\geq 0$. Therefore, $\beta(v)\equiv \beta$ in $\mathbb{C}$. Thus, from (\ref{A2uv}), it follows that
\begin{equation}\label{alpha u}
\Delta (\alpha v, \alpha v')=\beta\frac{d}{a} \sum_{l=0}^{n-1} {\left(\frac{d}{a}\right)}^l\left(Q\left(v^{d^l}\right)-Q\left( {\left(e^{\frac{2 k\pi i}{d^n}}v\right)}^{d^l} \right)\right)+\gamma(v')-\gamma(v).
\end{equation}
Now 
\begin{equation}\label{alpha uv} 
A_1(u',v')-A_1(u,v)= \frac{d}{a} \sum_{l=0}^{n-1} {\left(\frac{d}{a}\right)}^l\left(Q\left(\alpha^{d^l} v^{d^l}\right)-Q\left( {\left(e^{\frac{2 k\pi i}{d^n}}\alpha v\right)}^{d^l} \right)\right).
\end{equation}
Combining (\ref{alpha u}) and (\ref{alpha uv}), we get that the modulus of 
\begin{align*}
&\beta\frac{d}{a} \sum_{l=0}^{n-1} {\left(\frac{d}{a}\right)}^l\left(Q\left(v^{d^l}\right)-Q\left( {\left(e^{\frac{2 k\pi i}{d^n}}v\right)}^{d^l} \right)\right)-\frac{d}{a} \sum_{l=0}^{n-1} {\left(\frac{d}{a}\right)}^l\left(Q\left(\alpha^{d^l} v^{d^l}\right)-Q\left( {\left(e^{\frac{2 k\pi i}{d^n}}\alpha v\right)}^{d^l} \right)\right)\\
&=
\frac{d}{a} \sum_{l=0}^{n-1} {\left(\frac{d}{a}\right)}^l\left[\beta Q\left(v^{d^l}\right)-Q\left(\alpha^{d^l} v^{d^l}\right)\right]-\frac{d}{a} \sum_{l=0}^{n-1} {\left(\frac{d}{a}\right)}^l\left[\beta Q\left( {\left(e^{\frac{2 k\pi i}{d^n}}v\right)}^{d^l} \right)- Q\left({\left(e^{\frac{2 k\pi i}{d^n}}\alpha v\right)}^{d^l} \right)\right]\\
&=
{\left(\frac{d}{a}\right)}^{n-1} \left[ \beta {v}^{d^{n-1}(d+1)}-{\left(\alpha v \right)}^{d^{n-1} (d+1)}-
  \beta {\left(e^{\frac{2 k\pi i}{d^n}}v \right)}^{d^{n-1}(d+1)}+{\left(\alpha  v  e^{\frac{2 k\pi i}{d^n}}\right)}^{d^{n-1}(d+1)}\right]
+ l.o.t.,
\end{align*}
is uniformly bounded by some fixed constant, for all $n\geq 0$ and  for any fixed $v$ with $1<\lvert v \rvert< e^c$. Therefore, 
\begin{equation*} \label{est beta}
\gamma(v)-\gamma(v')= {\left(\frac{d}{a}\right)}^{n-1} \left(\beta-\alpha^{d^{n-1}(d+1)}\right) v^{d^{n-1}(d+1)}\left( 1-e^{\frac{2k\pi i}{d}}\right)\left[1+ L_{n-1}(v)\right] 
\end{equation*}
is bounded, where $L_{n-1}$ is a Laurent polynomial of degree at most $d^n$. Now, we claim that
\begin{equation}\label{beta one}
a_n=\alpha^{d^{n}(d+1)} \rightarrow  \beta,
\end{equation}
as $n \rightarrow \infty$.  If not, there exists a sequence $n_k \rightarrow \infty$ such that 
\begin{equation*}\label{alphaa beta}
\left\lvert \alpha^{d^{n_k}(d+1)}- \beta\right\rvert >\delta>0,
\end{equation*}
for all $k\geq 1$. Thus, 
\begin{equation*} \label{alpha beta}
\left\lvert L_{n_k}(v)\right\rvert \leq \frac{(n_k+1)K_1}{{\lvert v \rvert}^{d^{n_k}}},
\end{equation*} 
 for some $K_1>1$, if $\lvert {d}/{a} \rvert\geq1$ and  
\begin{equation*} \label{alpha beta}
\lvert L_{n_k}(v)\rvert \leq \frac{(n_k+1)K_2}{{\lvert v \rvert}^{d^{n_k}}}{\left(\frac{a}{d}\right)}^{n_k},
\end{equation*} 
for some $K_2>1$, if $\lvert {d}/{a} \rvert <1$. This leads to a contradiction since $\gamma(v)-\gamma(v')$ does not remain bounded in either case. Thus, (\ref{beta one}) holds. Also, 
${a_{n+1}}/{a_n} \rightarrow 1$ as $n\rightarrow \infty$. This shows that 
\begin{equation*}
\alpha^{(d+1)(d-1)d^{n}} \rightarrow  1,
\end{equation*}
as  $n\rightarrow \infty$. Thus, from (\ref{beta one}), it follows  that $\beta^{d-1}=1$. Now since ${\left(\alpha^{d+1}\right)}^{d^n} \rightarrow \beta$ and $\beta$ is a repelling fixed point for the map $z\mapsto z^d$, we get 
\[
\alpha^{d+1}=\beta.
\]
Therefore (\ref{est beta}) shows that $\gamma(v)=\gamma(v')$, which in turn gives $\gamma\equiv k$, for some $k\in \mathbb{C}$.

\medskip 
\no 
{\it Case 2:} Now let 
\begin{equation}\label{aut2}
A(z,\zeta)=( \beta (\zeta) z+ \gamma(\zeta), \alpha {e^c}/{\zeta}),
\end{equation}
 for $(z,\zeta)\in \mathbb{C}\times \mathcal{A}_c$. As in Case 1, we can show that $\beta(u)\equiv \beta$, for some constant $\beta\in \mathbb{C}$. Further,
we can show that if $(u,v)$ and $(u',v')$ are in the same fiber and $e^{c}<{\lvert v\rvert}^2<e^{2c}$, then the modulus of 
\begin{align*}
&\beta\frac{d}{a} \sum_{l=0}^{n-1} {\left(\frac{d}{a}\right)}^l\left(Q\left(v^{d^l}\right)-Q\left( {\left(e^{\frac{2 k\pi i}{d^n}}v\right)}^{d^l} \right)\right)-\frac{d}{a} \sum_{l=0}^{n-1} {\left(\frac{d}{a}\right)}^l\left(Q\left(\left({\frac{\alpha e^c}{v}}\right)^{d^l}\right)-Q\left( {\left(e^{\frac{2 k\pi i}{d^n}}\frac{\alpha e^c}{v}\right)}^{d^l} \right)\right)\\
&=
\beta {\left(\frac{d}{a}\right)}^{n-1} v^{d^{n-1}(d+1)}\left(1-e^{\frac{2k\pi i}{d}}\right)\left[1+L_{v}(n-1)+T_{c}(n-1)\right],
\end{align*}
 is uniformly bounded by some fixed constant for all $n\geq 0$. Here, $L_{v}(n-1)$ and $T_{c}(n-1)$ come from the first and second parts of the equation, and
\begin{equation*} 
 L_{v}(n-1) =O\left( \frac{n}{{\lvert v \rvert}^{d^{n-1}}} \right) \text{ and } T_{c}(n-1)={\left(\frac{\alpha e^c}{v^2}\right)}^{d^{n-1}(d+1)}+O \left(\frac{n}{{\lvert v\rvert}^{d^{n-1}}}\right),
\end{equation*}
if $\lvert {d}/{a} \rvert >1$ and
\begin{equation*} 
L_{v}(n-1)= O\left(\frac{n}{{\lvert v \rvert}^{d^{n-1}}}{\left(\frac{a}{d}\right)}^{n-1}\right) \text{ and } T_{c}(n-1)={\left(\frac{\alpha e^c}{v^2}\right)}^{d^{n-1}(d+1)}+O \left(\frac{n}{{\lvert v\rvert}^{d^{n-1}}} {\left(\frac{a}{d}\right)}^{n-1}\right),
\end{equation*}  if $\lvert {d}/{a} \rvert <1$. This implies $\beta=0$ and this is clearly not possible. Thus $A$ cannot be of the form (\ref{aut2}). 

That $A$ cannot be of the form (\ref{aut2}) can be shown in a rather direct way using the definition of the projection map $\tilde{\pi}_c=\widehat{\pi}_c\circ \hat{\Phi}_c^{-1}$ from $\mathbb{C}\times \mathcal{A}_c$ to $\Omega_c'$. The proof we present below  is inspired by a comment made by one of the referees. Let $(z,\zeta) \in \mathbb{C} \times \mathcal{A}_c$ and let $\tilde{\pi}_c(z,\zeta)\in \{G_H^+=r\}$ for some $0<r<c$. It follows from the definitions of $\widehat{\pi}_c$ and $\hat{\Phi}_c$ that if $\hat{\Phi}_c^{-1}(z,\zeta)=[z_\zeta,C_{z,\zeta}]$, then $z_\zeta\in \{G_H^+=r\}$. Now by definition
\[
(z,\zeta)=\hat{\Phi}_c\left([z_\zeta,C_{z,\zeta}]\right)= \left(\widehat{\psi}_c([z_\zeta,C_{z,\zeta}]), \widehat{\phi}_c^+([z
_\zeta,C_{z,\zeta}])\right)
\]
and since there exists some $n\in \mathbb{N}$ such that $H^n(z_\zeta)\in V_R^+$, we have 
\[
\widehat{\phi}^+_{d^n c}[H^n (z_\zeta), H^n \left(C_{z,\zeta}\right)]={\left( \widehat{\phi}^+_c [z_\zeta
,C_{z,\zeta}]\right)}^{d^n}=\phi^+ \circ H^n (z_\zeta).
\]
Thus since $G_H^+\equiv \log \lvert\phi^+ \rvert$ in $V_R^+$, we have 
$$
\left\lvert \widehat{\phi}^+_c [z_\zeta,C_{z,\zeta}]\right\rvert=e ^r.
$$
Note that since $\tilde{\pi}_c \circ A=a\circ \tilde{\pi}_c $ and we have $a\left(\{G_H^+=r\}\right)=\{G_H^+=r\}$  by Proposition \ref{Prop 1}, it follows that $a\circ \tilde{\pi}_c (z,\zeta)=\tilde{\pi}_c \circ A(z,\zeta)=\widehat{\pi}_c\circ \hat{\Phi}_c^{-1}\circ A (z,\zeta) \in \{G_H^+=r\}$. Let $\hat{\Phi}_c^{-1}\circ A (z,\zeta)=[z_\zeta',C_{z,\zeta}']$. Then as above we can show 
\[
A(z,\zeta)=\hat{\Phi}_c\left([z_\zeta',C_{z,\zeta}']\right)= \left(\widehat{\psi}_c([z_\zeta',C_{z,\zeta}']), \widehat{\phi}_c^+([z
_\zeta',C_{z,\zeta}'])\right)
\]
with $$
\left\lvert \widehat{\phi}^+_c [z_\zeta',C_{z,\zeta}']\right\rvert=e ^r.
$$
Thus since $A$ is of the form (\ref{aut2}), we have  $e^r={e^c}/{e^r}$. Thus $r={c}/{2}$. So if we start with any $r\neq {c}/{2}$, we get a contradiction.

\medskip 
 
Therefore, each $a\in \rm{Aut}(\Omega_c')$ lifts to some $A\in \rm{Aut}(\mathbb{C}\times \mathcal{A}_c)$ of the form
\begin{equation}\label{aut lift}
(z,\zeta)\mapsto (\beta z+\gamma, \alpha \zeta),
\end{equation}
for $(z,\zeta)\in \mathbb{C}\times \mathcal{A}_c$, where $\beta^{d-1}=1$ and $\alpha^{d+1}=\beta$.

\medskip 
\no 
{\it Step 5:} 
Let $a$ be an automorphism of $\Omega_c'$. Then $a$ lifts as an automorphism $A$ of $ \mathbb{C}\times \mathcal{A}_c$, which is of the form (\ref{aut lift}). We claim that $a$ has a unique lift.  If not, then let
\begin{equation*}
A'(z,\zeta)=(\beta' z+\gamma', \alpha' \zeta),
\end{equation*}
for all $(z,\zeta)\in \mathbb{C}\times \mathcal{A}_c$, with $\beta'^{d-1}=1$ and $\alpha'^{d+1}=\beta'$, be another lift of $a$. Thus,  $(\beta z+\gamma, \alpha \zeta)$ and $(\beta' z+\gamma', \alpha' \zeta)$ are in the same fiber, for all $(z,\zeta)\in \mathbb{C}\times \mathcal{A}_c$.  Therefore, 
$
\alpha' \zeta= e^{\frac{2\pi k i}{d^n}} \alpha \zeta$, for all $\zeta\in \mathcal{A}_c$, which implies  ${\left({\alpha'}/{\alpha}\right)}^{d^n}=1.
$
Also we have ${\left({\alpha'}/{\alpha}\right)}^{d^2-1}=1$. Thus, $\alpha'=\alpha$. On the other hand, (\ref{form aut}) shows that
 \begin{equation}\label{beta gamma}
 (\beta'-\beta) z+(\gamma'-\gamma)= \frac{d}{a}\sum_{l=0}^{n-1} {\left(\frac{d}{a}\right)}^l\left(Q\left(\left({\alpha \zeta}\right)^{d^l}\right)-Q\left( {\left(e^{\frac{2 k\pi i}{d^n}}\alpha\zeta\right)}^{d^l} \right)\right),
 \end{equation}
 for all $(z,\zeta)\in  \mathbb{C}\times \mathcal{A}_c$.  Since the right side of (\ref{beta gamma}) is a polynomial in  $\zeta$ without constant term, we must have $\beta'=\beta$ and 
$\gamma'=\gamma$.  This shows that $a$ has a unique lift. 
 
\medskip 
 
Now let  $A(z,\zeta)=(z+\gamma, \zeta)$ be any automorphism of $ \mathbb{C}\times \mathcal{A}_c$. Note that $\Omega_c'=\tilde{\pi}_c(\mathbb{C}\times \mathcal{A}_c)$, where $\tilde{\pi}_c=\widehat{\pi}_c\circ \hat{\Phi}_c^{-1}$. We define $a: \Omega_c' \rightarrow \Omega_c'$ as follows:
$$
a\left(\tilde{\pi}_c(z,\zeta)\right)=\tilde{\pi}_c( z+\gamma, \zeta),
$$
for all $(z,\zeta)\in \mathbb{C} \times \mathcal{A}_c$. First we check that $a$ is well-defined. Let $\tilde{\pi}_c(z,\zeta)=\tilde{\pi}_c(z',\zeta')$. Then
\begin{equation*}
\begin{bmatrix}
 z' \\ \zeta'
\end{bmatrix}
=\begin{bmatrix} 
 z+ \frac{d}{a} \sum_{l=0}^{n-1} {\left(\frac{d}{a}\right)}^l\left(Q(\zeta^{d^l})-Q\left( {\left(e^{\frac{2k\pi i}{d^n}}\zeta\right)}^{d^l} \right)\right)\\
 e^{\frac{2k\pi i}{d^n}}\zeta
 \end{bmatrix},
\end{equation*} 
which implies 
\begin{equation*}
\begin{bmatrix}
   z'+\gamma\\
   \zeta'
\end{bmatrix}
=\begin{bmatrix} 
  z+\gamma+ \frac{d}{a} \sum_{l=0}^{n-1} {\left(\frac{d}{a}\right)}^l\left(Q\left({(\zeta)}^{d^l}\right)-Q\left( {\left(e^{\frac{2k\pi i}{d^n}}\zeta\right)}^{d^l} \right)\right)\\
 e^{\frac{2k\pi i}{d^n}} \zeta 
 \end{bmatrix}.
\end{equation*}
By (\ref{form aut}), $( z+\gamma,  \zeta)$ and $( z+\gamma', \zeta)$ are in the same fiber, which proves that  $a$ is well-defined. Also, $a$ is clearly a bijection. Therefore, 
$\mathbb{C} \subseteq \rm{Aut}(\Omega_c')$ $\subseteq \mathbb{Z}_{d^2-1}\times \mathbb{C}$.

\section{Proof of Theorem \ref{bihol short} and Corollary \ref{cor short}}

Let $\phi : \Omega_{c_1} \rightarrow \Omega_{c_2}$ be a biholomorphism. By Corollary (\ref{Omega cd}), $\phi(K^+)=K^+$. Then, $\phi$ acts as a biholomorphism between the corresponding punctured Short $\mathbb{C}^2$'s $\Omega_{c_1}'$ and $\Omega_{c_2}'$. By Step 1, Section 6, we obtain $c_1=c_2 d^{\pm n}$, for some integer $n\ge 1$. Conversely, if $c_1=c_2 d^{\pm n}$, then $H^{\pm n}$ is a biholomorphism between $\Omega_{c_1}$ and $\Omega_{c_2}$. This completes the proof of 
Theorem \ref{bihol short}. 

\medskip 

For any $c>0$, the family $\left\{\Omega_{\ell} \right\}_{{c}/{d}< \ell < c}$ is a continuum of pairwise non-biholomorphic Short $\mathbb{C}^2$'s. Now by Proposition \ref{Prop 1}, 
$$
\rm{Aut}\left(\Omega_c\right)\subseteq  \rm{Aut}\left(\Omega_{\ell}\right)\subseteq  \rm{Aut}\left(\Omega_{c/d} \right)=\rm{Aut}\left(\Omega_c\right),
$$
for all ${c}/{d}<\ell < c$. This shows that each member of this family has the same automorphism group.

The following remark is motivated by a question asked by one of the referees.
\begin{rem}
Since $H$ is an automorphism of $\mathbb{C}^2$, $H^{-1}$ exists and it is easy to see that  $H^{-1}(x,y) = ({(p(x)-y)}/{a}, x)$.  As $G_H^+$, one can construct
    \[
    G^-_H:= \lim_{n \ra \infty} \frac{1}{d^n} \log^+ \Vert H^{-n} \Vert
    \]
   which is  continuous, non-negative plurisubharmonic function on $\mbb C^2$.  For any $c > 0$ the sub-level set $\{G_H^- < c\}$ is also a {\it Short}\;$\mbb C^2$.

Further, note that similar arguments as in the proof of Proposition \ref{Prop 1}, gives that if $\Omega_c$ is biholomorphic to $\Omega_{c'}^{-}=\{z \in \mathbb{C}^2: G_H^-(z)<c'\}$ for some $c,c'>0$, then the biholomorphism preserves the non-escaping sets, i.e., if $\phi: \Omega_{c} \to \Omega_{c'}^{-}$ is a biholomorphism then $\phi(K^+)=K^-.$ Thus, if $H$ is a hyperbolic H\'{e}non map that is not volume preserving, this situation will never arise (see \cite[Lemma 5.5] {BS1} and \cite[Theorem 5.6]{BS1}). Also using a similar set of arguments as in Step 1 of Section 6, one can prove that if $\Omega_{c'}^{-}$ is biholomorphic to $\Omega_{c}^{-}$, then $c'=d^{\pm n}c$,
for some $n\in \mathbb{Z}$.
\end{rem}

Below we give an explicit example of a volume preserving H\'{e}non map such that $\Om_c$ and $\Om_{c}^-$ is biholomorphic, for every $c>0.$
\begin{exam}
Let $H(x,y)=(y, y^2-x)$ and $\ti I(x,y)=(y, x)=\ti I^{-1}(x,y)$. Then
 \[H^{-1}(x,y)=\ti I^{-1} \circ H \circ \ti I(x,y)=(x^2-y,x).\]
 Since $\|H^n(x,y)\|=\|H^{-n} \circ \ti I(x,y)\|$ for every $n \ge 1$, $G_H^+(x,y) =G_H^- \circ \ti I(x,y).$ Thus, $\ti I (\Om_c) =\Om^-_c$ for every $c\ge 0$ and $\Om_c \cong \Om_c^-.$
\end{exam}


\bibliographystyle{amsplain}

\end{document}